\newcommand{\del}{\partial}
\newcommand{\bC}{{\mathbb C}}
\newcommand{\cA}{{\mathcal A}}
\newcommand{\cB}{{\mathcal B}}
\newcommand{\cC}{{\mathcal C}}
\newcommand{\cD}{{\mathcal D}}
\newcommand{\cE}{{\mathcal E}}
\newcommand{\cI}{{\mathcal I}}
\newcommand{\cL}{{\mathcal L}}
\newcommand{\cM}{{\mathcal M}}
\newcommand{\cO}{{\mathcal O}}
\newcommand{\cS}{{\mathcal S}}
\newcommand{\cX}{{\mathcal X}}
\newcommand{\ra}{\rightarrow}
\newcommand{\lra}{\longrightarrow}
\def\mm{\overline{\mathcal{M}}}
\def\ss{\overline{\mathcal{S}}}
\DeclareMathOperator{\Sym}{{Sym}}
\def\mm{\overline{\mathcal{M}}}
\def\ss{\overline{\mathcal{S}}}
\def\rr{\overline{\mathcal{R}}}
\def\thet{\overline{\Theta}_{\mathrm{null}}}
\def\PP{{\textbf P}}
\newtheorem{theorem}{Theorem}[section]
\newtheorem{lemma}[theorem]{Lemma}
\newtheorem{proposition}[theorem]{Proposition}
\newtheorem{corollary}[theorem]{Corollary}
\theoremstyle{definition}
\newtheorem{definition}[theorem]{Definition}
\newtheorem{remark}[theorem]{Remark}
\newtheorem{question}[theorem]{Question}
\begin{document}
%\today

\title{Szeg\H{o} kernels and  Scorza quartics on the moduli space of spin curves}
\author[G. Farkas]{Gavril Farkas}

\address{Farkas: Humboldt-Universit\"at zu Berlin, Institut F\"ur Mathematik,  Unter den Linden 6
\hfill \newline\texttt{}
 \indent 10099 Berlin, Germany} \email{{\tt farkas@math.hu-berlin.de}}
\thanks{}

\author[E. Izadi]{Elham Izadi}
\address{Izadi: Department of Mathematics, University of California, San Diego \hfill
\indent \newline\texttt{}
\indent La Jolla, CA 92093-0112, USA}
 \email{{\tt eizadi@math.ucsd.edu}}

%\thanks{}

%\subjclass{Primary 14H40; Secondary 14H10, 14K99}

%\begin{abstract}

%\end{abstract}

\maketitle

\begin{abstract}
We describe an extension at the level of the moduli space $\ss_g^+$ of stable spin curves of genus $g$ of the map associating to an ineffective spin structure its Scorza curve (equivalently, the vanishing locus of the Szeg\H{o} kernel). We compute the class of the Szeg\H{o}-Hodge bundle on $\ss_g^+$, then find a new interpretation, in terms of theta constants, of the Scorza quartic associated to an even spin structure. Our results  describe the superperiod map from the moduli space of supersymmetric curves in the neighborhood of the theta-null divisor and provide a lower bound for the slope of the movable cone of $\ss_g^+$.
\end{abstract}
\setcounter{tocdepth}{1}
\tableofcontents

\section*{Introduction}

To a smooth projective curve $C$ of genus $g$ and to an ineffective even spin structure (that is, a line bundle $\eta\in \mbox{Pic}^{g-1}(C)$ with $\eta^{\otimes 2}\cong \omega_C$  such that $h^0 (C, \eta) = 0$), following \cite{DK}, one can associate the \emph{Scorza correspondence} $S(C, \eta)$  defined as the following curve
\begin{equation}\label{eq:defSC}
S(C, \eta ) := \Bigl\{ (x,y)\in C\times C : H^0 \bigl(C, \eta (x-y)\bigr) \neq  0\Bigr\}.
\end{equation}
Note that $S(C,\eta)$ has a fixed point free involution interchanging the two factors whose quotient is an unramified double cover $q\colon S(C, \eta)\rightarrow T(C, \eta)$, where $T(C, \eta)$ can be viewed as a curve inside the symmetric product $C^{(2)}$.  We denote by  $\ss_g^+$ the moduli space of stable even spin curves of genus $g$ constructed by Cornalba \cite{Cor} and by $\rr_g$ the moduli space of stable Prym curves of genus $g$ parametrizing admissible double covers over stable curves of genus $g$, see \cite{Be,FL}. One can show via \cite{DK, FV} that for a general choice of $[C, \eta]\in \ss_g^+$ the curves $S(C, \eta)$ and $T(C, \eta)$ are smooth and their genera equal to  $g\bigl(S(C, \eta)\bigr)=1+3g(g-1)$ and $g\bigl(T(C, \eta)\bigr)=1+\frac{3}{2}g(g-1)$, respectively.  At the level of moduli spaces, one has the following commutative diagram of Deligne-Mumford stacks
\begin{equation}\label{eq:diagram_stacks}
\xymatrix{
\ss_g^{+} \ar@{.>}[rr]^{\xi} \ar@{.>}[rd]_{\mathfrak{Sc}} &  & \rr_{1+\frac{3}{2}g(g-1)} \ar[dl]^{\chi}&\\
&                         \mm_{1+3g(g-1)}}
\end{equation}
where $\mathfrak{Sc}\bigl([C, \eta]\bigr):=[S(C, \eta)]$ and $\xi\bigl([C, \eta]\bigr):=\bigl[S(C,\eta)\stackrel{q}\longrightarrow  T(C, \eta)\bigr]$, whereas $\chi$ associates to an admissible double cover the stable model of the source curve. Observe that the definition of the curves $S(C, \eta)$ and $T(C, \eta)$ breaks down when either $\eta$ becomes effective, that is, $h^0(C, \eta)\geq 2$, or when the curve $C$ becomes singular. A main goal of this paper is to extend in a modular way the definition of the Scorza map $\mathfrak{Sc}$ to pairs $(C, \eta)$ where $h^0 (C,\eta) =2$ and to the boundary of $\ss_g^+$.

\vskip 3pt

The correspondence $S(C, \eta)$ has been considered classically by Scorza \cite{Sc1, Sc} and used to prove what in modern terms amounts to say
that $\cS_3^+$ is birational to $\cM_3$. A modern exposition of Scorza's work was given by Dolgachev and Kanev \cite{DK}.   Scorza's results were then used by Mukai \cite{mukaiFano3} and Schreyer \cite{Sch} to construct new Fano threefolds.

\vskip 3pt

The Scorza curve $S(C, \eta)$ turns out to be intimately related to the Szeg\H{o} kernel $\mathrm{sz}_{\eta}$ considered in mathematical physics \cite{Fay, HS, TZ}, or in superstring theory \cite{DHP, Wi}. For a spin structure $\eta$ on $C$ with $H^0(C, \eta)=0$, we denote by $\theta_{\eta}$ the function on the Jacobian $JC:=\mbox{Pic}^0(C)$ whose zero locus is the translate of the theta divisor
\begin{equation}\label{eq:theta_eta}
\Theta_{\eta}:=\bigl\{\zeta\in JC: h^0(C, \zeta\otimes \eta)\neq 0\bigr\}.
\end{equation}
Then the \emph{Szeg\H{o} kernel} $\mathrm{sz}_{\eta}$ associated to the spin structure $[C, \eta]$ is the function on $C\times C$
\begin{equation}\label{eq:szego}
\mathrm{sz}_{\eta}(x,y):=\frac{\theta_{\eta}(x-y)}{\theta_{\eta}(0)}.
\end{equation} The Scorza curve $S(C,\eta)$ is the vanishing locus of the Szeg\H{o} kernel, see also (\ref{eq:scorza_diagr}) for details.\footnote{Occasionally, a slightly different definition of the Szeg\H{o} kernel, as an antisymmetric function, is put forward, e.g., \cite{Fay}. This amounts to dividing $\mathrm{sz}_{\eta}(x,y)$ by the prime form $E(x,y)$, which can be regarded as the unique section of the bundle $\cO_{C\times C}(\Delta)$. This does not influence the vanishing locus of $\mathrm{sz}_{\eta}$.}

\vskip 4pt

The Szeg\H{o} kernel is related to the moduli stack $\mathfrak{M}_g^+$ of even supersymmetric curves of genus $g$ considered in \cite{DHP, DW, Wi}. A point in $\mathfrak{M}_g^+$ corresponds to a super curve $\Sigma$ of genus $g$, which determines a smooth algebraic curve $C=\Sigma_{\mathrm{bos}}$ of genus $g$  and an even spin structure $\eta$ on $C$. As explained by D'Hoker and Phong \cite{DHP} and by Witten \cite[Section 8.3]{Wi}, one can define a \emph{super period map}
\begin{equation}\label{eq:sup_period}
P\colon \mathfrak{M}_g^+\dashrightarrow \cA_g
\end{equation}
obtained by integrating the sections of the Berezinian $\mathrm{Ber}(T_{\Sigma})$ of the cotangent bundle of $\Sigma$ using the Szeg\H{o} kernel $\mathrm{sz}_{\eta}$. The period map is undefined along the locus of those $[\Sigma]\in \mathfrak{M}_g$ where the associated bosonic quotient $C$ satisfies $H^0(C, \eta)\neq 0$. Some of our results can be interpreted as describing this period map $P$ when $H^0(C, \eta)\neq 0$.

\vskip 4pt

Inside the spin moduli space $\cS_g^+$ one considers the \emph{theta-null divisor}
$$\Theta_{\mathrm{null}}:=\Bigl\{[C, \eta]\in \cS_g^+: H^0(C, \eta)\neq 0\Bigr\}.$$
It is known \cite{T2} that $\Theta_{\mathrm{null}}$ is an irreducible divisor and its general point corresponds to a spin structure $\eta$ with $h^0(C, \eta)=2$. One has furthermore boundary divisors  $A_i$ and $B_i$ on $\ss_g^+$ and their corresponding divisor classes $\alpha_i=[A_i]$ and $\beta_i=[B_i] \in \mbox{Pic}(\ss_g^+)$ for $i=0, \ldots, \bigl\lfloor \frac{g}{2}\bigr\rfloor$.
The moduli space $\ss_g^+$ being a normal variety, the  rational map $\mathfrak{Sc}\colon \ss_g^+\dashrightarrow \mm_{1+3g(g-1)}$ can be defined along the general point of each divisor in $\ss_g^+$ even though the definition (\ref{eq:defSC}) makes no sense when $h^0(C, \eta)\geq 2$, or when $C$ becomes singular. A priori, this extension need not be modular.

\vskip 4pt

Our first result describes the limiting  Scorza correspondence $\mathfrak{Sc}([C, \eta])\in \mm_{1+3g(g-1)}$,  for a general point $[C, \eta]\in \Theta_{\mathrm{null}}$ corresponding to a pencil $C\stackrel{|\eta|}\rightarrow \PP^1$. Let $\widetilde{C}_{\eta}\longrightarrow C
$ be the double cover branched over the ramification points $x_1, \ldots, x_{4g-4}$ of $|\eta|$.

\vskip 3pt

We introduce the \emph{trace curve} of the pencil $|\eta|$, that is,
$$
\widetilde{\Gamma}_{\eta}:=\Bigl\{(x,y)\in C\times C: H^0\bigl(C, \eta(-x-y)\bigr)\neq 0\Bigr\}.
$$
Observe that $\widetilde{\Gamma}_{\eta}$ intersects the diagonal $\Delta$ of $C\times C$ at the points $(x_1, x_1), \ldots, (x_{4g-4}, x_{4g-4})$. It is easy to show that $\widetilde{\Gamma}_{\eta}$ is a smooth curve of genus $(g-1)(3g-8)+1$, whereas $g(\widetilde{C}_{\eta})=4g-3$.  One has the following result, where we refer to Theorem \ref{propthetanull2limit} for a more precise statement.

\begin{theorem}\label{thm:theta_null4} The limiting Scorza correspondence $S(C, \eta)$ corresponding to  a general vanishing theta-null $[C, \eta]\in \Theta_{\mathrm{null}}$ is the transverse union of $\widetilde{\Gamma}_{\eta}$ and the curve $\widetilde{C}_{\eta}$ meeting at the $(4g-4)$  diagonal points  $(x_1,x_1), \ldots, (x_{4g-4}, x_{4g-4})$.
\end{theorem}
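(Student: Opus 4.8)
The plan is to realize $S(C,\eta)$ as the preimage $\delta^{-1}(\Theta_{\eta})$ under the difference map $\delta\colon C\times C\to JC$, $\delta(x,y)=\cO_C(x-y)$, and to identify the flat limit $S_0$ of the smooth curves $S(C_t,\eta_t)$ along a one–parameter family $[C_t,\eta_t]$ specializing to a general $[C,\eta]=[C_0,\eta_0]\in\Theta_{\mathrm{null}}$, with $h^0(C_t,\eta_t)=0$ for $t\neq0$. A first bookkeeping step fixes the numerology: writing $F_1=\{pt\}\times C$ and $F_2=C\times\{pt\}$ for the two ruling classes and $\Delta$ for the diagonal, one gets $S(C,\eta)\cdot F_i=g$ (the unique divisor in $|\eta+x|$ has degree $g$) and $S(C,\eta)\cdot\Delta=0$ (since $0\notin\Theta_{\eta}$ for ineffective $\eta$), whence $[S(C,\eta)]=(g-1)(F_1+F_2)+\Delta$ and adjunction returns $p_a=1+3g(g-1)$. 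The flat limit preserves this class and arithmetic genus, so it suffices to exhibit $\widetilde{\Gamma}_{\eta}$ and $\widetilde{C}_{\eta}$ inside $S_0$ with the correct multiplicities and then match classes.

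The key structural input is that $\eta_0$ effective forces $C-C\subset\Theta_{\eta_0}$, because $\cO_C(\eta_0+x-y)$ is effective for \emph{every} $(x,y)$; hence $\theta_{\eta_0}\circ\delta$ vanishes identically on $C\times C$. Consequently $\theta_{\eta_t}\circ\delta=t\,(\dot\theta\circ\delta)+O(t^2)$, where $\dot\theta:=\frac{d}{dt}\big|_{t=0}\theta_{\eta_t}$ is the first variation of the theta function as the spin curve $[C_t,\eta_t]$ leaves $\Theta_{\mathrm{null}}$, and the flat limit is cut out by this first variation:
\begin{equation*}
S_0=Z(\dot\theta\circ\delta).
\end{equation*}
Off the diagonal the analysis is then direct: for $x\neq y$ lying on a common fibre of the pencil $|\eta_0|$ one produces deformations $(x_t,y_t)\in S(C_t,\eta_t)$ converging to $(x,y)$, so that the trace curve $\widetilde{\Gamma}_{\eta}$ is contained in $S_0$; since $[\widetilde{\Gamma}_{\eta}]=(g-1)(F_1+F_2)-\Delta$, the residual class is exactly $2\Delta$, all of which must be accounted for along $\Delta$.

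I expect the genuine difficulty to lie along the diagonal, where the set–theoretic limit collapses and a transverse local analysis is unavoidable. Writing $\delta(x,x+v)=-v\,\kappa(x)+O(v^2)$ with $\kappa$ the canonical frame, the tangent cone of $\Theta_{\eta_0}$ at $0$ is the Riemann–Kempf quadric $Q=\bigcup_{p\in\bP^1}\bigl\langle\phi_{\omega}(f^{-1}(p))\bigr\rangle$, where $\phi_{\omega}\colon C\to\bP^{g-1}$ is the canonical map; crucially $Q$ \emph{contains} $\phi_{\omega}(C)$, so the leading quadratic term vanishes along $\Delta$ and one must retain both the $t$–expansion above and the second–order term of $\delta$. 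The point to prove is that after the rescaling $v=\sqrt{t}\,w$ the limiting equation along $\Delta$ acquires the shape $w^2=c\cdot W(x)$, where $W\in H^0(C,\omega_C^{\otimes2})$ is the Wronskian of the pencil $|\eta_0|$, whose divisor $\mathrm{div}(W)$ is precisely the ramification $R=x_1+\cdots+x_{4g-4}$ (note $\deg\omega_C^{\otimes2}=4g-4$). This exhibits the diagonal component as the double cover $\widetilde{C}_{\eta}\to C\cong\Delta$ branched at $x_1,\dots,x_{4g-4}$, appearing with multiplicity one and class $2\Delta$, the two sheets coming together exactly over the zeros of $W$, i.e. at the points $(x_i,x_i)$.

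It remains to glue the two pieces. The branch points $(x_i,x_i)$ of $\widetilde{C}_{\eta}$ coincide with the $4g-4$ points $\widetilde{\Gamma}_{\eta}\cap\Delta$ singled out in the statement, and a local computation at each such point shows that the branch of $\widetilde{\Gamma}_{\eta}$ and the two sheets of $\widetilde{C}_{\eta}$ meet transversally, producing honest nodes. Finally $[\widetilde{\Gamma}_{\eta}]+2\Delta=(g-1)(F_1+F_2)+\Delta=[S_0]$, and $p_a(\widetilde{\Gamma}_{\eta}\cup\widetilde{C}_{\eta})=\bigl((g-1)(3g-8)+1\bigr)+(4g-3)+(4g-4)-1=1+3g(g-1)$ match the flat–limit invariants; this rules out embedded points or further components and forces $S_0=\widetilde{\Gamma}_{\eta}\cup\widetilde{C}_{\eta}$ to be exactly the asserted transverse union. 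The scheme–theoretic refinement is then recorded in Theorem \ref{propthetanull2limit}.
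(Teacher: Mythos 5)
Your route is genuinely different from the paper's. The paper works on the symmetric product: by Proposition \ref{lemeta02} one has $h^0\bigl(C^{(2)},\cL_{\eta}(\delta)\bigr)=1$ even when $h^0(C,\eta)=2$, and since $\Gamma_{\eta}+\overline{\Delta}$ visibly lies in $\bigl|\cL_{\eta}(\delta)\bigr|$ it \emph{is} the limit $T(C,\eta)$; pulling back to $C\times C$ gives the non-reduced flat limit $\widetilde{\Gamma}_{\eta}+2\Delta$, and an order-two base change plus normalization (stable reduction) replaces the double diagonal by the branched cover $\widetilde{C}_{\eta}$. Your $v=\sqrt{t}\,w$ rescaling is exactly that stable reduction, and your class and genus bookkeeping is correct, so the overall picture is right; but note that your sentence ``$S_0=\widetilde{\Gamma}_{\eta}\cup\widetilde{C}_{\eta}$'' conflates the flat limit in $C\times C$ (which is the non-reduced scheme $\widetilde{\Gamma}_{\eta}+2\Delta$) with the stable limit in $\mm_{1+3g(g-1)}$, where $\widetilde{C}_{\eta}$ actually lives.

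The genuine gaps are exactly the two steps you lean on. First, the containment $\widetilde{\Gamma}_{\eta}\subseteq S_0$: you say one ``produces deformations $(x_t,y_t)\in S(C_t,\eta_t)$ converging to $(x,y)$,'' but for $(x,y)\in\widetilde{\Gamma}_{\eta}$ with $x\neq y$ one has $h^0\bigl(C,\eta_0(x-y)\bigr)=1$, exactly as for \emph{every} off-diagonal pair (since $|\eta_0|$ is base-point free, $|\eta_0(x)|=|\eta_0|+x$ and $y$ is never a base point), so semicontinuity sees nothing special about the trace curve and no such deformation is exhibited. Second, the normal form $w^2=c\cdot W(x)$ with $\mathrm{div}(W)=\mathfrak{Ram}$ is flagged by you as ``the point to prove'' and then not proved; it presupposes $\dot\theta(0)=0$, i.e.\ that the theta constant vanishes to order \emph{two} along $\Theta_{\mathrm{null}}$ --- true, via the heat equation and the fact that the Riemann--Kempf tangent cone contains the canonical curve, but never invoked by you --- and without it the diagonal is not even contained in $Z(\dot\theta\circ\delta)$, so the whole analysis along $\Delta$ collapses. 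The efficient repair inside your framework is: once $2\Delta\subseteq Z(\dot\theta\circ\delta)$ is established, the residual symmetric divisor has class $(g-1)(F_1+F_2)-\Delta$ and $h^0\bigl(C^{(2)},\cL_{\eta}(-\delta)\bigr)=\dim\bigwedge^2 H^0(C,\eta)=1$ forces it to equal $\widetilde{\Gamma}_{\eta}$ --- which is precisely the linear-algebra uniqueness statement the paper takes as its starting point.
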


\vskip 4pt

Note that the curve $S(C, \eta)$ described in Theorem \ref{thm:theta_null4} is stable.  The case $g=3$ of Theorem \ref{thm:theta_null4} has been established before by Grushevsky and Salvati Manni \cite{GS}. In Sections \ref{sect:scorza_bdry} and \ref{sect:scorza_Bi} we determine the limiting Scorza curves $S(C, \eta)$ and $T(C, \eta)$ corresponding to a general point of each boundary divisor $A_i$ or $B_i$ of $\ss_g^+$. The most interesting situation occurs along the divisor $B_i$, where $i\geq 1$. We describe our result also in light of the connection pointed out by Donagi and Witten \cite[3.6]{DW} with the moduli space $\mathfrak{M}_{g,1}$ of marked super curves.

\vskip 3pt

A general point of the divisor $B_i$ corresponds to a transverse union $C\cup D$ of two smooth pointed curves $[C, p]$ and $[D, q]$ of genera $i$ and $g-i$ (where  the points $p$ and $q$ get identified) and to two \emph{odd} spin structures $\eta_C$ and $\eta_D$, respectively. We denote by $X_{\eta_C, p}$ the \emph{unique} $\mathbb{Z}_2$-invariant curve inside the linear system $\bigl|\eta_C(p)\boxtimes \eta_C(p)(\Delta)\bigr|$ on $C\times C$, and we have a similar definition for the curve $X_{\eta_D, q}\subseteq D\times D$. We then show that $X_{\eta_C,p}$ and $X_{\eta_D, q}$ are both nodal with a unique singularity at the point $(p,p)$, respectively, $(q,q)$. We denote by $X_{\eta_C,p}'$, respectively, by $X_{\eta_D, q}'$ their normalizations and let $\{p^+, p^-\}$ and $\{q^+, q^-\}$ be the inverse images of the two nodes in $X_{\eta_C,p}$ and $X_{\eta_D,q}$, respectively. The limit Scorza curve has then the following shape (see Theorem \ref{prop:limitBi} for the precise statement):

\begin{figure}[!h]
%\begin{center}

\vspace*{-27mm}
\setlength{\abovecaptionskip}{10pt minus 5pt}
\hspace{2cm}
{\centering

\includegraphics[width=12cm, height=10cm]{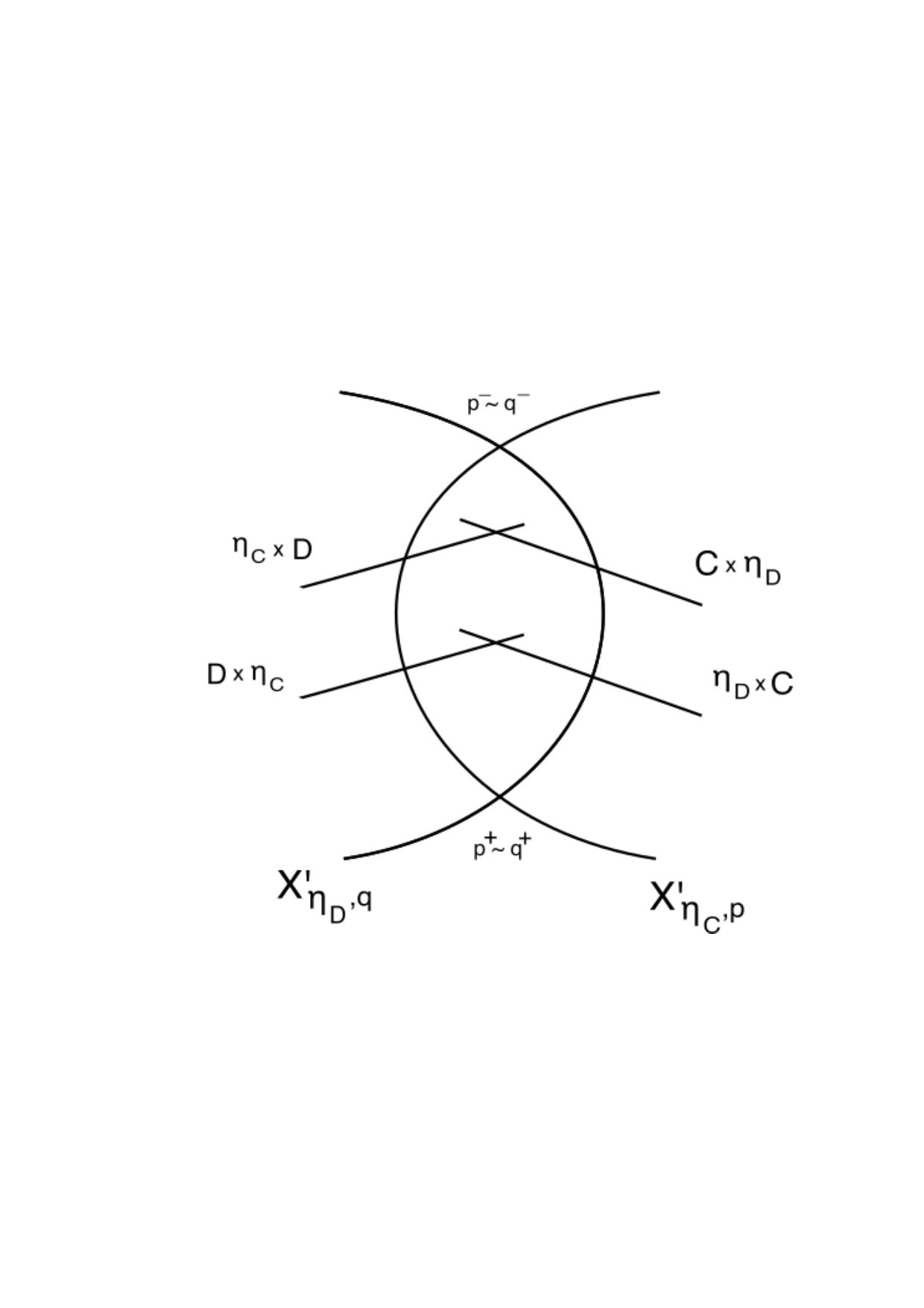}
\vspace*{-28mm}}
\caption{$X_{\eta_C,p}' \ \ \bigcup \ \ \Bigl((\eta_C \times D) \cup (C \times \eta_D)\Bigr) \
\bigcup \ \Bigl((D\times \eta_C)\cup (\eta_D \times C)\Bigr) \ \ \bigcup \ \ X_{\eta_D,p}'.$}
%\end{center}
\end{figure}
Note that $X_{\eta_C', p}$ meets transversally $2(g-i-1)$ copies of $D$ attached at the points in the support of $\eta_C$, and a similar statement holds for component $X_{\eta_D', q}$.

%We explain in Section \ref{sec:odd_scorza} how the assignment $[C, \eta,p]\mapsto \bigl[X_{\eta,p}\rightarrow Y_{\eta,p}\bigr]$ represents a morphisms of stacks $$\xi^-\colon \ss_{g,1}^-\dashrightarrow \rr_{\frac{g}{2}(3g+1): 2},$$ which can be regarded as an odd version of the Scorza map described in the diagram (\ref{eq:diagram_stacks}).

\subsection*{The Szeg\H{o}-Hodge class on $\ss_g^+$.}
The Scorza map $\mathfrak{Sc}\colon \ss_g^+ \dashrightarrow \mm_{1+3g(g-1)}$ having been extended as a morphism of \emph{stacks}  along every divisor in $\ss_g^+$, we can attack any problem related to divisor classes involving this map. It is natural to consider the pull-back under this map of the Hodge class $\lambda'\in \mbox{Pic}(\mm_{1+3g(g-1)})$.  We call this class
$$\lambda_{\mathrm{SzH}}:=\mathfrak{Sc}^*(\lambda')$$ the \emph{Szeg\H{o}-Hodge} class on the spin moduli space. We have the following result:

\begin{theorem}\label{thm:szego-hodge-class}
One has the following formula for the Szeg\H{o}-Hodge class on $\ss_g^+$:
$$\lambda_{\mathrm{SzH}}=\frac{77g-25}{4}\cdot\lambda-\frac{69g-21}{16}\cdot \alpha_0-\sum_{i=1}^{\lfloor \frac{g}{2}\rfloor} b_i\cdot \beta_i\in \mathrm{Pic}(\ss_g^+),$$
where $b_i\geq 0$, for $i=1, \ldots, \lfloor \frac{g}{2}\rfloor.$.
\end{theorem}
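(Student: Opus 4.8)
The plan is to compute $\lambda_{\mathrm{SzH}}=\mathfrak{Sc}^*(\lambda')$ by realizing the universal Scorza curve as a divisor in a relative surface over $\ss_g^+$ and then invoking Mumford's formula $\lambda'=\tfrac{1}{12}(\kappa_1+\delta')$ on $\mm_{1+3g(g-1)}$, so that
\[
12\,\lambda_{\mathrm{SzH}}=\mathfrak{Sc}^*\kappa_1+\mathfrak{Sc}^*\delta'.
\]
First I would set up the universal objects: let $p\colon \cC\to \ss_g^+$ be the universal spin curve, $\eta$ the universal theta characteristic with $\eta^{\otimes 2}=\omega_{\cC/\ss_g^+}$, and $f\colon Y:=\cC\times_{\ss_g^+}\cC\to \ss_g^+$ the relative product with projections $\pi_1,\pi_2$ and relative diagonal $\Delta$. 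Over the locus of ineffective spin structures the Scorza curve is the zero scheme of $\mathrm{sz}_{\eta}$, a section of $\pi_1^*\eta\otimes\pi_2^*\eta\otimes\cO_Y(\Delta)$; on a single fibre this recovers the class $(g-1)(F_1+F_2)+\Delta$ (with $F_1,F_2$ the two ruling classes), as forced by the genus $1+3g(g-1)$. I would therefore write
\[
[\cS]=\pi_1^*c_1(\eta)+\pi_2^*c_1(\eta)+[\Delta]+f^*\mathcal{N},
\]
and pin down the base-correction $\mathcal{N}\in\mathrm{Pic}(\ss_g^+)$ coming from the normalization by the theta-null $\theta_{\eta}(0)$, i.e.\ in terms of the class $[\thet]$, cross-checking against the explicit limit of Theorem \ref{thm:theta_null4}.

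The next step is $\mathfrak{Sc}^*\kappa_1$. By adjunction $\omega_{\cS/\ss_g^+}=\bigl(\omega_{Y/\ss_g^+}(\cS)\bigr)\big|_{\cS}$, hence
\[
\mathfrak{Sc}^*\kappa_1=\pi_*\bigl(c_1(\omega_{\cS/\ss_g^+})^2\bigr)=f_*\Bigl([\cS]\cdot\bigl(\omega_{Y/\ss_g^+}+[\cS]\bigr)^2\Bigr).
\]
Expanding on $Y$ and pushing forward fibrewise (integrating first over one $\cC$-factor, then applying $p_*$) reduces everything to tautological data on $\cC$: the class $c_1(\eta)$, the relative canonical $K=c_1(\omega_{\cC/\ss_g^+})$, the self-intersection $\Delta|_{\Delta}=-K$, and Mumford's relation $p_*(K^2)=12\lambda-\delta$ on the boundary-corrected spin moduli space. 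Using $2c_1(\eta)=K$ on the interior then expresses $\mathfrak{Sc}^*\kappa_1$ in the basis $\lambda,\alpha_0,\beta_i$; this is the source of the leading $\lambda$-coefficient and of the bulk of the $\alpha_0$-coefficient.

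For $\mathfrak{Sc}^*\delta'$ I would use the limiting descriptions. Since $\mathfrak{Sc}$ carries the interior of $\ss_g^+$ into the interior of $\mm_{1+3g(g-1)}$, the pullback $\mathfrak{Sc}^*\delta'$ is supported on $\thet$ and on the boundary divisors $A_i,B_i$, with multiplicity equal to the number of nodes of the corresponding limit Scorza curve. Theorem \ref{thm:theta_null4} gives a transverse union at $4g-4$ points, contributing $(4g-4)[\thet]$; substituting the known expression $[\thet]=\tfrac14\lambda-\tfrac1{16}\alpha_0-\cdots$ feeds back into the $\lambda$- and $\alpha_0$-coefficients, which I expect to assemble into the claimed $\tfrac{77g-25}{4}$ and $-\tfrac{69g-21}{16}$. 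The contributions of $A_i$ and of $B_i$ are then read off from the degenerations of Sections \ref{sect:scorza_bdry}--\ref{sect:scorza_Bi}.

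The main obstacle is the $\beta_i$-coefficient. The limit Scorza curve over a general point of $B_i$ is the intricate stable curve depicted above (Theorem \ref{prop:limitBi}), assembled from the normalizations $X_{\eta_C,p}'$ and $X_{\eta_D,q}'$ together with the $2(g-i-1)$ attached copies of $D$ and $C$, glued along a large number of nodes; disentangling its components and their intersection pattern in order to evaluate both $\mathfrak{Sc}^*\kappa_1$ and $\mathfrak{Sc}^*\delta'$ along $B_i$ is where the genuine work lies. Rather than extract the exact value of $b_i$, I would establish only nonnegativity: a test curve transverse to $B_i$ presents $b_i$ through the (effective) node-contribution to $\delta'$ together with the $\kappa_1$-term evaluated on the degenerate fibre, and one checks directly from the structure of that limit that the resulting expression has the stated sign. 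Pinning down this sign cleanly, rather than the precise value of $b_i$, is the honest limit of the method and the delicate point of the argument.
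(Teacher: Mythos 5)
Your route---realizing the universal Scorza curve as a divisor in $\cC\times_{\ss_g^+}\cC$ and expanding $12\,\lambda_{\mathrm{SzH}}=\mathfrak{Sc}^*\kappa_1+\mathfrak{Sc}^*\delta'$---is genuinely different from the paper's, which computes $c_1$ of $\mathfrak{Sc}^*(\mathbb E')$ directly by splitting the Hodge bundle of the Prym curve $S(C,\eta)\rightarrow T(C,\eta)$ into invariant and anti-invariant parts, expressing each via exact sequences of tautological bundles on the universal symmetric square, and then fixing the boundary coefficients with test curves. However, your version has a gap that blocks it at the very first coefficient: the assertion that $\mathfrak{Sc}$ carries the interior of $\ss_g^+$ into the interior of $\mm_{1+3g(g-1)}$, so that $\mathfrak{Sc}^*\delta'$ is supported only on $\thet$ and the boundary, is false. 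There is an interior divisor $\mathfrak{D}_{\mathrm{sg}}$ of \emph{ineffective} spin curves whose Scorza curve is nodal; it contributes the summand $\bigl[\overline{\mathfrak{D}}_{\mathrm{sg}}\bigr]$ to $\mathfrak{Sc}^*(\delta_0')$ (Proposition \ref{prop:pull_back}), and its class---in particular its $\lambda$-part, which enters $c_{\lambda}$ since $\mathrm{Pic}$ of the locus of smooth spin curves is generated by $\lambda$---is exactly what the paper declares unknown and defers to future work. So the identity $12\lambda_{\mathrm{SzH}}=\mathfrak{Sc}^*\kappa_1+\mathfrak{Sc}^*\delta'$ cannot be evaluated without a hard additional input your plan does not supply; the paper's detour through $c_1$ of the push-forward of the relative dualizing sheaf is precisely what avoids ever needing $\delta$ of the family.

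The treatment of $\thet$ is also off. The flat limit of $S(C,\eta)$ inside $C\times C$ over a general vanishing theta-null is the \emph{non-reduced} divisor $2\Delta+\widetilde{\Gamma}_{\eta}$; the stable curve of Theorem \ref{thm:theta_null4} only appears after a degree-two base change, blow-ups and normalization. Consequently $f_*\bigl([\cS]\cdot(\omega_{Y/\ss_g^+}+[\cS])^2\bigr)$ does not compute $\mathfrak{Sc}^*\kappa_1$ near $\thet$ (your divisor is not the family of stable models there), and the multiplicity of $\thet$ in $\mathfrak{Sc}^*\delta_0'$ is not the naive node count $4g-4$ but $12(g-1)$, cf.\ (\ref{eq:pull_back_thet}) and Proposition \ref{prop:pull_back}. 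Since $[\thet]$ restricts to $\frac14\lambda$ on the locus of smooth curves, both errors propagate directly into $c_{\lambda}$ and $c_{\alpha_0}$. By contrast, your closing argument for $b_i\geq 0$ is sound and is essentially the paper's: intersect with the test curve $G_i\subseteq B_i$ and use that $\lambda'$ has nonnegative degree on complete curves in $\mm_{1+3g(g-1)}$.
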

Note that the coefficients of the boundary classes $\beta_0$ and $\alpha_i$ for $i=1, \ldots, \bigl\lfloor \frac{g}{2}\bigr\rfloor$ are all equal to zero. Theorem \ref{thm:szego-hodge-class} can be compared to a number of similar results in the literature. In \cite{F1} one finds a computation of the  pull-back of the Hodge class under the rational map $\mm_g\dashrightarrow \mm_{g'}$, associating to a curve $C$ of odd genus $g$ the Brill-Noether curve $W^1_{\frac{g+3}{2}}(C)$ consisting of pencils of minimal degree on $C$. In \cite{KKZ, vdGK2} the pull-backs of Hodge classes under various Hurwitz-type correspondences are described.

\vskip 3pt

Theorem \ref{thm:szego-hodge-class} has  applications to the slope of the moduli space $\ss_g^+$. For a projective variety $X$, we denote by $\mathrm{Eff}(X)$ its cone of pseudo-effective divisors and by $\mathrm{Mov}(X)$ its subcone of movable divisors consisting of the classes of those effective $\mathbb Q$-divisors $D$  on $X$ whose stable base locus $\bigcap_{n\geq 1} \mathrm{Bs}|nD|$ has codimension at least $2$ in $X$. For a rational map $f\colon X\dashrightarrow Y$ between normal projective varieties, one has the  inclusion $f^*\bigl(\mathrm{Ample}(Y)\bigr)\subseteq \mathrm{Mov}(X)$.  For an effective divisor $D$ on $\ss_g^+$, we define its slope as the quantity
$$s(D):=\mbox{inf}\Bigl\{\frac{a}{b}\in \mathbb Q_{\geq 0}\cup\{\infty\}: a\cdot \lambda-b\cdot \alpha_0-[D]=\sum_{i=0}^{\lfloor \frac{g}{2} \rfloor}  (a_i\cdot \alpha_i+b_i\cdot \beta_i), \mbox{ where } a_i, b_i\geq 0\Bigr\}.$$
In analogy with the much studied case of the moduli space $\mm_g$, see \cite{CFM} and references therein, we define the \emph{slope}, respectively, the \emph{movable slope} of $\ss_g^+$ as
the following quantities
\begin{equation}\label{eq:slopes}
s\bigl(\ss_g^+\bigr):=\mathrm{inf}\Bigl\{s(D): D\in \mathrm{Eff}\bigl(\ss_g^+\bigr)\Bigr\} \  \mbox{ and }  \
s_{\mathrm{mov}}\bigl(\ss_g^+\bigr):=\mathrm{inf}\Bigl\{s(D):D\in \mathrm{Mov}(\ss_g^+)\Bigr\}.
\end{equation}

Since $s\bigl([\overline{\Theta}_{\mathrm{null}}]\bigr)=4$, see \cite[Theorem 0.2]{F2}, it follows that $s(\ss_g^+)\leq 4$. On the other hand, $\overline{\Theta}_{\mathrm{null}}$ is in general \emph{not} a movable class, as showed in \cite[Theorem 4.2]{FV1}. The class $\lambda_{\mathrm{SzH}}$ being obviously movable on $\ss_g^+$, we obtain the following result:

\begin{theorem}\label{thm:moving_slope}
For any $g\geq 3$, one has the estimate $$s_{\mathrm{mov}}\bigl(\ss_g^+\bigr)\leq 4+\frac{32g-16}{69g-21}.$$
\end{theorem}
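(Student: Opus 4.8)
The plan is to use the Szeg\H{o}-Hodge class itself as a test divisor inside the movable cone and to read off its slope directly from Theorem~\ref{thm:szego-hodge-class}. The first step is to record that $\lambda_{\mathrm{SzH}}=\mathfrak{Sc}^*(\lambda')$ is movable. The Hodge class $\lambda'$ on $\mm_{1+3g(g-1)}$ is semiample: a suitable multiple is the pull-back of the ample generator under the extension $\mm_{1+3g(g-1)}\to \overline{\cA}^{\,\mathrm{Sat}}_{1+3g(g-1)}$ of the Torelli map. Composing this morphism with $\mathfrak{Sc}$ and invoking the inclusion $f^*(\mathrm{Ample})\subseteq \mathrm{Mov}$ recorded above then gives $\lambda_{\mathrm{SzH}}\in \mathrm{Mov}(\ss_g^+)$; here one uses that $\mathfrak{Sc}$ has already been extended to a morphism of stacks along every divisor, so its indeterminacy locus has codimension at least two and the stable base locus of the pull-back is contained in it.

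Next I would compute $s(\lambda_{\mathrm{SzH}})$ from the explicit formula. Substituting $[D]=\lambda_{\mathrm{SzH}}$ into the defining relation $a\lambda-b\alpha_0-[D]=\sum_{i}(a_i\alpha_i+b_i\beta_i)$ and comparing coefficients, the $\lambda$-term forces $a=\frac{77g-25}{4}$ (since $\lambda$ is independent of the boundary classes in $\mathrm{Pic}(\ss_g^+)_{\QQ}$), while the $\alpha_0$-term forces $a_0=\frac{69g-21}{16}-b\geq 0$, that is $b\leq \frac{69g-21}{16}$. The crucial point is that the coefficients $b_i$ of $\beta_i$ produced by Theorem~\ref{thm:szego-hodge-class} are \emph{nonnegative}, so transposing $-\sum_i b_i\beta_i$ to the right-hand side yields a genuine effective boundary contribution and imposes no further constraint. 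Choosing $b=\frac{69g-21}{16}$ minimizes the ratio and produces the decomposition
$$\frac{77g-25}{4}\cdot\lambda-\frac{69g-21}{16}\cdot\alpha_0-\lambda_{\mathrm{SzH}}=\sum_{i=1}^{\lfloor \frac{g}{2}\rfloor} b_i\cdot\beta_i,$$
whence
$$s(\lambda_{\mathrm{SzH}})\leq \frac{(77g-25)/4}{(69g-21)/16}=\frac{4(77g-25)}{69g-21}=4+\frac{32g-16}{69g-21}.$$

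Since $\lambda_{\mathrm{SzH}}$ is movable, the definition of the movable slope gives $s_{\mathrm{mov}}(\ss_g^+)\leq s(\lambda_{\mathrm{SzH}})$, and the stated estimate follows at once. I expect the only genuinely delicate point to be the movability claim: one must ensure that $\lambda'$ is semiample rather than merely big and nef, and that the codimension-one extension of $\mathfrak{Sc}$ indeed forces the stable base locus of $\lambda_{\mathrm{SzH}}$ to have codimension at least two. Given Theorem~\ref{thm:szego-hodge-class}, the slope estimate itself is purely formal, the sign conditions $b_i\geq 0$ being exactly what guarantees that no boundary class has to be subtracted.
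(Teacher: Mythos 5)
Your proposal is correct and follows essentially the same route as the paper, which deduces the bound by observing that $\lambda_{\mathrm{SzH}}=\mathfrak{Sc}^*(\lambda')$ is movable and then reading off its slope from Theorem~\ref{thm:szego-hodge-class}, using that the $\beta_i$-coefficients are nonnegative so that only the $\lambda$- and $\alpha_0$-coefficients enter the ratio. Your justification of movability via the semiampleness of $\lambda'$ and the codimension of the indeterminacy locus of $\mathfrak{Sc}$ supplies exactly the detail the paper leaves implicit in the phrase ``being obviously movable.''
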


\begin{question}\label{question:slope}
Is it true that $s(\ss_g^+)=4$ for every $g\geq 3$? Is it true that
$$\mbox{liminf}_{g\rightarrow \infty} \ s\bigl(\ss_g^+\bigr)=\mbox{liminf}_{g\rightarrow \infty} \ s_{\mathrm{mov}}\bigl(\ss_g^+\bigr)?$$
\end{question}

For the implications in string theory of a positive answer to the first part of Question \ref{question:slope} (formulated as a question on $\mm_g$ rather than on $\ss_g^+$), we refer to \cite{MHNS}.

\subsection*{The Scorza quartic via Wirtinger duality.}
The second topic of this paper is an unconditional new construction, via Wirtinger duality, of the \emph{Scorza quartic hypersurface}
$$F(C, \eta)\in \mbox{Sym}^4 H^0(C, \omega_C)$$ associated to a general pair $[C, \eta]\in \cS_g^+$, where $g\geq 3$. This quartic has been introduced implicitly by Scorza in \cite{Sc} using the theory of apolarity and characterized by the remarkable property that its second polar with respect to pairs of points $(x,y)\in S(C, \eta)$ is a double hyperplane, that is, a quadric of rank one,  see (\ref{eq:def_prop}) for the precise statement. To this day, even the most basic properties of the quartic $F(C, \eta)$, like whether it is smooth for a general $[C, \eta]\in \cS_g^+$, or understanding the relation between them for various choices of spin structures remain wide open.

\vskip 4pt

Remarkably, in  genus $3$, the Scorza quartic defines a plane quartic, different in moduli from $C$, and the assignment
$[C, \eta]\mapsto [F(C,\eta)]$ induces a birational map $\cS_3^+\dashrightarrow \cM_3$, introduced by Scorza in \cite{Sc1}, \cite{Sc} and discussed in modern terms in \cite{DK}. The inverse map $\cM_3\dashrightarrow \cS_3^+$ associates to a plane quartic curve $X\subseteq \PP^2=\PP\bigl(H^0(X, \omega_X)^{\vee}\bigr)$ its \emph{covariant} plane quartic curve
$$C:=\Bigl\{a\in \PP\bigl(H^0(X, \omega_X)\bigr): \mbox{The polar } \ P_a(F) \mbox{ is isomorphic to the Fermat cubic}\Bigr\}.$$
Since $\mbox{dim } PGL(3)=\mbox{dim}\bigl|\cO_{\PP^2}(3)\bigr|-1$, being projectively equivalent to the Fermant cubic is a divisorial condition on the space of all cubics.
Note that $C$ is endowed with a symmetric correspondence $\Xi:=\bigl\{(a,b)\in C\times C: \mbox{rk } P_{a,b}(F)\leq 1\bigr\},$
and one can show that there exists a uniquely determined even spin structure $\eta$ on $C$ such that $\Xi\cong S(C,\eta)$. The assignment $[X]\mapsto [C,\eta]$ induces a rational morphism of stacks $\cM_3\dashrightarrow \cS_3^+$.

\vskip 4pt

We now explain our interpretation of Scorza's quartic in arbitary genus. Having fixed a spin curve $[C, \eta]$ with $h^0(C,\eta)=0$, let  $\phi_C\colon C\hookrightarrow \PP H^0(C, \omega_C)^{\vee}$ be the canonical embedding. For a pair $(x,y)\in S(C,\eta)$, the line bundles $\eta(x-y)$ and $\eta(y-x)$ are effective and there exists a canonical form $h_{x,y}\in H^0(C, \omega_C)$ whose divisor of zeros is
$$\mbox{div}(h_{x,y})=\mbox{div}\bigl(\eta(x-y)\bigr)+\mbox{div}\bigl(\eta(y-x)\bigr)\in C^{(2g-2)}.$$
We consider the polarization map, see \cite[1.1]{DK}
\begin{equation}\label{eq:pol}
P\colon \bigl(H^0(C, \omega_C)^{\vee}\bigr)^{\otimes 2}\otimes \mbox{Sym}^4 H^0(C,\omega_C)\longrightarrow \mbox{Sym}^2 H^0(C,\omega_C).
\end{equation}
For points $x, y\in C$, we write $P_{x,y}=P\bigl(\phi_C(x), \phi_C(y), \bullet\bigr)$, where $\phi_C(x)$ and $\phi_C(y)$ can be interpreted by evaluating canonical forms at $x$, respectively, $y$. Dolgachev and Kanev \cite[Theorem 9.3.1]{DK}, building on \cite{Sc} and assuming  several transversality properties that were ultimately verified generically in \cite{TZ}, showed that there exists a quartic $F(C,\eta)\in \mbox{Sym}^4 H^0(C,\omega_C)$, uniquely determined up to a constant, such that for any pair $(x,y)\in S(C,\eta)$, one has
\begin{equation}\label{eq:def_prop}
P_{x,y}\bigl(F(C,\eta)\bigr)=h_{x,y}^2\in \mbox{Sym}^2 H^0(C,\omega_C).
\end{equation}

\vskip 4pt

The construction of $F(C, \eta)$ remained puzzling, see, e.g., \cite[p. 218]{DK}, where the authors ask: \emph{What are these mysterious quartic hypersurfaces of Scorza?} Our aim is to shed light on this matter and present a simple unconditional  construction of the Scorza quartic in terms of theta functions on $JC$.

\vskip 4pt

We denote by $H^0(JC, 2\Theta_{\eta})_0$ the subspace of $H^0(JC, 2\Theta_{\eta})$ consisting of sections vanishing at the origin. To simplify notation, we set $\Theta:=\Theta_{\eta}$ for the symmetric theta divisor associated to $\eta$. Since $\eta$ is ineffective, the divisor $2\Theta\in \bigl|2\Theta\bigr|$ does not pass through the origin, in particular the restriction map to $\Theta$ induces an isomorphism
\begin{equation}\label{eq:res1}
\mathrm{res}\colon H^0(JC, 2\Theta)_0\stackrel{\cong}\longrightarrow H^0\bigl(\Theta, \cO_{\Theta}(2\Theta)\bigr).
\end{equation}

By taking cohomology in the exact sequence $$0\longrightarrow \cO_{JC}\longrightarrow \cO_{JC}(\Theta)\longrightarrow \cO_{\Theta}(\Theta)\longrightarrow 0,$$ we obtain the following identification
\begin{equation}\label{eq:ident2}
H^0\bigl(\Theta, \cO_{\Theta}(\Theta)\bigr)\cong H^1(JC, \cO_{JC})\cong H^1(C,\omega_C) \cong H^0(C, \omega_C)^{\vee},
\end{equation}
where the middle isomorphism is induced by the Abel-Jacobi map $C\rightarrow JC$.

\vskip 3pt

Furthermore, the difference map $\varphi\colon C\times C\rightarrow JC$ given by $\varphi(x,y):=\cO_C(x-y)$ contracts the diagonal $\Delta$ and factors through the blow-up $\widetilde{JC}$ of $JC$ at the origin $\cO_C\in JC$. Denoting by $\epsilon\colon \widetilde{JC}\rightarrow JC$ the blow-up map, we thus have a morphism $\tilde{\varphi}\colon C\times C\rightarrow \widetilde{JC}$ such that $\varphi=\epsilon\circ \tilde{\varphi}$, see also \cite{welters86}. If $E$ denotes the exceptional divisor on $\widetilde{JC}$, then $$\tilde{\varphi}^*\cO_{\widetilde{JC}}\bigl(\epsilon^*(2\Theta-E)\bigr)=\eta\boxtimes \eta,$$ cf. (\ref{eq:scorza_diagr}). Identifying
$H^0(JC, 2\Theta)_{0}$ with $H^0\bigl(\widetilde{JC}, \cO_{\widetilde{JC}}(2\Theta-E)\bigr)$, we  have a pull-back map
\begin{equation}\label{eq:ident3}
\varphi^* \colon H^0(JC, 2\Theta)_0\longrightarrow \mbox{Sym}^2 H^0(C, \omega_C).
\end{equation}
The map $\varphi^*$ assigns to a divisor $D\in |2\Theta|$ passing through $0\in JC$, its quadratic tangent cone at that point. With this preparation, we state our main result on the Scorza quartic.

\begin{theorem}\label{thm:scorza_quartic}
For a general even spin curve $[C, \eta]\in \cS_g^+$ of genus $g\geq 3$, the Scorza quartic $F(C,\eta)\colon \mathrm{Sym}^2 H^0(C,\omega_C)^{\vee}\longrightarrow  \mathrm{Sym}^2 H^0(C, \omega_C)$ can be realized via the following commutative diagram:

%\begin{equation}\label{eq:diagram_quartic}
\xymatrix{
&  \mathrm{Sym}^2 H^0(C, \omega_C)^{\vee}\cong \mathrm{Sym}^2 H^0(\cO_{\Theta}(\Theta)) \ar[rr]^{}\ar[d]_{F(C, \eta)} &  & H^0\bigl(\Theta, \cO_{\Theta}(2\Theta)\bigr) \ar[d]^{\mathrm{res}^{-1}}&\\
&     \mathrm{Sym}^2 H^0(C, \omega_C) & & \ar[ll]_{\varphi^*}        H^0(JC, 2\Theta)_0}
%\end{equation}
\end{theorem}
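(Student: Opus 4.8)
The plan is to show that the composite $\Phi:=\varphi^*\circ \mathrm{res}^{-1}\circ \mu$, where $\mu$ is the top multiplication map $\Sym^2 H^0(\cO_\Theta(\Theta))\to H^0(\cO_\Theta(2\Theta))$, coincides with $F(C,\eta)$ by verifying the Dolgachev--Kanev characterization (\ref{eq:def_prop}). Since $F(C,\eta)$ is, by \cite{DK}, the unique element (up to scalar) of $\Sym^4 H^0(C,\omega_C)$ satisfying $P_{x,y}(F(C,\eta))=h_{x,y}^2$ for all $(x,y)\in S(C,\eta)$, and since $\Phi$ is visibly a linear map $\Sym^2 H^0(C,\omega_C)^\vee\to \Sym^2 H^0(C,\omega_C)$, it suffices to prove that $\Phi(\phi_C(x)\cdot\phi_C(y))=h_{x,y}^2$ for every $(x,y)\in S(C,\eta)$, after fixing the universal constant. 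Here I use (\ref{eq:ident2}) to regard $\phi_C(x)=\mathrm{ev}_x\in H^0(C,\omega_C)^\vee\cong H^0(\Theta,\cO_\Theta(\Theta))$; the corresponding section of the normal bundle $\cO_\Theta(\Theta)$ is the restriction to $\Theta$ of the directional derivative $D_x\theta:=\nabla\theta\cdot \phi_C(x)$ obtained by translating $\Theta$ in the direction $\phi_C(x)$, since under the connecting map of $0\to \cO_{JC}\to\cO_{JC}(\Theta)\to \cO_\Theta(\Theta)\to 0$ the normal field of a translation corresponds to its tangent vector.

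First I would read off the upper arrows over $\Theta$. With this identification one has $\mu(\phi_C(x)\cdot\phi_C(y))=(D_x\theta\cdot D_y\theta)|_\Theta\in H^0(\Theta,\cO_\Theta(2\Theta))$, and $s_{x,y}:=\mathrm{res}^{-1}$ of this class is the unique section in $H^0(JC,2\Theta)_0$ restricting to it. Because $\Theta$ is symmetric and $\theta$ is even, each $D_x\theta$ is odd, so $s_{x,y}$ is even; vanishing at the origin it then vanishes there to order two, which is precisely the situation in which the bottom arrow $\varphi^*$ extracts a genuine quadratic tangent cone. Next I would transport the computation of $\varphi^*$ to $C\times C$. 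Using the standard pull-back formula $\varphi^*\cO_{JC}(\Theta)=(\eta\boxtimes \eta)(\Delta)$ coming from the blow-up description (so that $\varphi^*\theta$ is the Szeg\H{o} numerator cutting out $S(C,\eta)=\varphi^{-1}(\Theta)$), the section $\varphi^*s_{x,y}$ lies in $H^0\bigl((\omega_C\boxtimes \omega_C)(2\Delta)\bigr)$ and vanishes to order two along $\Delta$; dividing by the square of the canonical section of $\cO_{C\times C}(\Delta)$ realises $\varphi^*(s_{x,y})=Q_{x,y}\in \Sym^2 H^0(C,\omega_C)$, whose value at $(w,z)$ equals the pairing $\langle \Phi(\phi_C(x)\phi_C(y)),\phi_C(w)\phi_C(z)\rangle$.

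The decisive input is then the Gauss map. Restricting to $S(C,\eta)=\varphi^{-1}(\Theta)$ and using that the Gauss map of $\Theta$ at the smooth point $\cO_C(a-b)$ is exactly the canonical form $h_{a,b}$, I obtain $\varphi^*(D_x\theta)(a,b)=\nabla\theta(\cO_C(a-b))\cdot\phi_C(x)=h_{a,b}(x)$, and hence $Q_{x,y}(a,b)=h_{a,b}(x)\,h_{a,b}(y)$ for all $(a,b)\in S(C,\eta)$. Two steps remain. The clean one is a vanishing argument: any bicanonical form $R\in H^0(\omega_C\boxtimes\omega_C)$ vanishing on $S(C,\eta)\in |(\eta\boxtimes\eta)(\Delta)|$ must be zero, since the quotient would lie in $H^0\bigl((\eta\boxtimes\eta)(-\Delta)\bigr)\subseteq H^0(\eta\boxtimes\eta)=H^0(C,\eta)^{\otimes 2}=0$; thus an element of $\Sym^2 H^0(C,\omega_C)$ is determined by its restriction to $S(C,\eta)$, and it suffices to identify $Q_{x,y}$ with $h_{x,y}^2$ along $S(C,\eta)$.

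The hard part is exactly this last identification, namely the reciprocity $h_{a,b}(x)\,h_{a,b}(y)=h_{x,y}(a)\,h_{x,y}(b)$ for all $(x,y),(a,b)\in S(C,\eta)$. This is the Wirtinger duality of the title: it is equivalent to the self-adjointness of $\Phi$, i.e. to the assertion that its associated four-tensor lies in $\Sym^4 H^0(C,\omega_C)$ rather than merely in $\Sym^2 H^0(C,\omega_C)\otimes \Sym^2 H^0(C,\omega_C)$, and I expect to deduce it from the symmetry of second-order theta functions --- concretely, from the fundamental theta identity relating $\nabla\theta\otimes\nabla\theta$ evaluated at the two points $\cO_C(a-b)$ and $\cO_C(x-y)$ of $\Theta$. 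Granting this symmetry, the Gauss-map formula gives $Q_{x,y}(a,b)=h_{x,y}(a)\,h_{x,y}(b)$ on $S(C,\eta)$, the vanishing argument upgrades it to $Q_{x,y}=h_{x,y}^2$ in $\Sym^2 H^0(C,\omega_C)$, and the uniqueness in \cite{DK} yields $\Phi=F(C,\eta)$. The main obstacle is therefore establishing this theta reciprocity (equivalently, the self-adjointness of $\Phi$); the genericity of $[C,\eta]$ enters only through the transversality results of \cite{DK, TZ}, which guarantee the existence and uniqueness of $F(C,\eta)$ together with the smoothness of $\Theta$ along the image of $S(C,\eta)$ needed for the Gauss map.
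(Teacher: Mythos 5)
Your overall strategy is sound and the first half matches the paper: the identifications of $\phi_C(x)$ with $\partial_x\Theta\in H^0(\Theta,\cO_\Theta(\Theta))$, the description of $\mu$, and the injectivity of the restriction $\mathrm{Sym}^2H^0(C,\omega_C)\to H^0\bigl(\cO_{S(C,\eta)}\bigr)$ via $H^0(\eta\boxtimes\eta(-\Delta))=0$ are all correct. But there is a genuine gap at exactly the point you flag as "the hard part": the reciprocity $h_{a,b}(x)h_{a,b}(y)=h_{x,y}(a)h_{x,y}(b)$ for $(x,y),(a,b)\in S(C,\eta)$ is never proved, only "expected". This is not a technical loose end --- it \emph{is} the self-adjointness of $\Phi$, i.e.\ the assertion that the four-tensor lies in $\mathrm{Sym}^4H^0(C,\omega_C)$, which is the substantive content of the theorem; deferring it to "the symmetry of second-order theta functions" is circular unless you actually exhibit the identity. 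Moreover your Gauss-map formula $\varphi^*(D_x\theta)(a,b)=h_{a,b}(x)$ is only valid up to a scalar $c_{a,b}$ depending on $(a,b)$ (the form $h_{a,b}$ is pinned down only by its divisor), so the reciprocity you need is really $c_{a,b}^2\,h_{a,b}(x)h_{a,b}(y)=c_{x,y}^2\,h_{x,y}(a)h_{x,y}(b)$ with a coherent choice of normalizations --- a Fay-type identity that requires proof.

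The paper closes this gap by a different mechanism that never restricts to $S(C,\eta)$ or invokes the Gauss map. First, it produces an \emph{explicit} preimage under $\mathrm{res}$ of $\partial_{v_1}\theta\cdot\partial_{v_2}\theta$, namely $-\tfrac{1}{2}\partial_{v_1}\partial_{v_2}G(u,0)+a\cdot\theta^2$ where $G(u,v)=\theta(u+v)\theta(u-v)$; applying $\varphi^*$ (the quadratic Taylor term at $0$) to this expression gives a formula for $\beta(v_1,v_2,v_3,v_4)$ that is \emph{manifestly} symmetric, being the fourth derivative of $\tfrac{1}{2}\theta_2^2-\theta_0\theta_4$ (Theorem \ref{thm:thet_quartic}). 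Second, to match with the Dolgachev--Kanev characterization, it uses the Abel--Jacobi resolution $\rho\colon C^{(g-1)}\to\Theta$ to prove $(\Theta_{x-y}+\Theta_{y-x})\cdot\Theta=\partial_x\Theta+\partial_y\Theta$ (Lemma \ref{lemtxty}), so that $\mathrm{res}^{-1}\bigl(\mu(\phi_C(x)\phi_C(y))\bigr)=\Theta_{x-y}+\Theta_{y-x}$; for $(x,y)\in S(C,\eta)$ both translates pass through $0$ with the \emph{same} tangent hyperplane $\langle D_{x,y}\rangle$, so the tangent cone is the rank-one quadric $h_{x,y}^2$ and $P_{x,y}(F)=h_{x,y}^2$ follows. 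To repair your argument you would either need to prove your reciprocity directly (essentially Fay's identity with normalizations tracked), or replace your Gauss-map computation of $\varphi^*(s_{x,y})$ by an identification of $s_{x,y}$ with the concrete divisor $\Theta_{x-y}+\Theta_{y-x}$ as above, whose tangent cone at $0$ can be read off geometrically.
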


The upper horizontal arrow in this diagram corresponds to multiplication of sections and uses (\ref{eq:ident2}). In order to establish that the map $F(C,\eta)\colon \mbox{Sym}^2 H^0(C, \omega_C)^{\vee}\longrightarrow \mbox{Sym}^2 H^0(C,\omega_C)$  lies in $\mbox{Sym}^4 H^0(C, \omega_C)$
and it gives rise to the Scorza quartic from \cite{DK}, we will make essential use of the \emph{Wirtinger duality} asserting the existence of an isomorphism $$\mathfrak{w}\colon |2\Theta|\stackrel{\cong}\lra |2\Theta|^{\vee}$$ having the defining property that for any point $a\in JC$, if $\Theta_a:=a+\Theta$, then
$$\mathfrak{w}\bigl(\Theta_a+\Theta_{-a}\bigr)=\bigl\{D\in \bigl|2\Theta\bigr|: a\in D\bigr\}.$$
In the course of proving Theorem \ref{thm:scorza_quartic}, we find an explicit description of $F(C, \eta)$ in terms of theta constants. Assuming
$$\theta_{\eta}=\theta_0+\theta_2+\theta_4+\cdots,$$
is the Taylor expansion at the origin of the symmetric theta function $\theta_{\eta}$ in terms of the canonical flat coordinates on $JC$, we have the following result:

\begin{theorem}\label{thm:scorza_quartic2}
For a general spin curve $[C, \eta]\in \cS_g^{+}$, we have that
$$F(C,\eta)=\frac{\theta_2^2}{2}-\theta_0\cdot \theta_4\in \mathrm{Sym}^4 H^0(C, \omega_C).$$
The limiting Scorza quartic for a general point $[C, \eta]\in \Theta_{\mathrm{null}}$ equals twice the quadratic tangent cone to the theta divisor $\Theta$ at the origin $0\in JC$.
\end{theorem}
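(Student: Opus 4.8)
The plan is to evaluate the composite $F(C,\eta)=\varphi^{*}\circ\mathrm{res}^{-1}\circ(\text{multiplication})$ of Theorem~\ref{thm:scorza_quartic} explicitly in flat coordinates $z=(z_1,\dots,z_g)$ on $JC$, writing $\partial_i=\partial/\partial z_i$, and to recognize the resulting map $\Sym^2 H^0(C,\omega_C)^{\vee}\to \Sym^2 H^0(C,\omega_C)$ as the second polarization of an explicit quartic. First I would make the isomorphism (\ref{eq:ident2}) concrete: under $H^0(\Theta,\cO_\Theta(\Theta))\cong H^1(JC,\cO_{JC})\cong H^0(C,\omega_C)^{\vee}=T_0JC$, a tangent vector $v$ corresponds to $\partial_v\theta_\eta|_\Theta$, the restriction to $\Theta$ of the directional derivative of $\theta_\eta$ (this is exactly the connecting homomorphism applied to the normal derivative). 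Hence the multiplication arrow sends the basis element $\partial_j\partial_k\in \Sym^2 H^0(C,\omega_C)^{\vee}$ to the section $(\partial_j\theta_\eta\cdot\partial_k\theta_\eta)|_\Theta\in H^0(\Theta,\cO_\Theta(2\Theta))$, and it remains to compute its lift under $\mathrm{res}^{-1}$ and read off the quadratic tangent cone $\varphi^{*}$.

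The crux is the lift, and I expect it to be the main obstacle. Here I would use the Wirtinger family $\sigma_a(z):=\theta_\eta(z-a)\theta_\eta(z+a)$: the $a$-dependent exponential factors in the quasi-periodicity of $\theta_\eta(z\mp a)$ cancel, so $\sigma_a\in H^0(JC,2\Theta)$ for every $a$, with divisor $\Theta_a+\Theta_{-a}$, the family being algebraic in $a$. Differentiating twice at $a=0$ therefore produces a genuine holomorphic section
$$\partial_{a_j}\partial_{a_k}\sigma_a\big|_{a=0}=2\bigl(\theta_\eta\,\partial_j\partial_k\theta_\eta-\partial_j\theta_\eta\,\partial_k\theta_\eta\bigr)\in H^0(JC,2\Theta).$$
Restricting to $\Theta$, where $\theta_\eta=0$, yields $-2\,\partial_j\theta_\eta\,\partial_k\theta_\eta|_\Theta$, which matches the desired section up to scalar; subtracting the multiple $\theta_\eta(0)^{-1}\partial_j\partial_k\theta_\eta(0)\cdot\theta_\eta^2$ (which vanishes on $\Theta$) corrects the value at the origin to zero, producing the section $S_{jk}$ that is $\mathrm{res}^{-1}\bigl((\partial_j\theta_\eta\,\partial_k\theta_\eta)|_\Theta\bigr)$ up to the fixed scalar. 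The delicate points are recognizing that the parameter derivative of the Wirtinger sections supplies a \emph{holomorphic} lift, and bookkeeping the identifications and the correcting term so the normalization is exact; the ineffectivity $h^0(C,\eta)=0$, i.e.\ $\theta_0:=\theta_\eta(0)\neq0$, is precisely what makes both (\ref{eq:res1}) and this correction legitimate.

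It then remains to extract the quadratic part of $S_{jk}$ at the origin. Writing $\theta_\eta=\theta_0+\theta_2+\theta_4+\cdots$ with $\theta_{2m}$ homogeneous of degree $2m$, and $c_{jk}=\partial_j\partial_k\theta_\eta(0)$ (so $\partial_j\partial_k\theta_2=c_{jk}$), a direct expansion, using that odd-degree terms vanish by the evenness of $\theta_\eta$, shows that $S_{jk}$ has vanishing constant and linear parts and quadratic part
$$\theta_0\,\partial_j\partial_k\theta_4-\partial_j\theta_2\,\partial_k\theta_2-c_{jk}\,\theta_2=-\,\partial_j\partial_k\!\left(\tfrac12\theta_2^{\,2}-\theta_0\theta_4\right),$$
the last equality being the elementary identity $\partial_j\partial_k(\tfrac12\theta_2^{\,2})=\partial_j\theta_2\,\partial_k\theta_2+c_{jk}\theta_2$. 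Since $\mathrm{res}^{-1}\bigl((\partial_j\theta_\eta\,\partial_k\theta_\eta)|_\Theta\bigr)$ equals $-S_{jk}$ up to the fixed scalar, its quadratic tangent cone is $\partial_j\partial_k(\tfrac12\theta_2^{\,2}-\theta_0\theta_4)$; as $j,k$ vary this says the map $F(C,\eta)$ is the second polarization of $\tfrac12\theta_2^{\,2}-\theta_0\theta_4$. Because Theorem~\ref{thm:scorza_quartic} already guarantees, via Wirtinger duality $\mathfrak{w}$, that $F(C,\eta)$ is the second-polar map of a genuine element of $\Sym^4 H^0(C,\omega_C)$, this identifies $F(C,\eta)=\tfrac12\theta_2^{\,2}-\theta_0\theta_4$, the relative coefficient of $\theta_0\theta_4$ being forced by the computation.

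Finally, for the theta-null statement I would argue by specialization. The coefficients $\theta_0,\theta_2,\theta_4$ are jets of $\theta_\eta$ and vary holomorphically over $\cS_g^{+}$ near $\Theta_{\mathrm{null}}$; along $\Theta_{\mathrm{null}}$ one has $0\in\Theta$, i.e.\ $\theta_0=\theta_\eta(0)=0$, while $\theta_4$ stays finite, so the term $\theta_0\theta_4$ drops out and the formula specializes to $\tfrac12\theta_2^{\,2}$. By the Riemann singularity theorem the multiplicity of $\Theta$ at the origin is $h^0(C,\eta)=2$, so $\theta_2$ is exactly the quadratic tangent cone to $\Theta$ at $0$, and the quartic hypersurface $\{\theta_2^{\,2}=0\}=2\{\theta_2=0\}$ is twice that tangent cone. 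Identifying this specialization with the genuine \emph{limiting} Scorza quartic uses the continuity of the whole construction from the earlier part of the paper, namely the modular extension of $\mathfrak{Sc}$, which ensures that the limit of $F(C,\eta)$ along $\cS_g^{+}$ coincides with the limit of the theta-constant formula.
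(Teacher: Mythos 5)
Your proof of the main formula is correct and is essentially the paper's own argument: both start from the Wirtinger family $\theta(z+a)\theta(z-a)$, differentiate twice in the parameter $a$ at $a=0$ to obtain a holomorphic lift to $H^0(JC,2\Theta)$ of $-2\,\partial_j\theta\,\partial_k\theta|_{\Theta}$, correct by a multiple of $\theta^2$ so the section vanishes at the origin (here $\theta_0\neq 0$ is used exactly as you say), and then read off the quadratic tangent cone; the paper phrases the last step as computing the fully symmetric quadrilinear form $\beta(v_1,v_2,v_3,v_4)=\partial_{v_1}\partial_{v_2}\partial_{v_3}\partial_{v_4}\bigl(\tfrac12\theta_2^2-\theta_0\theta_4\bigr)$, which is the same computation as your extraction of the quadratic Taylor coefficient of $S_{jk}$. (Watch the factor of $2$ in your correcting term: to kill the value at the origin of $\partial_{a_j}\partial_{a_k}\sigma_a|_{a=0}$ you must subtract $2\theta_0^{-1}c_{jk}\,\theta^2$, not $\theta_0^{-1}c_{jk}\,\theta^2$; your subsequent quadratic-part identity is stated for the correctly normalized section, so this is only a bookkeeping slip.) For the theta-null statement you argue by specialization of the formula, using that $\theta_0\to 0$ while $\theta_2\neq 0$ by the Riemann singularity theorem; the paper instead shows that the map $\beta$ itself still makes sense intrinsically at a general point of $\Theta_{\mathrm{null}}$ (the point being that $\mathrm{res}$ is no longer an isomorphism, so $\partial_j\theta\cdot\partial_k\theta$ lifts only to a pencil in $|2\Theta|_0$ containing $2\Theta$, on which the tangent-cone map is constant because $\mathrm{mult}_0(2\Theta)=4$). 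Your limit argument suffices for the statement as phrased, since the limit $\tfrac12\theta_2^2$ is nonzero and hence path-independent; the paper's version buys a definition of the boundary quartic that does not depend on choosing a degenerating family.
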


\vskip 4pt

{\small{{\bf Acknowledgments:} We benefited from discussions with I. Dolgachev, R. Donagi, S. Grushevsky, A. Verra and F. Viviani related to this circle of ideas.

Farkas was supported by the Berlin Mathematics Research Center $\mathrm{MATH+}$ and by the ERC Advanced Grant SYZYGY. This project has received funding from the European Research Council (ERC) under the EU Horizon 2020  program (grant agreement No. 834172). Izadi was partially supported by the National Science Foundation.}}

\section{Scorza correspondences associated to spin structures}

We first collect basic facts that will be used throughout the paper.

\subsection{Moduli of spin and Prym curves.} We denote by  $\rr_g$ the moduli stack of stable Prym curves of genus $g$, see  \cite{Be, FL} for further background. Recall that an element of $\rr_g$ corresponds to a  triple $[X, \epsilon, \beta]$, where $X$ is a quasi-stable genus $g$ curve, $\epsilon$ is a locally free sheaf of total degree $0$ on $X$ such that $\epsilon_{|E}\cong \cO_E(1)$ for every smooth rational component $E\subseteq X$ with $|E\cap \overline{X\setminus E}|=2$ (such a component being called \emph{exceptional}) and $\beta\colon \epsilon^{\otimes 2}\rightarrow \cO_X$ is a sheaf morphism that is non-zero along each non-exceptional component of $X$. There exists a  finite map $\pi\colon \rr_g\rightarrow \mm_g$ assigning to a triple $[X, \eta, \beta]$  the stable model of the curve $X$. The ramification divisor
$\Delta_0^{\mathrm{ram}}$ of the map $\pi$ has as its general point a Prym curve  $[X, \epsilon, \beta]$, where $X:=C\cup_{\{x, y\}} E$ is a quasi-stable curve, with $C$ being a smooth curve of genus $g-1$ and $E$ being a smooth rational curve  meeting $C$ at two distinct points $x, y$ and  $\epsilon \in \mathrm{Pic}^0(X)$ satisfies $\epsilon_{|E}\cong \cO_{E}(1)$ and $\epsilon_{|C}^{\otimes 2}\cong \cO_C(-x-y)$.

\vskip 4pt

Let $\ss_g$ be the moduli space of stable spin curves of genus $g$. Depending on the parity of the spin structure, we distinguish two components $\ss_g^+$ and $\ss_g^-$ of $\ss_g$, whose geometry has been studied in detail in \cite{Cor, F2, FV}. We denote by $\mathfrak{M}_g$ the moduli space of smooth supersymmetric (susy) algebraic curves of genus $g$. It parametrizes smooth supervarieties of dimension $1|1$ endowed with a rank $0|1$ maximally non-integrable distribution of their tangent bundle. It is known \cite{DW, Wi, CV} that $\mathfrak{M}_g$ is a smooth Deligne-Mumford superstack of dimension $3g-3|2g-2$. The bosonic truncation of $\mathfrak{M}_g$ can be identified with the moduli stack $\cS_g$ of smooth spin curves of genus $g$ and we denote by $\mathfrak{M}_g^+$ the component of the susy moduli space such that $(\mathfrak{M}_g^+)_{\mathrm{bos}}=\cS_g^+$.

\vskip 3pt

\subsection{Tautological line bundles on symmetric squares of curves.} \label{subsecC_D} We set some notation. We fix a smooth algebraic curve $C$ of genus $g$ and denote  the two projections $\mathrm{pr}_i \colon  C\times C \lra C$, for $i=1,2$. We denote by $C^{(2)}$ the symmetric square of $C$ and let $q\colon C\times C\rightarrow C^{(2)}$ be the natural projection ramified along the diagonal $\Delta\subseteq C\times C$. Let $\delta\in \mbox{Pic}\bigl(C^{(2)}\bigr)$ be the line bundle uniquely characterized by the property $q^*(\delta)\cong \cO_{C^2}(\Delta)$. We also introduce the diagonal of the symmetric product
$$\overline{\Delta}=\bigl\{2\cdot p: p\in C\bigr\}\subseteq C^{(2)},$$
therefore $q^*(\overline{\Delta})=2\Delta$.

\vskip 4pt

For an effective divisor $D=p_1+\cdots+p_d$ on $C$ we introduce the divisor on the symmetric square
$$
C_D:=\bigl(C+p_1\bigr)+\bigl(C+p_2\bigr)+\cdots+\bigl(C+p_d\bigl)\in \mbox{Div}\bigl(C^{(2)}\bigr).
$$

This definition is extended to arbitrary line bundles $M=\cO_C(D_1-D_2)\in \mbox{Pic}(C)$, with $D_1$ and $D_2$ being effective divisors on $C$,  by setting
\begin{equation}\label{eq:pullback1}
\cL_{M}:=\cO_{C^{(2)}}\bigl(C_{D_1}-C_{D_2}\bigr)\in \mbox{Pic}\bigl(C^{(2)}\bigr).
\end{equation}

Note that $\cL_{M}$ can also be characterized as the unique line bundle on $C^{(2)}$ such that $q^*(\cL_M)\cong \mathrm{pr}_1^*(M)\otimes \mathrm{pr}_2^*(M)=M\boxtimes M$. Similarly, one defines $\cL_M':=\cL_M(-\delta)$ as the line bundle on $C^{(2)}$ such that
$q^*(\cL_M')\cong \mathrm{pr}_1^*(M)\otimes \mathrm{pr}_2^*(M)(-\Delta)$. The cohomology of these line bundles on $C^{(2)}$ is summarized for instance in \cite[Appendix]{I}:

\begin{align}\label{eq:cohsympr}
H^0(\cL_M)\cong \mbox{Sym}^2H^0(M), \ \ \ H^1(\cL_M)\cong H^1(M)\otimes H^0(M), \ \ \ H^2(\cL_M)\cong \bigwedge^2 H^1(M)\\
H^0(\cL_M')\cong \bigwedge^2 H^0(M), \ \ \ H^1(\cL_M')\cong H^1(M)\otimes H^0(M), \ \ \ H^2(\cL_M')\cong \mbox{Sym}^2 H^1(M).
\end{align}

\vskip 4pt

In the notation of Donagi and Witten \cite[3.3]{DW}, we have that for integers $a, c\in \mathbb Z$,
$$\cL_{\omega_C^{\otimes a}}(c\delta)\cong \cO_{C\times C}\bigl(a,a,c\bigr)^+.$$

\subsection{The Scorza correspondence and the Szeg\H{o} kernel}
We now fix an even theta-characteristic $\eta$ on a smooth curve $C$ of genus $g$ such that $h^0(C, \eta)=0$.  Following \cite{DK} we introduce the \emph{Scorza correspondence} associated to the pair
$[C, \eta]$, by setting
$$
S(C, \eta)=\Bigl\{(x,y)\in C\times C: h^0\bigl(C, \eta(x-y)\bigr) \neq 0\Bigr\}.
$$

The correspondence $S(C, \eta)$ being symmetric, it  is  the inverse under the map $q$ of a curve
$T(C, \eta)$ on $C^{(2)}$, that is,
$$S(C, \eta)=q^*\bigl(T(C, \eta)\bigr).$$
If $[C,\eta]\in \cS_g^+\setminus \Theta_{\mathrm{null}}$, then, since $S(C,\eta)$ is disjoint from the diagonal, it follows that the double cover $S(C,\eta)\rightarrow T(C,\eta)$ is unramified. It has been proved in \cite[Theorem 4.1]{FV} that, for a general  point $[C,\eta]\in \ss_g^+$, the curves $S(C,\eta)$ and $T(C,\eta)$ are smooth. In particular, by the adjunction formula, the genus of $S(C,\eta)$ equals $1+3g(g-1)$, whereas  the  genus of $T(C,\eta)$ equals $1+\frac{3}{2}g(g-1)$.

\vskip 4pt

The  morphism $\mathfrak{Sc}\colon \ss_g^+ \rightarrow \mm_{1+3g(g-1)}$ factors through the moduli space $\rr_{1+\frac{3}{2}g(g-1)}$ of Prym curves of genus $1+\frac{3}{2}g(g-1)$ and, following diagram (\ref{eq:diagram_stacks}), we can write
$$\mathfrak{Sc}=\chi \circ \xi,$$ where $\xi\bigl([C, \eta]):=\bigl[S(C, \eta)\stackrel{q}\longrightarrow T(C, \eta)\bigr]$ and  $\chi\colon \rr_{1+\frac{3}{2}g(g-1)}\rightarrow \mm_{1+3g(g-1)}$ is the morphism associating to an admissible double cover the stable model of its source. We refer to \cite[Proposition 4.1]{FL} for a detailed study of this map.

\vskip 3pt

We introduce the  difference map $$\varphi\colon C\times C \rightarrow JC, \ \ \ \  \varphi(x,y)=\cO_C(x-y).$$
Recalling that  $\Theta_{\eta}$ is  the symmetric \emph{Riemann theta divisor} associated to $\eta$ (see (\ref{eq:theta_eta})),  it is well-known, see  \cite[Proposition 7.1.5]{DK}, or \cite[3.4]{BZvB} that $\varphi^*\bigl(\cO(\Theta_{\eta})\bigr)\cong \eta\boxtimes \eta(\Delta)$, thus

\begin{equation}\label{eq:scorza_diagr}
\begin{tikzcd}[column sep=18pt]
\cO_{ C\times C } \bigl(S(C,\eta)\bigr) \cong \mathrm{pr}_1^* (\eta)\otimes \mathrm{pr}_2^* (\eta )\otimes \cO_{ C\times C} (\Delta)=\eta\boxtimes \eta(\Delta).
\end{tikzcd}
\end{equation}

\vskip 4pt

Using the canonical identification $\eta\boxtimes \eta(\Delta)_{\Delta} \cong \cO_C$, we can write down the following exact sequence on $C\times C$:
\[
0 \lra \eta\boxtimes \eta \lra \eta\boxtimes \eta(\Delta ) \lra \cO_C \lra 0,
\]
which gives rise to the following exact sequence of cohomology groups:
\begin{align*}
0 \lra H^0 (C, \eta )\otimes H^0 (C, \eta) \lra H^0 \bigl(C\times C, \eta\boxtimes \eta(\Delta)\bigr) \lra H^0 (C, \cO_C)\\ \lra \Bigl(H^0 (C, \eta )\otimes H^1 (C, \eta )\Bigr) \oplus \Bigl(H^1 (C,\eta )\otimes H^0 (C,\eta)\Bigr).
\end{align*}
Therefore, when $[C, \eta]\in \cS_g^+\setminus \Theta_{\mathrm{null}}$, that is, when $h^0 (C, \eta) =0$, we have that
\[
H^0 \bigl(C\times C, \eta\boxtimes \eta(\Delta)\bigr) \cong H^0 (C, \cO_C).
\]
From \eqref{eq:scorza_diagr}, we obtain that
\begin{equation}\label{eqRlin}
\cO_{C^{(2)}}\bigl(T(C,\eta)\bigr)\cong \cL_{\eta}(\delta).
\end{equation}

\subsection{From the Scorza curve to the Szeg\H{o} kernel}

We now explain how the Scorza curve $S(C, \eta)$  can be viewed as the zero locus of the Szeg\H{o} kernel defined by (\ref{eq:szego}).

\begin{definition}\label{def:szego} The (symmetric) \emph{Szeg\H{o} kernel} of the pair $[C, \eta]\in \cS_g^+\setminus \Theta_{\mathrm{null}}$ is defined as the unique section $s_{\eta}$ of the line bundle $\eta\boxtimes \eta(\Delta)$ on $C\times C$ such that
such that $s_{\eta|\Delta}=\mbox{id}_{\Delta}$.
\end{definition}

By definition, $\mathrm{s}_{\eta}$ is a symmetric function on $C\times C$ and the Scorza scurve $S(C,\eta)$ is its vanishing locus.

\vskip 3pt

If $h^0(C,\eta)=0$, then $S(C, \eta)$ is a uniquely determined curve in its linear system on $C\times C$,
whereas, when $h^0 (C, \eta) =2$, we have
\[
 h^0 \bigl(C\times C, \eta\boxtimes \eta(\Delta)\bigr) \geq 4.
\]
The limit of the Scorza correspondence for a general point $[C,\eta]\in \Theta_{\mathrm{null}}$, viewed as a curve in $C\times C$, belongs to the linear system $\bigl|\eta\boxtimes \eta(\Delta)\bigr|$ which has  dimension at least $3$.  This limit curve is  symmetric, that is, the inverse image of a divisor on $C^{(2)}$ which has to be an element of the linear system $\bigl|\cL_{\eta}(\delta)\bigr|$ on $C^{(2)}$. The following calculation shows that the limit of the Scorza correspondence when $h^0 (C, \eta )=2$ exists and it is unique.

\begin{proposition}\label{lemeta02}
If $[C, \eta]\in \cS_g^+$ is such that $h^0(C, \eta)\leq 2$, then $h^0\bigl(C^{(2)}, \cL_{\eta}(\delta)\bigr) = 1$.
\end{proposition}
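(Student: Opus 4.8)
The plan is to transfer the computation to the double cover $q\colon C\times C\to C^{(2)}$ and reduce everything to the vanishing or non-vanishing of a single connecting homomorphism. Since $q$ is branched along $\overline{\Delta}$ with $\delta^{\otimes 2}\cong\cO_{C^{(2)}}(\overline{\Delta})$, one has $q_*\cO_{C\times C}\cong\cO_{C^{(2)}}\oplus\delta^{-1}$, so the projection formula together with $q^*(\cL_\eta(\delta))\cong\eta\boxtimes\eta(\Delta)$ yields
\begin{equation*}
H^0\bigl(C\times C,\eta\boxtimes\eta(\Delta)\bigr)\cong H^0\bigl(C^{(2)},\cL_\eta(\delta)\bigr)\oplus H^0\bigl(C^{(2)},\cL_\eta\bigr).
\end{equation*}
By \eqref{eq:cohsympr} the second summand is $\mathrm{Sym}^2 H^0(C,\eta)$, whose dimension is $0$ when $h^0(C,\eta)=0$ and $3$ when $h^0(C,\eta)=2$ (recall $\eta$ is even, so $h^0(C,\eta)\in\{0,2\}$). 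Thus it suffices to show that $h^0\bigl(C\times C,\eta\boxtimes\eta(\Delta)\bigr)$ equals $1$ in the first case and $4$ in the second; the proposition then follows by subtraction.

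To compute the latter I would use the long exact cohomology sequence of the diagonal sequence $0\to\eta\boxtimes\eta\to\eta\boxtimes\eta(\Delta)\to\cO_C\to 0$ recorded above, namely
\begin{equation*}
0\to H^0(\eta)^{\otimes 2}\to H^0\bigl(\eta\boxtimes\eta(\Delta)\bigr)\to H^0(\cO_C)\xrightarrow{\ \partial\ }H^1\bigl(C\times C,\eta\boxtimes\eta\bigr).
\end{equation*}
When $h^0(C,\eta)=0$ the first term vanishes and $\partial$ is automatically zero, so $H^0(\eta\boxtimes\eta(\Delta))\cong H^0(\cO_C)$ is one-dimensional. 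When $h^0(C,\eta)=2$ the first term has dimension $4$ and $H^0(\cO_C)$ is one-dimensional, so everything reduces to showing that $\partial$ is nonzero; this forces $H^0(\eta\boxtimes\eta(\Delta))=H^0(\eta)^{\otimes 2}$, of dimension $4$.

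The heart of the matter is therefore the non-vanishing of $\partial$ when $h^0(C,\eta)=2$, and I expect this to be the main obstacle. I would establish it by Serre duality on the surface $C\times C$: since $\omega_{C\times C}\otimes(\eta\boxtimes\eta)^{-1}\cong\eta\boxtimes\eta$, the dual of $\partial$ is a map $H^1(C\times C,\eta\boxtimes\eta)\to H^1(C,\omega_C)$ which a standard computation identifies, via the K\"unneth decomposition $H^1(\eta\boxtimes\eta)\cong\bigl(H^0(\eta)\otimes H^1(\eta)\bigr)\oplus\bigl(H^1(\eta)\otimes H^0(\eta)\bigr)$, with the sum of the two cup-product maps $H^0(\eta)\otimes H^1(\eta)\to H^1(\eta^{\otimes 2})=H^1(\omega_C)$. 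Under the Serre-duality isomorphism $H^1(\eta)\cong H^0(\eta)^{\vee}$ this cup product is precisely the evaluation pairing, which is perfect and in particular nonzero as soon as $h^0(C,\eta)\neq 0$. Hence $\partial\neq 0$, which completes the argument.

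As a sanity check I would note the geometric meaning: the failure of $1\in H^0(\cO_C)$ to lift to $H^0(\eta\boxtimes\eta(\Delta))$ is exactly the statement that the Szeg\H{o} kernel of Definition \ref{def:szego} does not extend when $\eta$ becomes effective, in accordance with the vanishing $\theta_\eta(0)=0$ in \eqref{eq:szego}. One subtlety to keep in mind is that the linearization of the swap involution on $\cO_{C\times C}(\Delta)$ carries a sign coming from the action on the conormal bundle of $\Delta$, so that the symmetric tensors in $H^0(\eta)^{\otimes 2}$ land in the $\delta$-twisted (anti-invariant) summand $H^0(\cL_\eta)\cong\mathrm{Sym}^2 H^0(\eta)$; this is consistent with the decomposition above and does not affect the dimension count.
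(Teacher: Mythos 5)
Your proof is correct, and although it is organized differently from the paper's argument it rests on the same key fact. The paper works directly on $C^{(2)}$: it twists the restriction sequence of $\overline{\Delta}$ by $\cL_{\eta}$, uses $\omega_{C^{(2)}}\cong\cL_{\omega_C}(-\delta)$ to identify $H^2\bigl(C^{(2)},\cL_{\eta}(-2\delta)\bigr)\cong H^0\bigl(C^{(2)},\cL_{\eta}(\delta)\bigr)^{\vee}$, and reads the answer off the tail of the resulting long exact sequence; the only nontrivial input is the surjectivity of the cup product $H^0(\eta)\otimes H^1(\eta)\to H^1(\omega_C)$, which is exactly the perfect Serre pairing you invoke to prove $\partial\neq 0$. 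Your version runs the Serre-dual computation on $C\times C$: the diagonal sequence there, together with the conclusions $h^0\bigl(\eta\boxtimes\eta(\Delta)\bigr)=1$ when $h^0(\eta)=0$ and $\geq 4$ when $h^0(\eta)=2$, already appears in Section 1 of the paper; what you add is the sharpening to exactly $4$ via the injectivity of $\partial$, and the descent through $q_*\cO_{C\times C}\cong\cO_{C^{(2)}}\oplus\delta^{-1}$ followed by subtraction of $\dim\mathrm{Sym}^2 H^0(\eta)=3$. The cost of your route is the extra projection-formula bookkeeping (including the sign on the linearization of $\cO_{C\times C}(\Delta)$, which you correctly note is immaterial to the dimension count); the gain is that the decisive step acquires a transparent meaning, since the non-vanishing of $\partial$ for $h^0(\eta)=2$ is precisely the obstruction to lifting $1\in H^0(\cO_{\Delta})$, that is, to extending the Szeg\H{o} kernel of Definition \ref{def:szego} across $\Theta_{\mathrm{null}}$.
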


\begin{proof}
We consider the following exact sequence on $C^{(2)}$
$$0\longrightarrow \cL_{\eta}(-2\delta)\longrightarrow \cL_{\eta}\longrightarrow \cO_{\overline{\Delta}}(\cL_{\eta})\longrightarrow 0,$$
noting that $\cO_{\overline{\Delta}}(\cL_{\eta})\cong \eta^{\otimes 2}\cong \omega_C$. Writing the long exact sequence in cohomology and using the identifications (\ref{eq:cohsympr}) we obtain the following exact sequence:

\begin{align}
0 \lra H^0\bigl(C^{(2)}, \cL_{\eta }(- 2\delta)\bigr) \lra \Sym^2 H^0 (\eta ) \stackrel{\cup_0}{\lra} H^0 ( \omega_C ) \lra H^1\bigl(C^{(2)}, \cL_{\eta }(- 2\delta)\bigr)\\
\lra H^0 (\eta ) \otimes H^1 (\eta )
\stackrel{\cup_1}{\lra} H^1 (\omega_C ) \lra H^2\bigl(C^{(2)}, \cL_{\eta }(-2\delta)\bigr) \lra \bigwedge^2 H^1 (\eta ) \lra 0,
\end{align}
where the maps $\cup_0$ and $\cup_1$ are given by cup-product. By, e.g., \cite[Appendix]{I}, we have that $\omega_{C^{(2)}}\cong \cL_{\omega_C}(-\delta)$, therefore by Serre duality we also obtain that
$$H^2\bigl(C^{(2)}, \cL_{\eta}(-2\delta)\bigr)\cong H^0\bigl(C^{(2)}, \cL_{\omega_C}\otimes \cL_{\eta}^{\vee}(-\delta+2\delta)\bigr)^{\vee} \cong H^0\bigl(C^{(2)}, \cL_{\eta}(\delta)\bigr)^{\vee}.$$
When $h^0(C,\eta)=0$, then clearly $H^0\bigl(C^{(2)}, \cL_{\eta}(\delta)\bigr)^{\vee}\cong H^1(C,\omega_C)\cong \mathbb C$. When, on the other hand, $h^0(C, \eta)=2$, we then observe that the cup product map $\cup_1$ is surjective, because it is nonzero and $h^1(C,\omega_C)=1$. Therefore we obtain the canonical identification
$$H^0\bigl(C^{(2)}, \cL_{\eta}(\delta)\bigr)^{\vee}\cong \bigwedge^2 H^1(C,\eta)\cong \mathbb C.$$
We conclude that whenever $h^0(C,\eta)\leq 2$, the linear system $|\cL_{\eta}(\delta)|$ contains a unique curve, which is the limit in $C^{(2)}$ of the Scorza correspondence.
\end{proof}

\subsection{The limit of the Scorza correspondence on $\cS_g^+$}

The conclusion of Proposition \ref{lemeta02} offers a way to put forward a definition of the curves $T(C, \eta)$ and $S(C,\eta)$ for every smooth curve $[C, \eta]\in \cS_g^+$. However, this potentially leads to non-stable (or even non-reduced) curves in $C^{(2)}$, respectively, in $C\times C$.

\vskip 3pt

Let $\rho \colon \cC_g \ra \cS_g^+$ be the universal smooth even spin curve of genus $g$, that is, the stack of triples $[C, \eta, p]$, where $[C, \eta]\in \cS_g^+$ and $p\in C$. We denote by $\cC^2_g \ra \cS_g^+$, respectively, by $\rho_2 \colon \cC^{(2)}_g \ra \cS_g^+$ the relative Cartesian and symmetric powers of the universal curve $\cC_g \ra \cS_g^+$. The symmetrization morphism $q\colon \cC^2_g \ra \cC^{(2)}_g$ is ramified along the relative diagonal $\cD_g \subseteq \cC^2_g$ and we let $\delta_g$ be the divisor class on $\cC^{(2)}_g$ whose pull-back under $q$ is $\cD_g$. Let $\eta_g$ be a universal even spin bundle\footnote{Note that since $\cS_g$ has the structure of a $\mathbb Z_2$-gerbe due to the fact that each stable spin curve has a copy of $\mathbb Z_2$ in ts automorphism group, the universal line bundle $\eta_g$ should be viewed as a $\mathbb Z_2$-twisted line bundle over $\mathcal{C}_g$, see also the discussion in \cite[1.1]{DW}.} over $\cC_g$. By definition, the restriction of $\eta_g$ to the fiber of $\cC_g \ra \cS_g^+$ over  $[C, \eta]$ is the spin bundle $\eta\in \mbox{Pic}^{g-1}(C)$.
We denote $\cL_{\eta_g}$ the line bundle on $\cC^{(2)}_g$ such that
\begin{equation}\label{eq:univ_spin3}
q^*(\cL_{\eta_g})=\mathrm{pr}_1^*(\eta_g) \otimes \mathrm{pr}_2^*(\eta_g),
\end{equation}
where $\mathrm{pr}_i\colon \cC^{2}_g\rightarrow \cC_g$ are the two projections.

\vskip 4pt

By Proposition \ref{lemeta02}, the generic fiber of $({\rho_2})_*\bigl(\cL_{\eta_g}(\delta_g)\bigr)$ is one-dimensional. Hence, since $\rho_2$ is flat, the push-forward $({\rho_2})_*\bigl(\cL_{\eta_g}(\delta_g)\bigr)$ is a reflexive sheaf of rank one. Via the Auslander-Buchsbaum formula, there exists an open subset $\cS_g^{\mathrm{free}}$ of $\cS_g^+$ whose complement $\cS_g^+\setminus \cS_g^{\mathrm{free}}$ has codimension at least $3$ such that $(\rho_2)_*\bigl(\cL_{\eta_g}(\delta_g)\bigr)$ is locally free over $\cS_g^{\mathrm{free}}$.

Clearly, $\eta_g$ is only determined up to the pull-back under $\rho$ of a line bundle from $\cS_g^+$ and we normalize $\eta_g$ in such a way that
$$(\rho_2)_*\bigl(\cL_{\eta_g}(\delta_g)\bigr) |_{\cS_g^{\mathrm{free}}}\cong \cO_{\cS_g^{\mathrm{free}}}.$$

\begin{definition}\label{def:limitscorza}
For any smooth spin curve $[C ,\eta]\in \cS_g^{\mathrm{free}}$, we define the limiting Scorza correspondence inside the symmetric product $C^{(2)}$ to be the image of  the natural map
\[
({\rho_2})_*\left(\cL_{\eta_g}(\delta_g)\right)_{\bigl|[C ,\eta]} \lra H^0\bigl(C^{(2)} , \cL_{\eta}(\delta)\bigr).
\]
\end{definition}

Definition \ref{def:limitscorza} singles out a one-dimensional subspace of $H^0\bigl(C^{(2)} , \cL_{\eta}(\delta)\bigr)$ for a spin curve $[C ,\eta]\in \cS_g^{\mathrm{free}}$. The inverse image in $C\times C$ of the zero scheme of a non-zero element of this one-dimensional subspace gives a well-defined, possibly non-reduced, curve in $C\times C$.

\subsection{The limit of the Scorza correspondence at a general point of $\Theta_{\mathrm{null}}$} In what follows,  we determine the limit of the Scorza correspondence when $h^0 (C, \eta) =2$. To that end, we fix a general point $[C, \eta]\in \Theta_{\mathrm{null}}$, therefore $|\eta|$ induces a cover $f\colon C\rightarrow \PP^1$ of degree $g-1$. We may assume that the pencil $|\eta|\in W^1_{g-1}(C)$ has only simple ramification points. The ramification divisor $\mathfrak{Ram}$ of $f$ belongs to the linear system $|\omega_C\otimes f^*(\cO_{\PP^1}(2))|=\bigl|\omega_C^{\otimes 2}\bigr|$ and consists of $4g-4$ distinct points. We write $\mathfrak{Ram}=x_1+\cdots+x_{4g-4}$. In particular, the canonical bundle $\omega_C$ induces canonically a double cover
\begin{equation}\label{eq:candouble}
\widetilde{C}_{\eta}\longrightarrow C
\end{equation}
branched over the points $x_1, \ldots, x_{4g-4}$. Note that $g(\widetilde{C}_{\eta})=4g-3$.

\vskip 3pt

We introduce the \emph{trace curve}
\begin{equation}\label{eq:trace_curve}
\Gamma_{\eta}:=\Bigl\{x+y\in C^{(2)}: H^0\bigl(C, \eta(-x-y)\bigr)\neq 0\Bigr\}
\end{equation}
and its double cover $\widetilde{\Gamma}_{\eta}:=q^{-1}\bigl(\Gamma_{\eta}\bigr)\subseteq C\times C$. It is well known, see, e.g., \cite[Lemma 2.1]{I} that
\begin{equation}\label{eq:x2}
\cO_{C^{(2)}}\bigl(\Gamma_{\eta}\bigr)\cong  \cL_{\eta}(-\delta).
\end{equation}
By the adjunction formula, using again that $\omega_{C^{(2)}}\cong \cL_{\omega_C}(-\delta)$, we find that $\Gamma_{\eta}$ is a curve of arithmetic genus $g\bigl(\Gamma_{\eta}\bigr)=\frac{(g-3)(3g-4)}{2}$. Since $|\eta|$ is simply ramified, using, for instance, \cite[Lemmas 2.1 and 2.2]{vdGK2}, we conclude that, for a general choice of $[C, \eta]\in \Theta_{\mathrm{null}}$, the curves $\Gamma_{\eta}$ and $\widetilde{\Gamma}_{\eta}$ are smooth and irreducible.

\vskip 4pt

\begin{theorem}\label{propthetanull2limit} We fix a general vanishing theta-null $[C, \eta]\in \Theta_{\mathrm{null}}$.

\vskip 3pt

\noindent (1) The limiting Scorza correspondence $T(C,\eta)$  is the transverse union of $\Gamma_{\eta}$ and the diagonal $\widetilde{\Delta}\cong C$, meeting at the $4g-4$ diagonal points $2\cdot x_1, \ldots, 2\cdot x_{4g-4}\in C^{(2)}$.

\vskip 5pt

\noindent (2) The limiting Scorza correspondence $S(C, \eta)$ is the transverse union of $\widetilde{\Gamma}_{\eta}$ and the curve $\widetilde{C}_{\eta}$ meeting at the $(4g-4)$  diagonal points  $(x_1, x_1), \ldots, (x_{4g-4}, x_{4g-4})$.
\end{theorem}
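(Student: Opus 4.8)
The plan is to settle part~(1) by linear algebra on $C^{(2)}$ together with Proposition~\ref{lemeta02}, and then to obtain part~(2) by pulling back along $q$ and resolving the resulting non-reduced diagonal through a semistable reduction.

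For part~(1): by (\ref{eqRlin}) the limit $T(C,\eta)$ is \emph{the} effective member of $|\cL_{\eta}(\delta)|$, unique since $h^{0}(\cL_{\eta}(\delta))=1$ by Proposition~\ref{lemeta02}. Because $q^{*}\overline{\Delta}=2\Delta=q^{*}(2\delta)$ and $q^{*}$ is injective on Picard groups, one has $\cO_{C^{(2)}}(\overline{\Delta})\cong\delta^{\otimes2}$; combined with (\ref{eq:x2}) this gives $\cL_{\eta}(\delta)\cong\cL_{\eta}(-\delta)\otimes\cO(\overline{\Delta})\cong\cO_{C^{(2)}}(\Gamma_{\eta}+\overline{\Delta})$. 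As $\Gamma_{\eta}$ and $\overline{\Delta}$ are distinct irreducible curves, $\Gamma_{\eta}+\overline{\Delta}$ is a reduced effective member of this one-dimensional system, so uniqueness forces $T(C,\eta)=\Gamma_{\eta}+\overline{\Delta}$. (A general $[C,\eta]\in\Theta_{\mathrm{null}}$ lies in $\cS_g^{\mathrm{free}}$, since the complement has codimension $\geq 3$, so Definition~\ref{def:limitscorza} applies.) For transversality, a point $2p\in\overline{\Delta}$ lies on $\Gamma_{\eta}$ iff $h^{0}(\eta(-2p))\neq0$, i.e.\ iff $p$ is a ramification point of $|\eta|$, so $\Gamma_{\eta}\cap\overline{\Delta}=\{2x_{1},\dots,2x_{4g-4}\}$ set-theoretically; the intersection number is $\Gamma_{\eta}\cdot\overline{\Delta}=\tfrac12\,q^{*}\Gamma_{\eta}\cdot q^{*}\overline{\Delta}=\widetilde{\Gamma}_{\eta}\cdot\Delta=\deg\bigl((\eta\boxtimes\eta(-\Delta))|_{\Delta}\bigr)=(2g-2)+(2g-2)=4g-4$, using $\cO(\Delta)|_{\Delta}\cong T_C$. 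Since this equals the number of points, each meeting is transverse.

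For part~(2): since $S_{t}=q_{t}^{-1}(T_{t})$ for $t\neq0$ and $q$ is finite flat, the flat limit in $C\times C$ is $q^{-1}(T(C,\eta))=\widetilde{\Gamma}_{\eta}+2\Delta$, which is non-reduced along $\Delta$ and hence not stable. To extract the stable limit $\mathfrak{Sc}([C,\eta])$ I would run a semistable reduction along $\Delta$. Fixing a one-parameter family $[C_t,\eta_t]$ through $[C,\eta]$ transverse to $\Theta_{\mathrm{null}}$, I write $w=u_1-u_2$ for the normal coordinate and $s$ for a coordinate along $\Delta$, and expand the Szeg\H{o} section $\varphi^{*}\theta_{\eta_t}$. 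Since $\theta_{\eta}$ has a double point at the origin and $\varphi(x,y)=w\,\phi_C(p)+O(w^{2})$, the leading terms are $\varphi^{*}\theta_{\eta_t}=\alpha t+\tfrac12\,\theta_{2}(\phi_C(p))\,w^{2}+(\text{higher order})$, where $\theta_{2}$ is the quadratic tangent cone of $\Theta_{\eta}$ at $0$; here $\alpha\neq0$ and the linear-in-$w$ term is $O(tw)$ by transversality. After the base change $t=\tau^{2}$ (which creates $A_1$-singularities along $\Delta$) and the rescaling $w=\tau w'$, the central fibre acquires, in place of the doubled diagonal, the new component $\{\theta_{2}(\phi_C(p))\,w'^{2}=-2\alpha\}$, a double cover of $\Delta\cong C$ branched exactly where $\theta_{2}\circ\phi_C$ vanishes.

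Finally I would identify this component with $\widetilde{C}_{\eta}$. By the Riemann--Kempf description of the tangent cone at a double point, the quadric $\{\theta_{2}=0\}$ cuts the canonical curve $\phi_C(C)$ along the Wronskian of a basis of $H^{0}(C,\eta)$, which is $\mathfrak{Ram}=x_{1}+\cdots+x_{4g-4}\in|\omega_{C}^{\otimes2}|$; thus the new component is branched exactly over $x_{1},\dots,x_{4g-4}$. Setting $v=(\theta_{2}\circ\phi_C)\,w'$ puts its equation in the form $v^{2}=-2\alpha\,(\theta_{2}\circ\phi_C)$ with $v$ a section of $\omega_C$ (as $w'$ lives on $N_{\Delta}\cong T_C$ while $\theta_{2}\circ\phi_C\in H^{0}(\omega_{C}^{\otimes2})$), so the associated square root is $\omega_C$ and the component is precisely the canonical double cover $\widetilde{C}_{\eta}$ of (\ref{eq:candouble}). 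The unique point of $\widetilde{\Gamma}_{\eta}$ over each node $(x_i,x_i)$ glues to the ramification point of $\widetilde{C}_{\eta}$ there, giving $S(C,\eta)=\widetilde{\Gamma}_{\eta}\cup\widetilde{C}_{\eta}$ with transverse meetings at the $(x_i,x_i)$; as a check, the arithmetic genus $[(g-1)(3g-8)+1]+(4g-3)+(4g-4)-1=1+3g(g-1)$ confirms stability. The hard part is exactly this last identification -- extracting both the branch locus (via the tangent cone and the Wronskian) and the $\omega_C$-structure of $\widetilde{C}_{\eta}$ from the local expansion, together with justifying the normal form of $\varphi^{*}\theta_{\eta_t}$ (in particular $\alpha\neq0$ and the vanishing of the linear term at $t=0$), which rest on the genericity of $[C,\eta]$ and the transversality of the family to $\Theta_{\mathrm{null}}$.
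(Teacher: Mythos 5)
Your proposal is correct and its overall skeleton matches the paper's: part (1) is obtained, exactly as in the paper, by observing that $\Gamma_{\eta}+\overline{\Delta}\in\bigl|\cL_{\eta}(\delta)\bigr|$ and invoking the uniqueness from Proposition~\ref{lemeta02}, and part (2) by pulling back to get the non-reduced divisor $2\Delta+\widetilde{\Gamma}_{\eta}$ and performing a semistable reduction. Where you genuinely diverge is in the execution. For transversality in (1) the paper compares the arithmetic genus of $\Gamma_{\eta}\cup\overline{\Delta}$ with that of a member of $\bigl|\cL_{\eta}(\delta)\bigr|$, while you compare the intersection number $\Gamma_{\eta}\cdot\overline{\Delta}=4g-4$ with the number of set-theoretic intersection points; both are clean and yours is arguably more direct. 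In (2) the paper runs the abstract stable-reduction recipe (blow up the $4g-4$ points $\Delta\cdot\widetilde{\Gamma}_{\eta}$ of the total space, base change of order two, normalize) and then simply names the resulting double cover of $\Delta\cong C$ as $\widetilde{C}_{\eta}$; you instead expand $\varphi^{*}\theta_{\eta_t}$ near the diagonal, carry out the base change and rescaling explicitly, and read off the new component as $\{v^{2}=-2\alpha\,(\theta_{2}\circ\phi_C)\}$ inside the total space of $\omega_C$. This buys you two things the paper leaves implicit: the identification of the branch divisor with $\mathfrak{Ram}$ via the tangent cone of $\Theta$ at the origin (equivalently the Wronskian of $|\eta|$), and, more importantly, the identification of the square root defining the double cover as $\omega_C$ itself rather than a twist of $\omega_C$ by a point of order two --- the only datum that actually singles out $\widetilde{C}_{\eta}$ among double covers branched over $\mathfrak{Ram}$. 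Two small imprecisions, neither fatal: the linear-in-$w$ term of $\varphi^{*}\theta_{\eta_t}$ vanishes identically because $\theta_{\eta_t}$ is an even function, not merely to order $O(tw)$ by transversality; and the local analysis should be supplemented at the $4g-4$ points $(x_i,x_i)$ where $\widetilde{\Gamma}_{\eta}$ meets $\Delta$ (the paper's blow-up there, producing the contracted bridges $\widetilde{E}_i$, is what formally justifies the gluing you assert), although your arithmetic genus count does confirm that no further components or identifications can occur.
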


\begin{proof}
We pick a general point $[C,\eta]\in \thet$. Since $\Gamma_{\eta}\in \bigl|\cL_{\eta}(-\delta)\bigr|$, whereas $\overline{\Delta}\in |2\delta|$, it follows that that $\Gamma_{\eta}+\overline{\Delta}\in \bigl|\cL_{\eta}(\delta)\bigr|$. Since in Lemma \ref{lemeta02} we also established that $h^0\bigl(C^{(2)}, \cL_{\eta}(\delta)\bigr)=1$, it follows that $\Gamma_{\eta}+\overline{\Delta}$ is the only curve in the linear system $\bigl|\cL_{\eta}(\eta)\bigr|$. Note that $\Gamma_{\eta}$ and $\overline{\Delta}$ intersect at the $4g-4$ point of the form $2\cdot x$, where $x\in \mathfrak{Ram}$. Since
$$g\bigl(\Gamma_{\eta}\bigr)+g(\overline{\Delta})+\#\bigl(\Gamma_{\eta}\cap \overline{\Delta}\bigr)-1=\frac{(g-3)(3g-4)}{2}+g-1+4(g-1)=1+\frac{3g(g-1)}{2},$$
it also follows that the intersection of the curves $\Gamma_{\eta}$ and $\overline{\Delta}\cong C$ is everywhere transverse, in particular the  curve $\Gamma_{\eta}\cup \overline{\Delta}$ is stable and is therefore the limiting Scorza correspondence $T(C,\eta)$.

\vskip 4pt

Passing to the cartesian product, the limiting Scorza correspondence as a curve inside $C\times C$ is then the \emph{non-reduced} curve $2\Delta+\widetilde{\Gamma}_{\eta}$. The corresponding stable curve $S(C,\eta)$ must be an admissible double cover having as target a nodal curve stably equivalent to the stable curve $\Gamma_{\eta} \cup \overline{\Delta}$, where we identify $\overline{\Delta}$ with $C$. For
$i=1, \ldots, 4g-4$, we insert a smooth rational curve $E_i\cong \PP^1$ meeting $\Gamma_{\eta}$ at the point $\tilde{x}_i:=2\cdot x_i$ (viewed as a point of $\Gamma_{\eta}$) and $\overline{\Delta}$ at the point $x_i^-:=2\cdot x_i$ (viewed as a point of $\overline{\Delta}$). We then consider the following admissible double cover
\begin{equation}\label{eq:double_cov}
f\colon \widetilde{\Gamma}_{\eta}\cup \widetilde{E}_1\cup \ldots \cup \widetilde{E}_{4g-4}\cup \widetilde{C}_\eta \longrightarrow \Gamma_{\eta}\cup E_1\cup \ldots \cup E_{4g-4}\cup C,
\end{equation}
where $f_{|\widetilde{C}_{\eta}}\colon \widetilde{C}_{\eta}\rightarrow C$ is the cover described by (\ref{eq:candouble}), $f_{|\widetilde{\Gamma}_{\eta}}\colon \widetilde{\Gamma}_{\eta}\rightarrow \Gamma_{\eta}$ is the cover induced by the map $q$, whereas for $i=1, \ldots, 4g-4$, the restriction $f_{|\widetilde{E}_i}\colon \widetilde{E}_i\rightarrow E_i$ is the double cover branched over the points $\tilde{x}_i$ and $x_i^-$.

\vskip 3pt

Note that this is the procedure prescribed by the Stable Reduction Theorem, see , e.g., \cite[p. 125]{HMo} in order to eliminate the multiple component in the limiting curve $2\Delta+\widetilde{\Gamma}_{\eta}$. Indeed, assuming we have a family of curves $\varphi\colon \cX\rightarrow (T, t_0)$, where $T$ is a smooth $1$-dimensional base such that $\varphi^{-1}(t)$ is a smooth curve for $t\in T\setminus\{t_0\}$, whereas $\varphi^{*}(0)=2\Delta+\widetilde{\Gamma}_{\eta}$, then the total space $\cX$ is necessarily singular at the points $\Delta\cdot \widetilde{\Gamma}_{\eta}$. Blowing these $4g-4$ points up,  making a base change $(T, t_0)\rightarrow (T, t_0)$ of order $2$ and then normalizing, the resulting fibration $\varphi'\colon \mathcal{X}'\rightarrow T$ has central fibre equal to $\widetilde{\Gamma}_{\eta}+E_1'+\cdots+E_{4g-4}'+\widetilde{C}_{\eta}$, where $E_i'$ is the inverse image of the corresponding exceptional divisor of $\mathrm{Bl}_{4g-4}(\cX)$, whereas $\widetilde{C}_{\eta}$ is the double cover of $\Delta\cong C$ branched over the points  $\Delta\cdot E_i'$. This finishes the proof.
\end{proof}

\section{Extension of the Scorza map to the boundary of $\ss_g^+$}\label{sect:scorza_bdry}

 We now turn our attention to the limits of the Scorza curves on the boundary of the moduli space $\ss_g^+$ of stable even spin curves of genus $g$. One has the rational map
$$\mathfrak{Sc}\colon \ss_g^+\dashrightarrow \mm_{1+3g(g-1)}, \ \ \ \mathfrak{Sc}([C, \eta]):=[S(C, \eta)],$$
considered in the Introduction. Since $\ss_g^+$ is a normal variety, $\mathfrak{Sc}$ is defined outside a set of codimension at least $2$, thus the limit Scorza curves exist generically, as stable curves, on each boundary component of $\ss_g^+$. We now describe explicitly a modular extension of $\mathfrak{Sc}$, generically on each boundary component.

\vskip 4pt

For basic facts about the boundary divisors of $\ss_g^+$ we refer to \cite{Cor, F2}. The boundary divisors of $\ss_g^+$ are traditionally denoted by $A_i, B_i$, where $i=0, \ldots, \lfloor \frac{g}{2}\rfloor$.

% we set $\alpha_0:=[A_0], \beta_0:=[B_0]\in \mathrm{Pic}(\ss_g^+)$. If $\pi:\ss_g^+\rightarrow \mm_g$ is the map forgetting the spin structure  and contracting the destabilizing components, then
%$$\pi^*(\delta_0)=\alpha_0+2\beta_0.$$

\subsection{The limit Scorza correspondence over a general point of $A_0$.}

We begin with a general point of the boundary divisor  $A_0$ of $\ss_g^+$, corresponding to the following data:
\begin{itemize}
\item A general $2$-pointed curve $[C, p, q]\in \cM_{g-1, 2}$. We denote by $X:=C/p\sim q$ the stable curve of genus $g$ obtained by identifying $p$ and $q$ and let $\nu\colon C\rightarrow X$ be the normalization map with $\nu(p)=\nu(q)=u\in X$.
 \item A line bundle $\eta_C\in \mathrm{Pic}^{g-1}(C)$ such that $\eta_C^{\otimes 2}\cong \omega_C(p+q)$ and $H^0(X, \eta)=0$, where we denote by $\eta\in \mathrm{Pic}(X)$ the locally free sheaf such that $\nu^*(\eta)=\eta_C$.
 \end{itemize}
Note that giving $\eta\in \mathrm{Pic}^{g-1}(X)$ is equivalent to specifying $\eta_C$, as well as a gluing between the fibres $\eta_{_C|p}$ and $\eta_{_C|q}$.

\vskip 4pt

Let $\nu\times \nu \colon C\times C\rightarrow X\times X$ be the product map. The limit Scorza correspondence is a symmetric curve in $X\times X$ and we let $\Sigma$ be its pull-back under $\nu\times \nu$.
 We have the following exact sequence on $X$:
\[
0\longrightarrow \cO_X\longrightarrow \nu_*\cO_C\longrightarrow \mathbb C_u\longrightarrow 0
\]
which, for any $x,y \in C\setminus \{p, q\}$ gives us the exact sequence
\begin{equation}\label{eq:exseqA0}
0\longrightarrow \eta (x-y) \longrightarrow \nu_*\bigl(\eta_C (x-y)\bigr) \longrightarrow \mathbb C_u\longrightarrow 0,
\end{equation}
where to simplify notation, we identify $\nu(x)$ with $x$ and $\nu(y)$ with $y$. We denote by

 $$\sigma_{x, y}\in H^0\bigl(C, \eta_C(x-y)\bigr)$$ a non-zero section. The limiting Scorza curve $S(X, \eta)$ corresponding to the point $[X, \eta]\in A_0$ contains the closure of the set of pairs $(x,y)\in X\times X$ such that $h^0(X, \eta(x - y)) \neq 0$, that is, the image of $\sigma_{x,y}$ is zero in $\bC_u$ in the exact sequence (\ref{eq:exseqA0}). Note that $\sigma_{x, y}\cdot \sigma_{y, x}\in H^0(C, \eta_C^{\otimes 2})=H^0(C, \omega_C(p+q))$ is a meromorphic differential
 on $C$ with non-zero residue at both $p$ and $q$, unique modulo the image of $H^0(C, \omega_C)$.

\begin{definition}
Given a general spin curve $[X=C/p\sim q, \eta]\in A_0$ as above, we define the correspondence $\Sigma$ as the closure in $C\times C$ of the locus

 \begin{align*}
 \Sigma_0:=\Bigl\{(x, y)\in \bigl(C \setminus \{p, q\}\bigr) \times \bigl(C \setminus \{p, q\}\bigr) :\\
  \sigma_{x, y}(p)=\sigma_{x,y}(q),\  \mbox{ for } \ \ \ 0\neq \sigma_{x,y}\in H^0\bigl(C, \eta_C(x-y)\bigr)\Bigr\}.
\end{align*}

\end{definition}

The equality $\sigma_{x,y}(p)=\sigma_{x,y}(q)$ is to be understood in terms of the identification of the fibres $\eta_{C}(x-y)_{|p}$ and $\eta_C(x-y)_{|q}$ which is part of the data defining $\eta$. Next we determine the class of the curve $\Sigma$:

 \begin{proposition}\label{prop:class_sigma}
The correspondence $\Sigma$ is symmetric of valence $g$. Furthermore $\Sigma$ intersects the diagonal $\Delta$ at the points $(p, p)$ and  $(q, q)$.
 \end{proposition}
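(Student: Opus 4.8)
The plan is to identify $\Sigma$ with the pull-back of the limiting Scorza correspondence on $X$, and to read off its numerical invariants from the degree-$g$ line bundle $\eta_C(x)$.

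\textbf{Symmetry.} By the exact sequence (\ref{eq:exseqA0}) and the cohomology sequence following it, for $x,y\in C\setminus\{p,q\}$ the condition $\sigma_{x,y}(p)=\sigma_{x,y}(q)$ is equivalent to $h^0\bigl(X,\eta(x-y)\bigr)\neq 0$; thus $\Sigma_0=\{(x,y): h^0(X,\eta(x-y))\neq 0\}$. Since $\eta^{\otimes 2}\cong\omega_X$, Serre duality on the Gorenstein curve $X$ gives $h^0(X,\eta(x-y))=h^1(X,\eta(x-y))=h^0(X,\eta(y-x))$, the first equality because $\chi(\eta(x-y))=0$ and the second by duality. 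Hence $\Sigma_0$ is invariant under $(x,y)\mapsto(y,x)$, and so is its closure $\Sigma$.

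\textbf{Valence.} Fix a general $x$ and set $\Sigma(x):=\mathrm{pr}_2\bigl(\Sigma\cap(\{x\}\times C)\bigr)$. Then $\Sigma(x)=\{y:\eta(x)\otimes\cO_X(-y)\ \text{is effective on}\ X\}$ is the support of the unique divisor of the complete linear system $|\eta(x)|$ on $X$: indeed $\deg\eta(x)=g$ and $h^1(X,\eta(x))=h^0(X,\eta(-x))\le h^0(X,\eta)=0$, so $h^0(X,\eta(x))=1$ by Riemann--Roch. Pulling this divisor back under $\nu$ (it avoids the node for general $x$) yields $\Sigma(x)\sim\eta_C(x)$, a divisor of degree $g$; thus $\cO_{C\times C}(\Sigma)$ restricts to $\eta_C(x)$ on every general fibre $\{x\}\times C$, the correspondence $\Sigma$ has bidegree $(g,g)$ and is of valence $g$. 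Moreover $\Sigma(x)-\Sigma(x')\sim x-x'$, so $\Sigma$ induces the identity endomorphism of $JC$; together with symmetry and the bidegree $(g,g)$ this forces $[\Sigma]=(g-1)(f_1+f_2)+\Delta$ in $N^1(C\times C)$, where $f_1,f_2$ are the two ruling classes.

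\textbf{Diagonal.} A point $(x,x)$ with $x\ne p,q$ lies on $\Sigma_0$ iff the unique section $\sigma_0\in H^0(C,\eta_C)$ --- which is independent of $x$, since $\eta_C(x-x)=\eta_C$ --- satisfies $\sigma_0(p)=\sigma_0(q)$; but this would give $h^0(X,\eta)\ne 0$, contradicting the defining ineffectivity of $\eta$. Hence no interior diagonal point lies on $\Sigma$, so $\Sigma\cap\Delta$ is supported over the node $u$, among $\{(p,p),(q,q)\}$. From $[\Sigma]=(g-1)(f_1+f_2)+\Delta$ and $\Delta^2=4-2g$ (here $g(C)=g-1$) one computes $\Sigma\cdot\Delta=2(g-1)+(4-2g)=2$; since the only available diagonal points are $(p,p)$ and $(q,q)$, each occurs with multiplicity one and $\Sigma$ meets $\Delta$ transversally precisely at $(p,p)$ and $(q,q)$. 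This is consistent with the branch locus $\{p,q\}$ of the double cover attached to $\eta_C^{\otimes 2}\cong\omega_C(p+q)$.

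\textbf{Expected main obstacle.} The delicate point is the behaviour of the closure near the node: one must check that $\Sigma$ carries no spurious vertical or horizontal fibre components (so that the class is exactly $(g-1)(f_1+f_2)+\Delta$ and the bidegree is genuinely $(g,g)$), and that the two intersections with $\Delta$ are truly transverse at $(p,p)$ and $(q,q)$. This needs a local study of the family of sections $\sigma_{x,y}$ as $(x,y)$ approaches the node-diagonal, where $\eta_C(x-y)$ degenerates and the evaluation $\sigma\mapsto\sigma(p)-\sigma(q)$ has to be tracked with care.
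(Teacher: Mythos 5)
Your symmetry and valence arguments track the paper's own proof closely: the paper likewise deduces symmetry from the symmetry of the image in $X\times X$ (your Serre-duality reformulation on the Gorenstein curve $X$ is an equivalent route), and it computes the bidegree exactly as you do, from $h^0(X,\eta(x))=1$ and $\deg \eta_C(x)=g$. Your derivation of the full class $[\Sigma]=(g-1)(F_1+F_2)+\Delta$ from the induced identity endomorphism of $JC$ is sound for general $C$ and is in fact what the paper uses (without detailed justification) in the subsequent genus computation.

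The genuine gap is in the diagonal step, and it is exactly the point you flag at the end but do not resolve. You correctly exclude interior diagonal points and compute $\Sigma\cdot\Delta=2$, but from ``the intersection has length $2$ and is supported in $\{(p,p),(q,q)\}$'' you cannot conclude that \emph{each} of the two points occurs with multiplicity one: a priori the entire length-$2$ intersection could be concentrated at $(p,p)$ alone (or at $(q,q)$ alone), with $\Sigma$ tangent to $\Delta$ there. There is no automorphism of the configuration exchanging $(p,p)$ and $(q,q)$ (the factor-swap involution fixes each of them, and $C$ is general so has no automorphisms), so symmetry does not rescue this. What is missing is a proof that $(p,p)$ and $(q,q)$ actually lie on the closure $\Sigma$. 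The paper supplies this by a degeneration argument: it takes a one-parameter family of spin curves with sections $\bar{x},\bar{y}$ whose central fibre is the semistable model $C\cup E\cup E'$ obtained by blowing up, writes the Mayer--Vietoris sequence for $\eta_0(x-y)$ on $C\cup E\cup E'$, and shows that for $x\in E'$ the point $y\in E'$ is uniquely determined by the gluing conditions at $p$, $p'$, $q$; stabilizing, this exhibits $(p,p)$ (and by the same argument $(q,q)$) as a limit of points of $\Sigma_0$. Some such local analysis near the node is unavoidable, so your proof as written is incomplete at precisely the step the proposition is most interested in.
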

 \begin{proof}
The symmetry follows from the fact that the image of $\Sigma$ in $X\times X$ is symmetric.

%$\sigma_{x,y}\cdot \sigma_{y,x}\in H^0\bigl(C, \omega_C(p+q)\bigr)$ satisfies
%$$\sigma_{x,y}(p)\cdot \sigma_{y,x}(p)=\sigma_{x,y}(q)\cdot \sigma_{y,x}(q).$$

Since $C$ is general, we can write $\Sigma\equiv a(F_1+F_2)+b\Delta$. To calculate the intersection of $\Sigma$ with a general fibre $F_1$ of the projection $C\times C\rightarrow C$, we choose a point $x\in C \setminus \{p, q\}$. From Riemann-Roch it follows that $H^0(X, \eta(x))$ is one-dimensional. Let $\sigma_x$ be a generator. The points $y\in C$ such that $(x, y)\in \Sigma$ are the zeroes of $\sigma_x$ which, via the Mayer-Vietoris sequence, can be considered a nonzero section of a line bundle of degree $g=\mbox{deg}(\eta_C)+1$ on $C$, that is, $\Sigma \cdot F_1=g$.

\vskip 3pt

Assume now that $(x,x) \in S(X, \eta)$. If $x\in C\setminus \{p,q\}$, then, by semicontinuity, we obtain that $h^0(X, \eta)\geq 1$, which is a contradiction. To deal with the case $x=p$, we construct a family $\varphi\colon \mathcal{X}\rightarrow (B,0)$ of spin curves of genus $g$ endowed with two section $\bar{x}, \bar{y}\colon B\rightarrow \mathcal{X}$, such that $\varphi^{-1}(0)=C\cup E\cup E'$, with $E$ and $E'$ smooth rational curves such that $C\cap E'=\{p\}$, $C\cap E=\{q\}$ and $E\cap E'$ is one point that we denote $p'$. Furthermore,
$$\bar{x}(0)=x\in E'\setminus \{p, p'\}\ \ \mbox{ and } \ \ \bar{y}(0)=y\in E'\setminus \{p,p'\}$$ are distinct points on $E'$. We may also assume that each fibre $X_b=\varphi^{-1}(b)$ is endowed with a spin structure $\eta_b\in \mbox{Pic}^{g-1}(X_b)$ such that $h^0\bigl(X_b, \eta_b(\bar{x}(b)-\bar{y}(b))\bigr)\geq 1$. Since the limiting spin structure on the central fibre $\varphi^{-1}(0)=C\cup E$ is given by that corresponding to the curve $[X, \eta]\in A_0$ we started with, it follows that $\eta_{0|C}=\eta_C$, $\eta_{0|E'}=\cO_{E'}$ and $\eta_{0|E}=\cO_E$. Writing down the Mayer-Vietoris sequence on $C\cup E'\cup E$, we obtain:
$$0\longrightarrow H^0\bigl(C\cup E\cup E', \eta_0(x-y)\bigr)\longrightarrow H^0(C, \eta_C)\oplus H^0\bigl(E', \cO_{E'}(x-y)\bigr)\oplus H^0(E, \cO_E) \stackrel{\mathrm{ev}_{p,q}}\longrightarrow \mathbb C^3_{p,p',q}.$$ Denoting by $\sigma_C$ a generator of the vector space $H^0(C,\eta_C)$, it follows that, given $x$, the point $y\in E$ is uniquely determined by the condition that the unique section $\sigma_{E'}\in H^0(E', \cO_{E'}(x))$ that is compatible with $\sigma_C$ and with a nowhere vanishing generator $\sigma_E\in H^0(E,\cO_E)$ vanishes at the point $y$. This shows that $(p,p)\in \Sigma$ and the same argument yields that $(q,q)\in \Sigma$.
\end{proof}

\begin{proposition}
The genus of $\Sigma$ equals $g(\Sigma)=(g-1)(3g-2)$.
\end{proposition}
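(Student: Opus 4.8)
The plan is to identify the class of $\Sigma$ in $\mathrm{Pic}(C\times C)$ and then read off its (arithmetic) genus by adjunction, the last step being to check that for general data $\Sigma$ is smooth and irreducible, so that this agrees with its geometric genus. Concretely, I expect to prove the exact analogue of (\ref{eq:scorza_diagr}) for the limiting correspondence, namely
\[
\cO_{C\times C}(\Sigma)\cong \eta_C\boxtimes \eta_C(\Delta),
\]
equivalently $\Sigma\equiv (g-1)(F_1+F_2)+\Delta$, where $F_1,F_2$ denote the two ruling classes on $C\times C$. Note this is immediately consistent with Proposition \ref{prop:class_sigma}: it gives $\Sigma\cdot F_1=(g-1)+1=g$, and $\Sigma\cdot \Delta=2(g-1)+\Delta^2=2(g-1)+(4-2g)=2$, so that $\Sigma$ meets $\Delta$ in exactly the two transverse points $(p,p)$ and $(q,q)$.

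To establish the class I would invoke the description of the fibres of $\Sigma$ obtained in the proof of Proposition \ref{prop:class_sigma}. For a general $x\in C$ the fibre $\Sigma(x)$ is the zero divisor of the section $\sigma_x\in H^0\bigl(C,\eta_C(x)\bigr)$, so $\Sigma(x)\in \bigl|\eta_C(x)\bigr|$ and $\cO_{C\times C}(\Sigma)|_{\{x\}\times C}\cong \eta_C(x)$. Hence the line bundle $M:=\cO_{C\times C}(\Sigma)\otimes \mathrm{pr}_2^*\eta_C^{-1}\otimes \cO_{C\times C}(-\Delta)$ restricts to $\eta_C(x)\otimes \eta_C^{-1}\otimes\cO_C(-x)\cong\cO_C$ on every fibre $\{x\}\times C$; since the locus where $M$ is fibrewise trivial is closed and contains a dense open subset, it is all of $C$, and the see-saw principle shows $M\cong \mathrm{pr}_1^*L$ for some $L\in\mathrm{Pic}(C)$. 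Restricting $M$ to $C\times\{y\}$ and using that $\Sigma$ is symmetric (Proposition \ref{prop:class_sigma}) forces $L\cong\eta_C$, which yields the displayed formula. Incidentally this also shows $\Sigma$ is reduced, as it meets a general fibre in $g=\deg\eta_C(x)$ distinct points.

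It then remains a routine adjunction computation. Since $C$ has genus $g-1$, one has $K_{C\times C}\equiv (2g-4)(F_1+F_2)$, together with $F_1^2=F_2^2=0$, $F_1\cdot F_2=F_1\cdot\Delta=F_2\cdot\Delta=1$ and $\Delta^2=4-2g$. Substituting $\Sigma\equiv (g-1)(F_1+F_2)+\Delta$ into $p_a(\Sigma)=1+\frac{1}{2}\,\Sigma\cdot(\Sigma+K_{C\times C})$ gives $\Sigma\cdot(\Sigma+K_{C\times C})=6g^2-10g+2$, whence $p_a(\Sigma)=3g^2-5g+2=(g-1)(3g-2)$.

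The one point requiring genuine care, and the main obstacle, is the passage from arithmetic to geometric genus: one must show that for a general $[X=C/p\sim q,\eta]\in A_0$ the curve $\Sigma$ is smooth and irreducible. I would deduce this by degenerating to the boundary point from the generic smoothness of the smooth Scorza curves proved in \cite{FV}, combined with the transversality of $\Sigma$ and $\Delta$ at $(p,p)$ and $(q,q)$ recorded in Proposition \ref{prop:class_sigma}. Granting smoothness and irreducibility, the geometric genus of $\Sigma$ equals the arithmetic genus $(g-1)(3g-2)$ computed above.
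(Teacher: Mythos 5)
Your core argument is the paper's: pin down the class $\Sigma\equiv (g-1)(F_1+F_2)+\Delta$ using the fibre degree and the intersection with the diagonal from Proposition \ref{prop:class_sigma}, then apply adjunction; your numerics ($\Sigma\cdot(\Sigma+K_{C\times C})=6g^2-10g+2$, hence $p_a=(g-1)(3g-2)$) are correct. Your see-saw derivation of the stronger statement $\cO_{C\times C}(\Sigma)\cong\eta_C\boxtimes\eta_C(\Delta)$ is a mild refinement of the paper's purely numerical determination of the class, and it goes through. The one step that does not work as stated is your final paragraph: smoothness is \emph{not} inherited by specialization, so you cannot deduce smoothness of $\Sigma$ at the boundary point from the generic smoothness of $S(C,\eta)$ proved in \cite{FV} -- indeed the limit Scorza curve over $A_0$ is itself nodal, which illustrates exactly why semicontinuity runs the wrong way here. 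Fortunately this step is not needed for the proposition: the paper reads $g(\Sigma)$ off from adjunction exactly as in your main computation (i.e., as the arithmetic genus of the divisor of the given class), and the smoothness and irreducibility of $\Sigma$ are asserted separately in Proposition \ref{prop:limitA0} rather than established inside this proof.
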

\begin{proof} Use Proposition \ref{prop:class_sigma} to write $\Sigma\equiv (g-1)(F_1+F_2)+\Delta$ and  couple this with the adjunction formula.
\end{proof}

Note that $h^0(C, \eta_C)=1$ and write $\mbox{supp}(\eta_C)=\{x_1, \ldots, x_{g-1}\}$. We may assume that $p,q\notin \mbox{supp}(\eta_C)$. By Serre Duality, we find that $H^0(C, \eta_C(x_i-p-q))\neq 0$,  therefore $(x_i, p), (x_i, q)\in \Sigma$ for $i=1, \ldots, g-1$. By symmetry, we also have that $(p, x_i), (q, x_i)\in \Sigma$.

\vskip 4pt

We are in a position to describe the limiting Scorza curve for a general point of $A_0$.

\begin{proposition}\label{prop:limitA0}
The limit Scorza curve $S(X,\eta)$ corresponding to a general spin curve $[X,\eta]\in A_0$ is the irreducible stable curve obtained from the smooth curve $\Sigma\subseteq C\times C$ by identifying the following $2g-1$ pairs of points:
\[
\begin{tikzcd}[column sep=18pt]
(p, p)\sim (q, q) \ \mathrm{ and } \ (x_i, p)\sim (x_i, q), \ \ \ (p, x_i)\sim (q, x_i) \ \mathrm{ for } \ i=1, \ldots, g-1.
\end{tikzcd}
\]
\end{proposition}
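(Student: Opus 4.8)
The plan is to identify the limiting Scorza curve $S(X,\eta)$ as the pull-back $\nu\times\nu$ image described by $\Sigma$, together with the identifications forced by the gluing data defining $\eta$ on $X$. First I would recall that the limiting Scorza correspondence on $X^{(2)}$ (equivalently its pull-back to $X\times X$) is, by Proposition \ref{lemeta02} applied to the stable curve $X$, the unique member of $|\cL_\eta(\delta)|$, and that its normalization is obtained by pulling back under $\nu\times\nu$ to the smooth surface $C\times C$. The preceding propositions already show that this pull-back is precisely the smooth curve $\Sigma$ of class $(g-1)(F_1+F_2)+\Delta$ and genus $(g-1)(3g-2)$. So the essential content of the statement is to show that passing from $\Sigma\subseteq C\times C$ to the actual limit curve $S(X,\eta)$ in $X\times X$ glues exactly the $2g-1$ listed pairs of points, and that the resulting curve is irreducible and stable.

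Next I would account systematically for the points of $\Sigma$ lying over the diagonal node $u=\nu(p)=\nu(q)$ and over the ``seam'' fibres $\{p\}\times C$, $\{q\}\times C$, $C\times\{p\}$, $C\times\{q\}$ in $X\times X$. The map $\nu\times\nu$ is an isomorphism away from these seams, so the only identifications to be made in the image are among preimages of points on them. By Proposition \ref{prop:class_sigma}, $\Sigma$ meets the diagonal $\Delta$ exactly at $(p,p)$ and $(q,q)$; since both map to the diagonal point $(u,u)\in X\times X$, these must be glued, giving the identification $(p,p)\sim(q,q)$. For the remaining seams I would use the Serre-duality observation that $H^0(C,\eta_C(x_i-p-q))\neq 0$ for each point $x_i$ in $\mathrm{supp}(\eta_C)=\{x_1,\dots,x_{g-1}\}$, which places $(x_i,p),(x_i,q),(p,x_i),(q,x_i)$ on $\Sigma$; since $\nu(p)=\nu(q)=u$, the pairs $(x_i,p)$ and $(x_i,q)$ have the same image in $X\times X$, forcing $(x_i,p)\sim(x_i,q)$, and symmetrically $(p,x_i)\sim(q,x_i)$. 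I would then verify that these are the only coincidences: a point $(x,p)\in\Sigma$ with $x\notin\mathrm{supp}(\eta_C)\cup\{p,q\}$ corresponds to a section of $\eta_C(x-p)$ vanishing at $q$, i.e.\ $H^0(C,\eta_C(x-p-q))\neq0$, which by $\deg=g-3$ and generality happens exactly when $x\in\mathrm{supp}(\eta_C)$, so no further seam points occur.

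With the gluing pinned down, irreducibility follows because $\Sigma$ is irreducible (the general fibre of $\Sigma\to C$ is connected as computed in Proposition \ref{prop:class_sigma}) and gluing finitely many pairs of smooth points of an irreducible curve keeps it irreducible. Stability I would check by confirming that the only singularities introduced are the $2g-1$ ordinary nodes from the listed identifications—so $S(X,\eta)$ is nodal—and that no rational component with fewer than three special points is created, which holds since $\Sigma$ has no rational components of low valence and the nodes are imposed on an already stable smooth model. A genus count provides a consistency check: the $2g-1$ gluings raise the arithmetic genus of $\Sigma$ from $(g-1)(3g-2)$ by $2g-1$, and this should match the expected limiting value $1+3g(g-1)$ of $g(S(C,\eta))$, which I would verify as $(g-1)(3g-2)+(2g-1)=3g^2-5g+2+2g-1=3g^2-3g+1=1+3g(g-1)$.

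The main obstacle I anticipate is the precise matching of the abstract limit in $|\cL_\eta(\delta)|$ on the singular curve $X$ with the concrete normalization $\Sigma$ and the gluing data: one must argue carefully that the sheaf $\cL_\eta(\delta)$ on $X^{(2)}$ (defined via the locally free $\eta$, which encodes the chosen isomorphism $\eta_{C|p}\cong\eta_{C|q}$) restricts along the seams to impose exactly the condition $\sigma_{x,y}(p)=\sigma_{x,y}(q)$ used to define $\Sigma_0$, and hence that it is this identification—and not its twist by the gluing scalar—that produces the correct nodes. Making this compatibility rigorous, rather than merely dimension-counting, is the delicate point; the rest is a bookkeeping of seam preimages combined with adjunction and the Stable Reduction framework already invoked in the proof of Theorem \ref{propthetanull2limit}.
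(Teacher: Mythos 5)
Your proposal is correct in substance and follows the same overall skeleton as the paper's proof: take $\Sigma\subseteq C\times C$ as the normalization of the limit correspondence, observe that $\nu\times\nu$ is an isomorphism away from the seams, and determine which seam points get glued. The one place you genuinely diverge is in showing that the listed identifications are the \emph{only} ones. The paper does this by a degree count: Proposition \ref{prop:class_sigma} gives $\Sigma\cdot \mathrm{pr}_1^{-1}(p)=g$, and since the $g$ points $(p,p),(p,x_1),\ldots,(p,x_{g-1})$ have already been exhibited in that fibre, they must exhaust it (and likewise over $q$ and for $\mathrm{pr}_2$). You instead characterize the seam points directly by the condition $H^0\bigl(C,\eta_C(x-p-q)\bigr)\neq 0$. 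That route works, but your stated justification --- ``by $\deg=g-3$ and generality'' --- is both off by one (the degree is $g-2$, since $\deg\eta_C=g-1$) and not quite the right reason: what makes the characterization exact is Serre duality, namely $h^0\bigl(\eta_C(x-p-q)\bigr)=h^1\bigl(\eta_C(x-p-q)\bigr)=h^0\bigl(\eta_C(-x)\bigr)$ using $\omega_C(p+q)\cong\eta_C^{\otimes 2}$ and Riemann--Roch on the genus $g-1$ curve $C$, so the condition holds if and only if $x\in\mathrm{supp}(\eta_C)$; generality alone would not identify the finite exceptional set. With that fix your argument is complete and arguably more self-contained, while the paper's valence count has the advantage of also controlling the multiplicities in the fibre without any cohomological case analysis. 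Two smaller points: Proposition \ref{lemeta02} is stated only for smooth spin curves, so invoking it ``applied to the stable curve $X$'' is not literally licensed --- the paper instead defines $\Sigma$ as the pull-back of the limit correspondence and identifies it with the closure of $\Sigma_0$; and your genus consistency check and the irreducibility/stability remarks are correct additions that the paper records only as a comment after the statement.
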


Note that the genus of the resulting curve is, as it should be,
$$p_a \bigl(S(X,\eta)\bigr)=g(\Sigma)+2(g-1)+1=1+3g(g-1).$$
\vskip 3pt
\begin{proof}
We already explained that $(\nu\times \nu)^{-1}\bigl(S(X,\eta)\bigr)=\Sigma$, therefore $S(X,\eta)$ is obtained from $\Sigma$ by possibly identifying pairs of points of the form $(x,p)$ and $(x,q)$ (respectively $(p,x)$ and $(q,x)$). Since, by Proposition \ref{prop:class_sigma}, the valence of $\Sigma$ equals $g$ and we have already exhibited $g$ distinct points lying in the fibre of $\Sigma$ over the point $p\in C$ (respectively $q$), we find that $\Sigma \cdot \mathrm{pr}_1^{-1}(p)=(p,p)+(p,x_1)+\cdots+(p,x_{g-1})$ and $\Sigma \cdot \mathrm{pr}_1^{-1}(q) =(q,q)+(q,x_1)+\cdots+(q,x_{g-1})$, respectively. A similar statement holds with respect to the projection $\mathrm{pr}_2$. It thus follows that these pairs of points get identified, which brings the proof to an end.
\end{proof}

\subsection{The limit Scorza curve for a general point of $B_0$.}

We begin with a general element $[X, \eta]$ of $B_0$, corresponding to a nodal curve $X:=C\cup_{\{p, q\}} E$, where $E$ is a smooth rational curve meeting $C$ at $p$ and $q$, as well as to a line bundle $\eta\in \mbox{Pic}^{g-1}(X)$, whose restrictions to the components of $X$ are an even theta characteristic $\eta_C := \eta_{| C}$ on $C$, respectively, $\eta_{|E}=\cO_E(1)$. Note that $[C, p,q]\in \cM_{g-1,2}$ may be assumed to be general.
We consider the decomposition of the cartesian product
\[
X \times X = C\times C \ \cup \ C\times E\ \cup \ E\times C\ \cup E\ \times E,
\]
where the points $p$ and $q$ are thought of as lying on both $C$ and $E$.
In $C\times C$ one considers  the Scorza curve $S(C, \eta)$ which is smooth of genus $1+3(g-1)(g-2)$. By semicontinuity, $S(C, \eta)$ appears as a component of the limit Scorza curve. We then consider the pencil
$|\eta_C(p+q)| \in W^1_g(C)$ inducing a regular map $f\colon C\rightarrow E = \PP^1$ and an embedding
$$(1, f)\colon C\longrightarrow C\times E$$ whose image we denote by $C_1$.
Permuting the factors, we define $C_2\subseteq E\times C$ to be the image of the map $(f, 1)\colon C\rightarrow E\times C$.

\vskip 4pt

We set $S(C, \eta_C)_p=\{x_1, \ldots, x_{g-1}\}$ and $S(C, \eta_C)_q=\{x_1', \ldots, x_{g-1}'\}$. We normalize the map $f$ in such a way that
\begin{equation}\label{eq:identifB0}
f(x_1)=\cdots =f(x_{g-1})=f(q)=p\in E \ \mbox{ and } \ f(x_1')=\cdots=f(x_{g-1}')=f(p)=q\in E.
\end{equation}

We identify the points $(x_i, p)\in S(C, \eta_C)$ and $(x_i, f(x_i))=(x_i, p)\in C_1$, as well as the points $(x_i', q)\in S(C,\eta_C)$ and,
respectively, $(x_i',f(x_i'))=(x_i',q)\in C_1$ for $i=1, \ldots, g-1$, where we use (\ref{eq:identifB0}) throughout.  Similarly, we identify the points $(p, x_i)\in S(C, \eta_C)$ and $(f(x_i),x_i)=(p,x_i)\in C_2$, as well as the pairs of points $(q,x_i')\in S(C,\eta_C)$ and $(f(x_i'),x_i')=(q,x_i')\in C_2$ for $i=1,\ldots, g-1$, respectively. Therefore $S(C,\eta_C)$ meets transversally both $C_1$ and $C_2$ at $2g-2$ points. Finally the points
%\begin{equation}\label{eq:identifB02}
\begin{align}\label{eq:identifB02}
(f(q),q)=(p,q)\in C_2,\ \  \  (f(p),p)=(q,p)\in C_2\\
(p,f(p))=(p,q)\in C_1, \ \  \ (q,f(q))=(q,p)\in C_1
\end{align}
%\end{equation}
can also be identified in pairs, thus $C_1$ and $C_2$ meet transversally in two points denoted by $(p,q)$, respectively, $(q,p)$.

\begin{figure}[!h]
\begin{center}
\hspace{2cm}
\includegraphics[width=8cm, height=6cm]{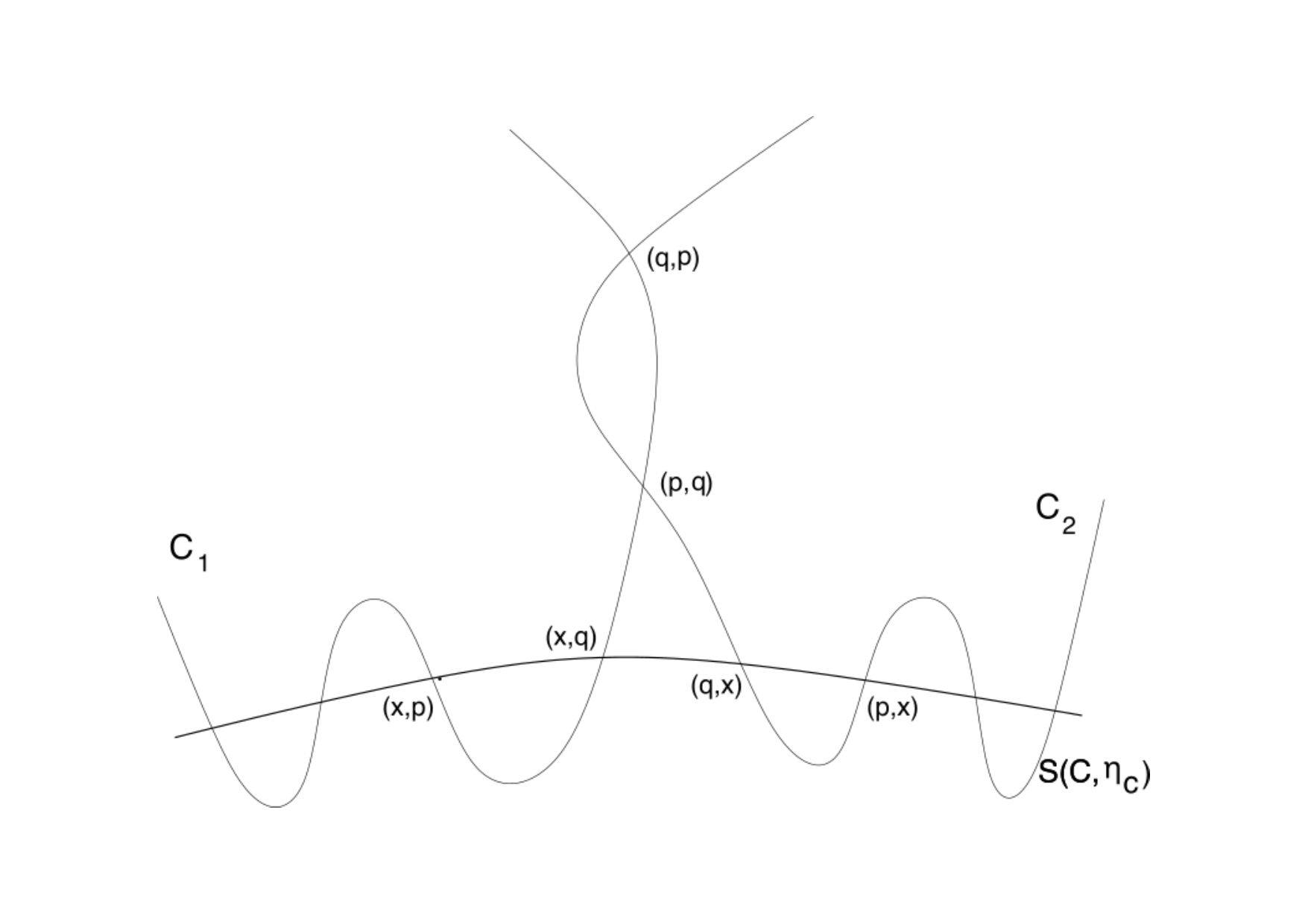}
\caption{The Scorza curve corresponding to a general point of $B_0$}
\end{center}
\end{figure}

%\begin{theorem}
%The action of the Scorza map at the level of Picard groups $\mathfrak{Sc}^*:\mathrm{Pic}(\mm_{g'})\rightarrow \mathrm{Pic}(\ss_g^+)$ is:
%$$\mathfrak{Sc}^*(\delta_0')=(4g-2)\cdot \beta_0+(2g-1)\cdot \alpha_0+?\Theta_{\mathrm{null}}+D,$$
%where $D$ is the divisor of spin curves for which the Scorza curve is singular at a single point and whose class we have computed in principle, still to write up.
%
%\end{theorem}

\begin{proposition}\label{prop:B0}
The Scorza curve corresponding to a general point of $B_0$ is the stable curve
$$S(C, \eta_C) \ \cup C_1 \ \cup \ C_2,$$
where its components meet along the identifications described by (\ref{eq:identifB0}) and (\ref{eq:identifB02}).
\end{proposition}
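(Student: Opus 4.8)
The plan is to determine the limit set-theoretically on each of the four pieces of $X\times X=(C\times C)\cup(C\times E)\cup(E\times C)\cup(E\times E)$ by analyzing when $h^0\bigl(X,\eta(x-y)\bigr)\neq 0$, and then to confirm that the reduced curve so obtained is the \emph{entire} flat limit by matching its arithmetic genus with $1+3g(g-1)$ and checking stability. Throughout I would use the normalization (Mayer--Vietoris) sequence relating a section of $\eta(x-y)$ on $X$ to a pair consisting of a section on $C$ and a section on $E$ that agree at the two nodes $p,q$, together with $h^0(C,\eta_C)=0$ and the fact that $\cO_E(1)$ interpolates arbitrary values at $p$ and $q$.

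First I treat the four pieces. On $C\times C$, a section of $\eta_C(x-y)$ always extends across $E$ because $\cO_E(1)$ realizes the prescribed boundary values, so the locus is exactly $S(C,\eta_C)$; conversely, for $(x,y)\notin S(C,\eta_C)$ the interpolation forces $h^0\bigl(X,\eta(x-y)\bigr)=0$. On $E\times E$ the restriction to $C$ is $\eta_C$ with $h^0=0$, while $\eta(x-y)|_E\cong\cO_E(1)$ evaluates isomorphically onto $\bC^2$ at $p,q$; hence $h^0\bigl(X,\eta(x-y)\bigr)=0$ generically and there is no component there. On $C\times E$ (and symmetrically on $E\times C$), for general $x$ one has $h^0(C,\eta_C(x))=1$ with generator $\sigma_x$, and a nonzero global section on $X$ forces the $E$-component to be a nonzero constant; matching at the nodes then pins down a unique $z\in E$ for each $x$, so this locus is the graph of a morphism $C\to E$.

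It remains to identify this morphism with $f$ and to read off the gluing. The key input is equation (\ref{eq:scorza_diagr}): restricting $\cO_{C\times C}\bigl(S(C,\eta_C)\bigr)\cong \eta_C\boxtimes\eta_C(\Delta)$ to the fibre $\{p\}\times C$ gives $\eta_C(p)$, so the fibre $S(C,\eta_C)_p=\{x_1,\dots,x_{g-1}\}$, viewed as a divisor on $C$, lies in $|\eta_C(p)|$, i.e.\ $x_1+\cdots+x_{g-1}\sim\eta_C+p$. Consequently $x_1+\cdots+x_{g-1}+q\in|\eta_C(p+q)|$ is a single fibre of $f$, which is precisely the content of the normalization (\ref{eq:identifB0}): the morphism $C\to E$ above \emph{is} $f$, with $f^{-1}(p)=\{x_1,\dots,x_{g-1},q\}$ and $f^{-1}(q)=\{x_1',\dots,x_{g-1}',p\}$, each of degree $g$. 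The graph $C_1$ therefore meets $S(C,\eta_C)$ over the node $p$ in the $g-1$ points $(x_i,p)$ and over $q$ in the $g-1$ points $(x_i',q)$, while $f(p)=q$ and $f(q)=p$ place $C_1$ through the node--node points $(p,q)$ and $(q,p)$, where it meets $C_2$; this yields exactly the identifications (\ref{eq:identifB0}) and (\ref{eq:identifB02}). Finally, summing the genera $1+3(g-1)(g-2)$, $g-1$, $g-1$ of the three smooth components and adding the $(2g-2)+(2g-2)+2=4g-2$ nodes gives arithmetic genus $1+3g(g-1)$, matching the genus of a smooth Scorza curve; since no multiplicities or extra components are left over, the reduced curve $S(C,\eta_C)\cup C_1\cup C_2$ is the whole limit, and it is stable.

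The main obstacle is the identification on $C\times E$: naively counting the zeros of the boundary value $\sigma_x(p)$ suggests a degree-$(g-1)$ map, whereas the correct morphism $f$ has degree $g$, the discrepancy being accounted for by the node $x=q$ (where $\sigma_x$ does not vanish at $p$ but the $E$-section degenerates). Getting this right is exactly what the linear equivalence $x_1+\cdots+x_{g-1}\sim\eta_C+p$ supplies, and it is also what makes the normalization (\ref{eq:identifB0}) internally consistent, namely that the $g-1$ points $x_i$ genuinely lie on one fibre of the pencil. The remaining technical points---transversality of the three types of intersection and reducedness of the flat limit (in contrast with the non-reduced limit $2\Delta+\widetilde{\Gamma}_{\eta}$ encountered along $\Theta_{\mathrm{null}}$)---I would settle by the genus bookkeeping above together with a local analysis of the total space of a smoothing family, exactly as in the proof of Theorem \ref{propthetanull2limit}.
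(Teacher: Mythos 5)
Your overall strategy coincides with the paper's: decompose $X\times X$ into the four pieces, run the Mayer--Vietoris sequence on each, conclude that $E\times E$ contributes nothing, that $C\times C$ contributes exactly $S(C,\eta_C)$, and that $C\times E$ and $E\times C$ each contribute the graph of a morphism $C\to E$. Those steps match the paper's proof. Where you genuinely diverge is in identifying that graph with $C_1$. The paper does this pointwise: if $f(x)=f(x')$, so that $h^0\bigl(C,\eta_C(x+x'-p-q)\bigr)\geq 1$, it writes $H^0\bigl(C,\eta_C(x+x')\bigr)=\langle\sigma_C,\sigma_C'\rangle$ and observes that $\bigl[\sigma_C(p),\sigma_C'(p)\bigr]=\bigl[\sigma_C(q),\sigma_C'(q)\bigr]$, so the \emph{same} $\sigma_E$ glues to both sections and hence $y(x)=y(x')$; thus $x\mapsto y(x)$ is constant on the fibres of $f$. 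You instead argue through the divisor class of the fibre over the node, via $S(C,\eta_C)_p\in|\eta_C(p)|$. That linear equivalence is correct, and it is in fact silently needed for the paper's normalization (\ref{eq:identifB0}) to be consistent; your closing genus count is likewise a worthwhile check that the paper records only in the analogous statements for $A_i$ and $B_i$.

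However, your identification of the morphism with $f$ is not closed as written. Knowing that $x_1+\cdots+x_{g-1}+q$ is a fibre of $f$ does not yet give $\psi=f$ for the morphism $\psi\colon x\mapsto y(x)$ produced by the gluing. You would still need (i) that the scheme-theoretic fibre $\psi^{-1}(p)$ is exactly $x_1+\cdots+x_{g-1}+q$ --- in particular that $q$ appears with multiplicity one and nothing else does --- which is precisely the delicate boundary behaviour at the nodes that you defer to a local analysis of a smoothing family (the paper settles it by blowing up $\cX$ at the node and rerunning Mayer--Vietoris on the new central fibre $C\cup E\cup E'$); and (ii) the remark that $h^0\bigl(C,\eta_C(p+q)\bigr)=2$, so that the pencil cutting out the fibres of $\psi$ is forced to be the complete, hence unique, pencil $|\eta_C(p+q)|$ once a single fibre is known. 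Neither point is fatal, but (i) cannot be replaced by the genus bookkeeping: the flat limit has arithmetic genus $1+3g(g-1)$ automatically, so the count only confirms that no components or multiplicities are missing from a correctly identified support, not that the component in $C\times E$ is the graph of $f$ rather than of some other degree-$g$ cover.
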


\begin{proof}
Let $\bigl(\varphi \colon \cX\rightarrow (B, 0), \eta_{\cX}\bigr)$ be a one-dimensional family of stable spin curves with
$$
\begin{tikzcd}[column sep=28pt]
\bigl(\varphi^{-1}(0), \ \eta_{\cX| \varphi^{-1}(0)}\bigr)=\bigl(C\cup_{\{p,q\}} E, \eta \bigr)
\end{tikzcd}$$ as before, and such that $\bar{x}, \bar{y}\colon B\rightarrow \cX$ are sections of $\varphi$ with $h^0\bigl(X_b, \eta_b(\bar{x}(b)-\bar{y}(b))\bigr)\geq 1$ for each $b\in B$, where $\eta_b:=\eta_{| X_b}$. Set $\overline{x}(0)=:x$ and $\overline{y}(0)=:y$.

\vskip 3pt

If $x, y\in C\setminus \{p,q\}$, then using the isomorphism $H^0\bigl(X, \eta(x-y)\bigr)\cong H^0\bigl(C, \eta_C(x-y)\bigr)$, we obtain that $(x,y)\in S(C,\eta_C)$, which accounts for this component in Proposition \ref{prop:B0}. Let us assume now that $x\in C\setminus \{p,q\}$ and that $y\in E\setminus \{p,q\}$. Writing down the Mayer-Vietoris sequence on $X$ we obtain:

$$0\lra H^0\bigl(X, \eta(x-y)\bigr)\lra H^0\bigl(C, \eta_C(x)\bigr)\oplus H^0\bigl(E, \cO_E(1)(-y)\bigr)\stackrel{\mathrm{ev}_{p,q}}\lra \mathbb C^2_{p,q}.$$
Therefore there exists a section $0\neq \sigma_C\in H^0\bigl(C, \eta_C(x)\bigr)$ and a section $\sigma_E\in H^0\bigl(E, \cO_E(1)\bigr)$ uniquely determined by the conditions $\sigma_E(p)=\sigma_C(p)$ and $\sigma_E(q)=\sigma_C(q)$. The point $y\in E$ is the unique zero of $\sigma_E$, that is, $x$ determines $y$. Assume now that $x, x'\in C$ are two distinct points such that $f(x)=f(x')\in E$, that is,

$$h^0\bigl(C, \eta_C(p+q-x-x')\bigr)=h^0\bigl(C, \eta_C(x+x'-p-q)\bigr)\geq 1.$$
We now argue that if $y,y'\in E$ are such that $H^0\bigl(X, \eta(x-y)\bigr)\neq 0$ and $H^0\bigl(X, \eta(x'-y')\bigr)\neq 0$, then $y=y'$, which will show that $C_1$ is the component of the limit Scorza curve lying in $C\times E$. We may write $H^0\bigl(C, \eta_C(x+x')\bigr)=\langle \sigma_C, \sigma_C'\rangle$, where $H^0\bigl(C, \eta_C(x)\bigr)\cong \mathbb C\cdot \sigma_C$ and $H^0\bigl(C, \eta_C(x')\bigr)\cong \mathbb C\cdot \sigma_C'$. By assumption
$$\bigl[\sigma_C(p), \sigma_C'(p)\bigr]=\bigl[\sigma_C(q), \sigma_C'(q)\bigr]\in E\cong \PP^1.$$
But this is precisely the condition that the \emph{same} section $\sigma_E\in H^0\bigl(E, \cO_E(1)\bigr)$ glues to both $\sigma_C$ and to $\sigma_C'$ at the points $p$ and $q$, thus showing that $y=y'$. A similar argument shows that the curve $C_2\subseteq E\times C$ is contained in the limit of the Scorza correspondence.

\vskip 4pt

Via a similar Mayer-Vietoris sequence, one shows that the case when at least one of the points $x, y$ specializes to $E\setminus \{p,q\}$ does not occur. When, on the other hand, $x$ specializes to $p$ and $y$ specializes to $q$, by blowing-up $\cX$ at the points $p$ and $q$, inserting exceptional divisors and writing down the Mayer-Vietoris sequence on the resulting central fibre, we easily concludes that this case is possible and corresponds to the intersection of $C_1$ and $C_2$ at the points $(p,q)$ and $(q,p)$.

Finally, we discuss the limit Scorza curve along the intersection of $C\times C$ and $C\times E$ (respectively, $E\times C$). In this case, we may assume, without loss of generality, that $y=p$ and $x\in C\setminus \{p,q\}$. We blow-up $\cX$ at $p$ and denote the exceptional divisor $E'$. Denoting the proper transforms of $C$ and $E$ by the same symbols, we consider the new central fibre
$$X'=C\cup E \cup E', \ \  \{p'\} := E\cap E', \ \ E\cap C=\{q\}, \ \ E'\cap C=\{p\},$$
where $y\in E'\setminus \{p,p'\}$ and the spin structure $\eta_{X'}$ is defined as the pull-back of $\eta$. From the exact sequence
$$0\lra H^0\bigl(\eta_{X'}(x-y)\bigr)\lra H^0\bigl(C, \eta_C(x)\bigr)\oplus H^0\bigl(E, \cO_E(1)\bigr)\oplus H^0\bigl(E', \cO_{E'}(-y)\bigr)\stackrel{\mathrm{ev}}\lra \mathbb C^3_{p,p',q},$$

we obtain that necessarily $H^0\bigl(C, \eta_C(x-p)\bigr)\neq 0$, that is, $x\in \{x_1, \ldots, x_{g-1}\}$, in which case the point $(x_i, p)\in S(C,\eta_C)$ can be identified with the point $(x_i,f(x_i))=(x_i,p)\in C_1$, where we have used (\ref{eq:identifB0}). This completes the proof.

\end{proof}

\subsection{The limit Scorza curve at a general point of $A_i$, when $i \geq 1$.}
The general point of the boundary divisor $A_i$ of $\ss_g^+$ corresponds to general $1$-pointed curves $[C, p]\in \cM_{i, 1}$ and $[D, q]\in \cM_{g-i, 1}$ together with \emph{even} theta-characteristics $\eta_C$ and $\eta_D$ on $C$ and $D$, respectively. To this data we associate the even spin curve of genus $g$

\begin{equation}\label{eq:genpointAi}
\begin{tikzcd}[column sep=22pt]
\bigl[X=C\cup E\cup D, \eta_{|C}\cong \eta_C,\  \ \eta_{|E}\cong \cO_E(1), \ \ \eta_{|D}\cong \eta_D\bigr]\in A_i,
\end{tikzcd}
\end{equation}
where $E$ is a smooth rational curve meeting $C$ at the point $p$ and $D$ at the point $q$.
We fix such a general point of $A_i$.
%Since in what follows we will suppress the rational component $E$ and the points $p$ and $q$ get identified, to simplify notation we write $p=q\in C\cap D$.

\vskip 3pt

We consider the Scorza curves $S(C, \eta_C)\subseteq C\times C$ and $S(D, \eta_D)\subseteq D\times D$. Using \cite[Theorem 4.1]{FV}, we may  assume that the curves $S(C,\eta_C)$ and $S(D,\eta_D)$ are smooth, and then of course $g\bigl(S(C,\eta_C)\bigr)=1+3i(i-1)$ and $g\bigl(S(D,\eta_D)\bigr)=1+3(g-i)(g-i-1)$.

Using Definition \ref{def:limitscorza}, it easily turns out that $S(C,\eta_C)$ (respectively $S(D,\eta_D)$) is the component of the limit Scorza curve lying in the component $C\times C$ (respectively $D\times D$)
of the cartesian product of the genus $g$ stable curve given by (\ref{eq:genpointAi}). We are left with determining the limit Scorza components in $C\times D$ and $D\times C$.

\vskip 4pt

Put $\{x_1, \ldots, x_i\}:=S(C, \eta_C)_p$ and, similarly, $\{y_1, \ldots, y_{g-i}\}:=S(D, \eta_D)_q$. For a general choice (\ref{eq:genpointAi}) of a point in $A_i$ the points $\{x_{\ell}\}_{\ell=1}^i$ and $\{y_k\}_{k=1}^{g-i}$ are pairwise distinct.

\vskip 4pt

Inside $C\times D$ we consider the ``vertical" copies $\{x_{\ell}\}\times D$, where $\ell=1, \ldots, i$, as well as the ``horizontal" copies $C\times \{y_k\}$ for $k=1, \ldots, g-i$. These intersect at the $i(g-i)$ points $(x_{\ell}, y_k)\in C\times D$. The point $(x_{\ell}, q)$ of the ``vertical copy" $\{x_{\ell}\}\times D\subseteq C\times D$ gets identified with the point $(x_{\ell}, p)$ of the curve $S(C,\eta_C)\subseteq C\times C$. The point $(p, y_k)$ of the  ``horizontal copy" $C\times \{y_k\}$ gets identified with the point $(q, y_k)$ of the curve $S(D, \eta_D) \subseteq D\times D$.

\vskip 4pt

By symmetry, inside $D\times C$ we consider the ``vertical" curves $\{y_k\}\times C$ for $k=1, \ldots, g-i$, as well as the ``horizontal" curves
$\{x_{\ell}\}\times D$ for $\ell=1, \ldots, i$, which intersect in the $i(g-i)$ points $(y_k, x_{\ell})\in D\times C$. The marked point $(y_k, p)$ of each component $\{y_k\}\times C$ is then identified with the marked point $(y_k,q)$ of the Scorza curve $S(D, \eta_D)$. Finally, the marked point $(x_{\ell}, q)$ of each component $\{x_{\ell}\}\times D$ is identified with the marked point $(x_{\ell}, p)$ of the Scorza curve $S(C,\eta_C)$.

\vskip 3pt

We are now in a position to describe the limiting Scorza curve for  a general point in $A_i$:

\begin{proposition}\label{prop:Ai}
The limit Scorza curve corresponding to a general point (\ref{eq:genpointAi}) of the boundary divisor $A_i$ is the following stable curve:

\begin{align*}
S(C,\eta_C) \ \ \bigcup \ \ \Bigl(\bigl(\{x_{1}\}\times D\bigr)\cup \ldots \cup \bigl(\{x_i\}\times D\bigr) \  \cup \ \bigl(C\times \{y_1\}\bigr)\cup \ldots \cup \bigl(C\times \{y_{g-i}\}\bigr)\Bigr)\  \bigcup\\
S(D, \eta_D) \ \ \bigcup \ \
\Bigl(\bigl(D\times \{x_1\}\bigr)\cup \ldots \cup \bigl(D\times \{x_i\}\bigr) \cup \bigl(\{y_1\}\times C\bigr)\cup \ldots \cup \bigl(\{y_{g-i}\}\times C\bigr)\Bigr).
\end{align*}

\begin{figure}[!h]
\begin{center}
\hspace{2cm}
\includegraphics[width=8cm, height=7cm]{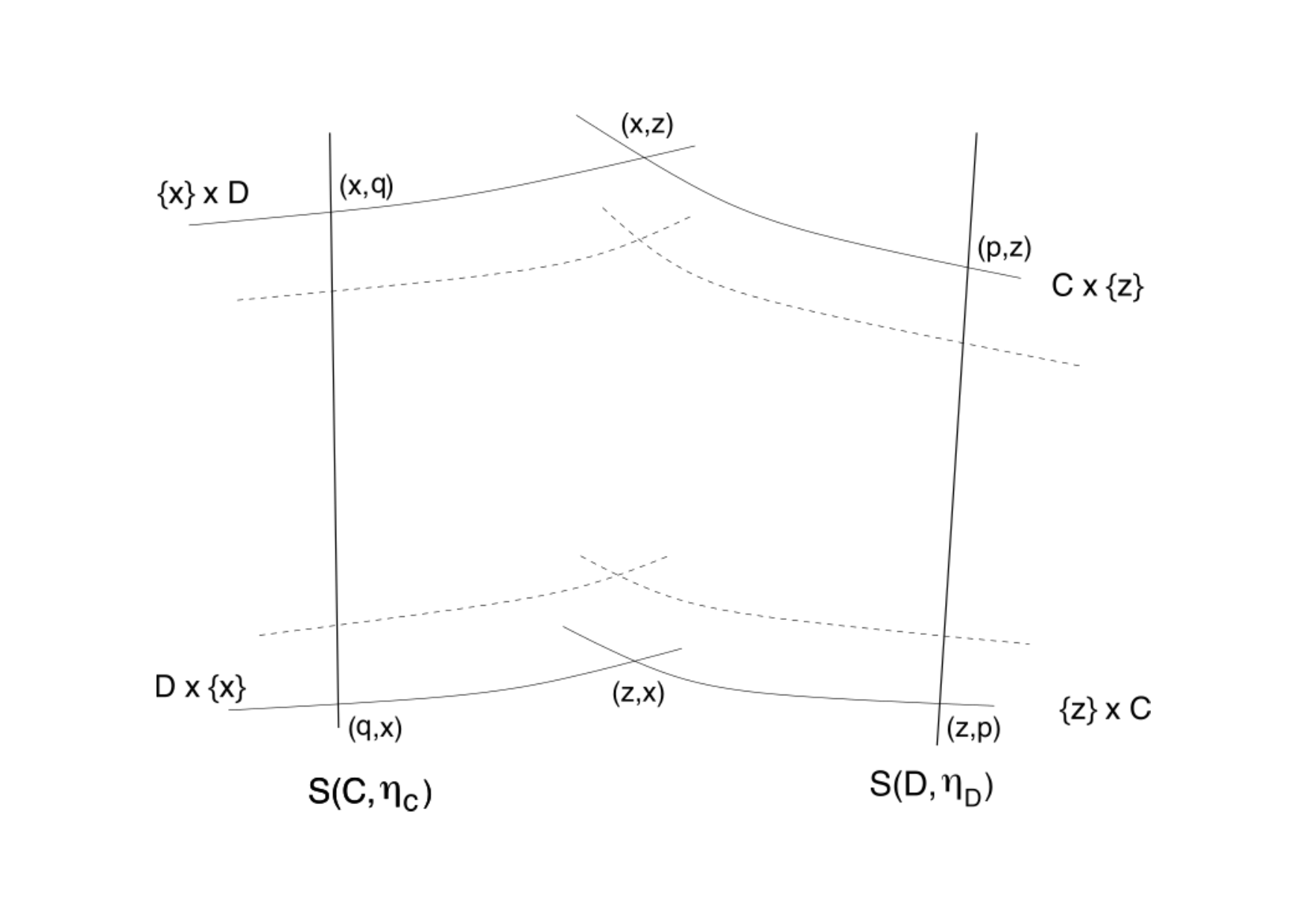}
\caption{The Scorza curve corresponding to a general point of $A_i$}
\end{center}
\end{figure}

This decomposition corresponds to the components $C\times C$, $C\times D$, $D\times D$ and $D\times C$ (in this order) of the cartesian product.
\end{proposition}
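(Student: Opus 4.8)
The plan is to follow the degeneration method used in Propositions \ref{prop:limitA0} and \ref{prop:B0}: fix a one-parameter smoothing $\varphi\colon \cX\to (B,0)$ of the spin curve (\ref{eq:genpointAi}), equipped with sections $\bar x,\bar y\colon B\to \cX$ satisfying $h^0\bigl(X_b,\eta_b(\bar x(b)-\bar y(b))\bigr)\geq 1$ for $b\neq 0$, and determine the possible limits $(\bar x(0),\bar y(0))$ according to which component of $X=C\cup E\cup D$ the two points specialize to. Since the limiting correspondence lives in $X\times X$, it suffices to analyze its restriction to the four ``corner'' components $C\times C$, $D\times D$, $C\times D$ and $D\times C$, and to check that the rational bridge $E$ forces the remaining strips of $X\times X$ to contract in the stable model. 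First I would dispose of the diagonal blocks: when $x,y\in C\setminus\{p\}$, the canonical isomorphism $H^0\bigl(X,\eta(x-y)\bigr)\cong H^0\bigl(C,\eta_C(x-y)\bigr)$ shows that the limiting condition is exactly $(x,y)\in S(C,\eta_C)$; invoking Definition \ref{def:limitscorza} and the smoothness of $S(C,\eta_C)$ from \cite[Theorem 4.1]{FV}, this block of the limit is $S(C,\eta_C)\subseteq C\times C$, and symmetrically $S(D,\eta_D)\subseteq D\times D$.

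The crux is the off-diagonal block $C\times D$. Here $\Delta\cap(C\times D)=\emptyset$, so the naive condition $h^0\bigl(X,\eta(x-y)\bigr)\neq 0$ holds on \emph{all} of $C\times D$: a Mayer--Vietoris count on $X=C\cup E\cup D$ gives a kernel of dimension at least $1+2+0-2=1$ for every $(x,y)$, using $h^0\bigl(C,\eta_C(x)\bigr)=1$, $h^0\bigl(E,\cO_E(1)\bigr)=2$ and $h^0\bigl(D,\eta_D(-y)\bigr)=0$. Hence, unlike in the cases $A_0$ and $B_0$, the semicontinuity locus does \emph{not} cut out the limit, and I would instead compute it directly from the universal sheaf of Definition \ref{def:limitscorza}. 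The key point is that, after the base change and the blow-ups along the node strata of $\cX\times_B\cX$ required for stable reduction, the restriction of the limiting line bundle $\cL_{\eta_g}(\delta_g)$ to the component $C\times D$ acquires twists by the exceptional divisors meeting it over the two nodes, so that it becomes the box product
$$\eta_C(p)\boxtimes \eta_D(q)\in \mathrm{Pic}(C\times D),$$
the degree of the bridge sheaf $\eta_{|E}\cong\cO_E(1)$ being absorbed into the attaching points $p$ and $q$.

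Granting this, the conclusion follows cleanly. Since $\eta_C,\eta_D$ are ineffective and $\omega_C\otimes\eta_C^{\vee}\cong\eta_C$, Riemann--Roch gives $h^0\bigl(C,\eta_C(p)\bigr)=h^0\bigl(D,\eta_D(q)\bigr)=1$, so Künneth yields
$$H^0\bigl(C\times D,\ \eta_C(p)\boxtimes\eta_D(q)\bigr)=\langle \tau_C\boxtimes\tau_D\rangle,$$
where $\tau_C,\tau_D$ denote the unique sections of $\eta_C(p)$ and $\eta_D(q)$. As the limit section cannot vanish identically on $C\times D$ (the stable limit is connected of the prescribed arithmetic genus, and $S(C,\eta_C)$, $S(D,\eta_D)$ sit in different corners), it must be a multiple of $\tau_C\boxtimes\tau_D$, whose zero locus is exactly $\bigl(\bigcup_{\ell}\{x_\ell\}\times D\bigr)\cup\bigl(\bigcup_{k}C\times\{y_k\}\bigr)$. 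Here I would also record $\mathrm{div}(\tau_C)=S(C,\eta_C)_p=\{x_1,\dots,x_i\}$, which follows from the equivalence $h^0\bigl(\eta_C(x-p)\bigr)\neq 0\iff h^0\bigl(\eta_C(p-x)\bigr)\neq 0\iff \tau_C(x)=0$, and symmetrically for $\tau_D$. The block $D\times C$ is treated identically.

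It then remains to glue: matching the intersections of the rulings with the diagonal blocks along the node strata produces the identifications $(x_\ell,q)\sim(x_\ell,p)$ and $(p,y_k)\sim(q,y_k)$ stated in the proposition, precisely as the bridge through $E$ is contracted, and an arithmetic-genus count over the $2g+2$ components and their nodes confirms $p_a=1+3g(g-1)$. The main obstacle is the twist computation identifying the limiting line bundle on $C\times D$ as $\eta_C(p)\boxtimes\eta_D(q)$: this demands controlling the stable reduction of the surface $\cX\times_B\cX$ near the corner where the two node strips of $X\times X$ cross and the central fiber fails to have normal crossings, which is the genuinely new feature compared with the divisors $A_0$ and $B_0$ handled above.
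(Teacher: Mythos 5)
Your treatment of the two diagonal blocks is correct and coincides with the paper's, and your diagnosis of the real difficulty is exactly right: the Mayer--Vietoris count $h^0\bigl(X,\eta(x-y)\bigr)\geq 1+2+0-2=1$ shows the naive semicontinuity locus is \emph{all} of $C\times D$, so the limit cannot be read off from it. But the step you build everything else on --- that after stable reduction the limiting line bundle restricts to $C\times D$ as $\eta_C(p)\boxtimes \eta_D(q)$ --- is asserted, not proved, and you yourself flag it as the unresolved ``main obstacle.'' That is a genuine gap: the twist by boundary components of the central fibre of $\cX\times_B\cX$ is pinned down only by requiring the limit section to be nonzero on every one of the nine components of $X\times X$ simultaneously, and verifying that the twist you name is the compatible one (in particular near the corner $(p,q)$ where the central fibre fails to be normal crossings) is essentially the whole content of the proposition. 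Without it, the Künneth argument has nothing to apply to. The Künneth step itself is fine: $h^0\bigl(C,\eta_C(p)\bigr)=h^0\bigl(D,\eta_D(q)\bigr)=1$ since the theta-characteristics are ineffective, and $\mathrm{div}(\tau_C)=S(C,\eta_C)_p$ via $h^0\bigl(\eta_C(p-x)\bigr)=h^0\bigl(\eta_C(x-p)\bigr)$, so the answer you would extract is the right one.

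The paper avoids the surface-level stable reduction entirely by a one-dimensional Eisenbud--Harris limit linear series argument on $C\cup_p D$. For $x\in C\setminus\{p\}$ and $y\in D\setminus\{q\}$, the degenerating condition $h^0\bigl(\eta(x-y)\bigr)\neq 0$ becomes the existence of a limit $\mathfrak{g}^0_{g-1}$ with aspects $\sigma_C\in H^0\bigl(C,\eta_C((g-i-1)\cdot p+x)\bigr)$ and $\sigma_D\in H^0\bigl(D,\eta_D((i+1)\cdot q-y)\bigr)$ satisfying $\mathrm{ord}_p(\sigma_C)+\mathrm{ord}_q(\sigma_D)\geq g-1$. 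If $x\notin S(C,\eta_C)_p$ and $y\notin S(D,\eta_D)_q$, then $\mathrm{ord}_p(\sigma_C)\leq g-i-1$ and $\mathrm{ord}_q(\sigma_D)\leq i-1$, contradicting the inequality; if instead $x=x_{\ell}$, then the required $\mathrm{ord}_q(\sigma_D)\geq i-1$ is automatic by Riemann--Roch, so $y$ is unconstrained. This produces the rulings $\{x_{\ell}\}\times D$ and $C\times\{y_k\}$ directly, with no need to identify the limit line bundle on $C\times D$. If you wish to keep your route, you must supply the compatibility analysis that makes your twist canonical --- at which point you would in effect be re-deriving these vanishing-order inequalities.
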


\begin{remark}\label{rem:Ai}
Note that none of the components of the Scorza curve vary in moduli as the point of attachment $p$ (respectively $q$) varies on either $C$ (respectively $D$). Observe also that the stable curve described in Proposition \ref{prop:Ai} has $\nu:=2+2(i+g-i)=2+2g$ irreducible components, all smooth. The sum  of their genera equals
\begin{align*}
\sigma:=g\bigl(S(C,\eta_C)\bigr)+g\bigl(S(D,\eta_D)\bigr)+2\bigl((g-i)i+i(g-i)\bigr)\\
=2+3i(i-1)+3(g-i)(g-i-1)+4i(g-i)=3g^2-2gi+2i^2-3g+2.
\end{align*}
These components meet at a total number of $\delta:=2i(g-i)+2(g-i)+2i=2(ig-i^2+g)$ nodes. It follows that the arithmetic genus of the curve described in Proposition \ref{prop:Ai} equals $p_a\bigl(S(X,\eta))=1+\sigma+\delta-\nu=1+3g(g-1)$, which provides a numerical verification of the conclusion of Proposition \ref{prop:Ai}.
\end{remark}

\noindent \emph{Proof of Proposition \ref{prop:Ai}}. To simplify notation, we will identify the marked point $p\in C$ and $q\in D$, respectively. We consider the curve of compact type $C\cup_p D$ as above and assume a pair $(x,y)\in (C\cup D)\times (C\cup D)$ lies in the limiting Scorza correspondence $S(X, \eta)$. We may assume that $x\neq y$ and that $p\notin \{x,y\}$. If  $x,y\in C$ (respectively, if $x,y\in D$), we quickly reach the conclusion that $(x,y)\in S(C, \eta_C)$ (respectively, that $(x,y)\in S(D, \eta_D)$). We may assume thus without loss of generality that $x\in C\setminus\{p\}$ and that $y\in D\setminus \{q\}$. The limiting condition on $(x,y)$ can be stated as saying that there exist a limit linear series of type $\mathfrak g^0_{g-1}$ on $C\cup D$ (in the sense of \cite{EH}), obtained by twisting the spin structure by the Cartier divisor $x-y$ on the stable curve $C\cup D$. This means that there exists sections $0\neq \sigma_C\in H^0\bigl(C,\eta_C((g-i-1)\cdot p+x)\bigr)$ and $\sigma_D\in H^0\bigl(D, \eta_D((i+1)\cdot q-y)\bigr)$ such that
\begin{equation}\label{eq:ordersAi}
\mbox{ord}_p(\sigma_C)+\mbox{ord}_q(\sigma_D)\geq g-1.
\end{equation}

\vskip 3pt
We claim that $(x,p)\in S(C,\eta_C)$, that is, $x\in \{x_1, \ldots, x_i\}$, or else $(y,q)\in S(D, \eta_D)$, that is, $y\in \{y_1, \ldots, y_{g-i}\}$. Indeed, if neither of these possibilities is realized, then necessarily $\mbox{ord}_p(\sigma_C)\leq g-i-1$ \emph{and} $\mbox{ord}_q(\sigma_D)\leq i-1$, thus $\mbox{ord}_p(\sigma_C)+\mbox{ord}_q(\sigma_D)\leq g-2$, which contradicts (\ref{eq:ordersAi}). Assuming for instance $x=x_{\ell}\in S(C,\eta_C)_p$ for $\ell\in \{1, \ldots, i\}$, then $\mbox{ord}_p(\sigma_C)=g-i$, in which case $\mbox{ord}_q(\sigma_D)\geq i-1$, which is automatic by Riemann-Roch,  showing that $y\in D$ can be chosen arbitrarily. This accounts for the  component $\{x_{\ell}\}\times D\subseteq C\times D$ inside the limiting Scorza curve $S(X,\eta)$. The remaining components can be explained similarly.
\hfill $\Box$

\section{The Scorza curve at a general point of the divisor $B_i$, when $i\geq 1$}\label{sect:scorza_Bi}

Determining the limit Scorza curve along the boundary divisor $B_i$ of $\ss_g^+$ turns out to be more delicate than in the case of the divisor $A_i$. We need some preparation and start with a smooth pointed curve $[C, p]\in \cM_{g,1}$ and with an \emph{odd} theta characteristic on $C$.

We have the exact sequence in cohomology on $C^{(2)}$
$$0\longrightarrow H^0\bigl(\cL_{\eta(p)}(-\delta)\bigr) \lra H^0\bigl(\cL_{\eta(p)}(\delta)\bigr)\stackrel{\rho}{\lra} H^0\bigl(C, \cO_C(2p)\bigr)\lra H^1\bigl(\cL_{\eta(p)}(-\delta)\bigr),$$
where we have used that $\cO_{\overline{\Delta}}\bigl(\cL_{\eta(p)}(\delta)\bigr)\cong \eta^{\otimes 2}\otimes \omega_C^{\vee}(2p)\cong \cO_C(2p)$.

\begin{lemma}\label{lemma:etap}
Let $[C, p]\in \cM_{g,1}$ be a smooth pointed curve and fix an \emph{odd} spin structure $\eta$ on $C$ with $h^0(C,\eta)=1$ and assume $p\not\in\mathrm{supp}(\eta)$. The natural map
$$H^0\bigl(C^{(2)}, \cL_{\eta(p)}(\delta)\bigr) \stackrel{\rho}{\lra} H^0(C, \cO_C(2p)\bigr)$$
above is an isomorphism. In particular, if $p\in C$ is not a hyperrelliptic Weierstrass point of $C$, then the linear system $\bigl|\cL_{\eta(p)}(\delta)\bigr|$ contains a unique curve $Y_{\eta, p} \subseteq C^{(2)}$.

When $p\in \mathrm{supp}(\eta)$, the map $\rho$ is zero. However, $ \bigl|\cL_{\eta(p)}(\delta)\bigr|$ still contains exactly one curve $Y_{\eta, p} \subseteq C^{(2)}$.
\end{lemma}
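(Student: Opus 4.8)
The plan is to extract both assertions from the four-term exact sequence preceding the lemma, once $h^0(\eta(p))$ is determined. As $\deg\eta(p)=g$, Riemann--Roch gives $h^0(\eta(p))-h^1(\eta(p))=1$, and Serre duality combined with $\omega_C\cong\eta^{\otimes2}$ yields $h^1(\eta(p))=h^0(\omega_C\otimes\eta^{\vee}(-p))=h^0(\eta(-p))$. Writing $s$ for the unique (up to scalar) section of $\eta$, we have $h^0(\eta(-p))=1$ precisely when $s(p)=0$, i.e.\ when $p\in\mathrm{supp}(\eta)$, and $h^0(\eta(-p))=0$ otherwise. Thus $(h^0,h^1)(\eta(p))=(1,0)$ if $p\notin\mathrm{supp}(\eta)$ and $(h^0,h^1)(\eta(p))=(2,1)$ if $p\in\mathrm{supp}(\eta)$.

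When $p\notin\mathrm{supp}(\eta)$ the conclusion is immediate. By (\ref{eq:cohsympr}) applied to $\cL_{\eta(p)}(-\delta)$ one has $H^0\bigl(\cL_{\eta(p)}(-\delta)\bigr)\cong\bigwedge^2H^0(\eta(p))=0$ and $H^1\bigl(\cL_{\eta(p)}(-\delta)\bigr)\cong H^1(\eta(p))\otimes H^0(\eta(p))=0$, so the four-term sequence degenerates to the isomorphism $\rho\colon H^0\bigl(\cL_{\eta(p)}(\delta)\bigr)\xrightarrow{\sim}H^0(C,\cO_C(2p))$. If $p$ is not a hyperelliptic Weierstrass point then $h^0(\cO_C(2p))=1$ and $\bigl|\cL_{\eta(p)}(\delta)\bigr|$ consists of the single curve $Y_{\eta,p}$.

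The case $p\in\mathrm{supp}(\eta)$ is the substantial one: here $H^0\bigl(\cL_{\eta(p)}(-\delta)\bigr)\cong\bigwedge^2H^0(\eta(p))$ and $H^1\bigl(\cL_{\eta(p)}(-\delta)\bigr)\cong H^1(\eta(p))\otimes H^0(\eta(p))$ have dimensions $1$ and $2$, the sequence no longer reads off $\rho$, and the assertion $\rho=0$ is equivalent to $h^0\bigl(C^{(2)},\cL_{\eta(p)}(\delta)\bigr)=1$. I would compute this dimension by Serre duality on $C^{(2)}$, exactly as in Proposition \ref{lemeta02}. From $\omega_{C^{(2)}}\cong\cL_{\omega_C}(-\delta)$ and the relation $\cL_A\otimes\cL_B^{\vee}\cong\cL_{A\otimes B^{\vee}}$ (a consequence of $q^*$ being injective on Picard groups) one obtains $\omega_{C^{(2)}}\otimes\cL_{\eta(p)}(\delta)^{\vee}\cong\cL_{\eta(-p)}(-2\delta)$, whence $H^0\bigl(\cL_{\eta(p)}(\delta)\bigr)\cong H^2\bigl(C^{(2)},\cL_{\eta(-p)}(-2\delta)\bigr)^{\vee}$. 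Setting $N:=\eta(-p)$, which has $\deg N=g-2$, $h^0(N)=1$, $h^1(N)=2$ and $\cO_{\overline{\Delta}}(\cL_N)\cong N^{\otimes2}\cong\omega_C(-2p)$, the cohomology sequence of $0\to\cL_N(-2\delta)\to\cL_N\to\cO_{\overline{\Delta}}(\cL_N)\to0$ has tail $H^1(N)\otimes H^0(N)\xrightarrow{\cup_1}H^1(\omega_C(-2p))\to H^2\bigl(\cL_N(-2\delta)\bigr)\to\bigwedge^2H^1(N)\to0$, using that $H^2$ of the curve $\overline{\Delta}$ vanishes.

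The hard part is therefore to prove that $\cup_1$ is surjective, for then $H^2\bigl(\cL_N(-2\delta)\bigr)\cong\bigwedge^2H^1(N)$ is one-dimensional and $h^0\bigl(\cL_{\eta(p)}(\delta)\bigr)=1$ follows. Since $H^0(N)$ is spanned by a single section $s$ with zero divisor $Z=\mathrm{div}(s)$ of degree $g-2$, the map $\cup_1$ is identified with multiplication $\cup s\colon H^1(N)\to H^1(N^{\otimes2})$, which appears in the cohomology sequence of $0\to N\xrightarrow{\,s\,}N^{\otimes2}\to\cO_Z\to0$; as $Z$ is finite, $H^1(\cO_Z)=0$ and $\cup s$ is surjective. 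This argument does not see whether $p$ is a hyperelliptic Weierstrass point, which is why no such restriction is needed in this case. Finally, $h^0\bigl(\cL_{\eta(p)}(\delta)\bigr)=1=h^0\bigl(\cL_{\eta(p)}(-\delta)\bigr)$ together with the injection $H^0\bigl(\cL_{\eta(p)}(-\delta)\bigr)\hookrightarrow H^0\bigl(\cL_{\eta(p)}(\delta)\bigr)$ given by multiplication with the section cutting out $\overline{\Delta}\in|2\delta|$ forces $\ker\rho=H^0\bigl(\cL_{\eta(p)}(\delta)\bigr)$, i.e.\ $\rho=0$; the unique member of $\bigl|\cL_{\eta(p)}(\delta)\bigr|$ is then the effective divisor $\Gamma_{\eta(p)}+\overline{\Delta}$, with $\Gamma_{\eta(p)}$ the trace curve of $\eta(p)$ satisfying $\cO_{C^{(2)}}(\Gamma_{\eta(p)})\cong\cL_{\eta(p)}(-\delta)$.
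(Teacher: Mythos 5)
Your proof is correct. The case $p\notin\mathrm{supp}(\eta)$ is handled exactly as in the paper: both vanishings $H^0\bigl(\cL_{\eta(p)}(-\delta)\bigr)=H^1\bigl(\cL_{\eta(p)}(-\delta)\bigr)=0$ follow from (\ref{eq:cohsympr}) and $(h^0,h^1)(\eta(p))=(1,0)$, so $\rho$ is an isomorphism. For $p\in\mathrm{supp}(\eta)$ you take a different route from the paper's main line of argument. The paper first asserts that the connecting map $H^0(\cO_C(2p))\to H^1\bigl(\cL_{\eta(p)}(-\delta)\bigr)\cong\mathrm{Hom}\bigl(H^0(\eta(-p)),H^0(\eta(p))\bigr)$ is injective (so $\rho=0$), and then identifies $H^0\bigl(\cL_{\eta(p)}(\delta)\bigr)$ with the one-dimensional space $\bigwedge^2H^0(\eta(p))$, mentioning the Serre-duality computation only in a parenthetical aside. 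You instead carry out that Serre-duality computation in full: you reduce to $H^2\bigl(C^{(2)},\cL_{\eta(-p)}(-2\delta)\bigr)$, and the one nontrivial input is the surjectivity of the cup product $H^1(\eta(-p))\otimes H^0(\eta(-p))\to H^1(\omega_C(-2p))$, which you obtain cleanly from $H^1(\cO_Z)=0$ in the sequence $0\to N\stackrel{s}{\to}N^{\otimes 2}\to\cO_Z\to 0$; the vanishing $\rho=0$ then drops out of the dimension count via the injection $H^0\bigl(\cL_{\eta(p)}(-\delta)\bigr)\hookrightarrow H^0\bigl(\cL_{\eta(p)}(\delta)\bigr)$. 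This is arguably the more self-contained argument: the paper's injectivity claim for the connecting homomorphism is left essentially unjustified, whereas your cup-product step supplies a complete reason why $h^0\bigl(\cL_{\eta(p)}(\delta)\bigr)=1$, and, as you note, it makes transparent why no Weierstrass-point hypothesis is needed in this case. Your identification of the unique member of $\bigl|\cL_{\eta(p)}(\delta)\bigr|$ as $\Gamma_{\eta(p)}+\overline{\Delta}$ also agrees with what the paper establishes later in the proof of Theorem \ref{cor:genusBi}.
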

\begin{proof}
We use the identification provided by (\ref{eq:cohsympr}) and write
\begin{align*}
H^0\bigl(C^{(2)}, \cL_{\eta(p)}(-\delta)\bigr)\cong \bigwedge^2 H^0\bigl(C, \eta(p)\bigr) \ \  \mbox{ and } \\
 H^1\bigl(C^{(2)}, \cL_{\eta(p)}(-\delta)\bigr)\cong H^1\bigl(C, \eta(p)\bigr)\otimes H^0\bigl(C, \eta(p)\bigr).
\end{align*}
If $p\notin \mbox{supp}(\eta)$, then $h^0(C, \eta(p))=1$ and $H^1(C,\eta(p))=0$, therefore $H^i\bigl(C^{(2)}, \cL_{\eta(p)}(-\delta)\bigr)=0$, for $i=0,1$,
which establishes the canonical identification
\begin{equation}\label{eq:biid}
H^0\bigl(C^{(2)}, \cL_{\eta(p)}(\delta)\bigr)\cong H^0\bigl(C, \cO_C(2p)\bigr).
\end{equation}
If, on the other hand, $p\in \mbox{supp}(\eta)$, then $H^1\bigl(C^{(2)}, \cL_{\eta(p)}(-\delta))\cong \mbox{Hom}\bigl(H^0(\eta(-p)), H^0(\eta(p))\bigr)$, hence we conclude that the map $H^0\bigl(C, \cO_C(2p)\bigr)\rightarrow H^1\bigl(C^{(2)}, \cL_{\eta(p)}(-\delta)\bigr)$ is injective and $\rho$ is $0$. Therefore $H^0\bigl(C^{(2)},\cL_{\eta(p)}(\delta)\bigr)\cong \bigwedge^2 H^0\bigl(C, \eta(p)\bigr)$ (also see Appendix A.3 of \cite{I}, noting that one can use Serre Duality and the fact that $\omega_{C^{(2)} }\otimes (\cL_{\eta(p)}(\delta))^{-1} \cong
\cL_{\eta(-p)}(-2\delta)$).
In both cases it follows that the linear system $\bigl|\cL_{\eta(p)}(\delta)\bigr|$ contains a single curve.
\end{proof}

%\begin{remark}
%The locus of odd spin curves $[C, \eta]\in \cS_g^-$ for which the support $\mathrm{supp}(\eta)$ is not reduced forms an effective divisor studied in detail in \cite[Theorem 0.5]{FV}.
%\end{remark}

\vskip 4pt

We denote by $X_{\eta,p}:=q^{-1}(Y_{\eta,p})$ the inverse image in $C\times C$ of the curve $Y_{\eta, p}\subseteq C^{(2)}$ singled out by Lemma \ref{lemma:etap}. Observe  that $\cO_{C\times C}(X_{\eta,p})\cong \eta(p)\boxtimes \eta(p)(\Delta)$. Therefore, by the adjunction formula, we obtain that $p_a(X_{\eta,p})=3g^2+g$. Similarly, we obtain that
\begin{equation}\label{eq:genyp}
p_a(Y_{\eta,p})=\frac{g}{2}(3g+1).
\end{equation}
We will show that in general  $Y_{\eta,p}$ is a smooth curve. We begin with the following result:

\begin{proposition}\label{lem:sing2}
Fix a general point $[C, \eta, p]\in \cS_{g,1}^-$. Suppose $x+y\in Y_{\eta , p}$ with $x\neq y$ and none of the points $x,y$ belong to $\mathrm{supp}(\eta)$. Then $Y_{\eta, p}$ is nonsingular at the point $x+y$.
\end{proposition}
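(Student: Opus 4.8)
The plan is to lift the question to the Cartesian square and reduce it to the non-vanishing of the differential of the defining section. Since $x\neq y$, the quotient $q\colon C\times C\to C^{(2)}$ is étale in a neighbourhood of $(x,y)$, so $Y_{\eta,p}$ is smooth at $x+y$ if and only if $X_{\eta,p}=q^{-1}(Y_{\eta,p})$ is smooth at $(x,y)$. Write $X_{\eta,p}=Z(s)$, where $s$ is the section of $\cL:=\eta(p)\boxtimes\eta(p)(\Delta)$ pulled back from the generator of the one-dimensional space $H^0\bigl(C^{(2)},\cL_{\eta(p)}(\delta)\bigr)$ produced in Lemma \ref{lemma:etap}. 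The two rulings $V_x:=\{x\}\times C$ and $H_y:=C\times\{y\}$ furnish local coordinates at $(x,y)$, and $Z(s)$ is singular there precisely when both restrictions $s|_{V_x}$ and $s|_{H_y}$ vanish to order at least two, at $y$ and at $x$ respectively. Hence it suffices to show that at least one of these restrictions has a simple zero.

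Next I would convert each tangency into an effectivity statement. Restricting $\cL$ to $V_x$ and using $\Delta\cap V_x=(x,x)$ gives $\cL|_{V_x}\cong\eta(p+x)$, so $s|_{V_x}\in H^0\bigl(C,\eta(p+x)\bigr)$; it vanishes at $y$, while the residue normalisation of $s$ along $\Delta$ (the image of $s$ in $H^0(\cO_C(2p))$ under $\rho$ of Lemma \ref{lemma:etap}) forces $s|_{V_x}(x)\neq 0$ for $x\neq p$, the case $x=p$ being handled symmetrically through $H_y$. A double zero at $y$ would place $s|_{V_x}$ in $H^0\bigl(C,\eta(p+x-2y)\bigr)$, making $\eta(p+x-2y)$ effective; symmetrically, a double zero of $s|_{H_y}$ at $x$ makes $\eta(p+y-2x)$ effective. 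Thus a singularity of $X_{\eta,p}$ at $(x,y)$ implies simultaneously the two conditions $(\mathrm{i})$ $\eta(p+x-2y)\in W_{g-1}(C)$ and $(\mathrm{ii})$ $\eta(p+y-2x)\in W_{g-1}(C)$, where $W_{g-1}$ denotes the theta divisor.

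It then remains to show that, for general $[C,\eta,p]$, no off-diagonal pair $(x,y)$ with $x,y\notin\mathrm{supp}(\eta)$ lies on $X_{\eta,p}$ and satisfies $(\mathrm{i})$ and $(\mathrm{ii})$ at once. I would establish this by a dimension count on the universal parameter space $\mathcal P=\{(C,\eta,p,x,y)\}$ over $\cS_{g,1}^-$, which has dimension $3g$. Membership $(x,y)\in X_{\eta,p}$ is a divisorial (codimension one) condition, since $X_{\eta,p}$ is a curve in the surface $C\times C$ varying in families over $\cS_g^{\mathrm{free}}$; each of $(\mathrm{i})$ and $(\mathrm{ii})$ is a pullback of the theta divisor and is likewise of codimension one. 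If the three conditions are independent, the bad locus $\mathcal Z$ has dimension at most $3g-3<3g-2=\dim\cS_{g,1}^-$, so its image omits the general spin curve, which is exactly the assertion of the proposition.

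The main obstacle is precisely the independence of these three conditions: a priori membership in $X_{\eta,p}$ might force $(\mathrm{i})$, collapsing the count. I would dispose of this by producing witness points — first a point of $X_{\eta,p}$ at which $(\mathrm{i})$ fails, showing $(\mathrm{i})$ is a proper condition on the membership locus, and then a point of $X_{\eta,p}\cap\{(\mathrm{i})\}$ at which $(\mathrm{ii})$ fails — for instance on a conveniently degenerate or low-genus spin curve inside $\cS_g^{\mathrm{free}}$, after which openness of the smooth locus in the family upgrades the conclusion to the general member. A streamlining worth recording is that $H^0\bigl(C\times C,\cL\bigr)$ is two-dimensional, spanned by $s$ together with $\sigma:=s_{\eta(p)}\boxtimes s_{\eta(p)}$, where $s_{\eta(p)}$ generates $H^0(C,\eta(p))$; since $\sigma(x,y)\neq 0$ for $x,y\notin\mathrm{supp}(\eta)\cup\{p\}$, the first jet map at $(x,y)$ has kernel contained in $\langle s\rangle$, so that $X_{\eta,p}$ is singular at $(x,y)$ if and only if $H^0\bigl(\cL\otimes\mathfrak m_{(x,y)}^2\bigr)\neq 0$. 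This reformulates the singularity as the non-vanishing of a single cohomology group, which can be estimated directly as a degeneracy locus and fed into the same dimension count.
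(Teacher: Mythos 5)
Your reduction is sound as far as it goes: since $x\neq y$ the problem lifts to $C\times C$, the two rulings give local coordinates, and a singularity of $X_{\eta,p}$ at $(x,y)$ does force both $H^0\bigl(C,\eta(p+x-2y)\bigr)\neq 0$ and $H^0\bigl(C,\eta(p+y-2x)\bigr)\neq 0$; your closing remark that $h^0\bigl(C\times C,\eta(p)\boxtimes\eta(p)(\Delta)\bigr)=2$, with the second generator $s_{\eta(p)}\boxtimes s_{\eta(p)}$ (times the section of $\cO(\Delta)$) non-vanishing at $(x,y)$, shows you have in effect arrived at the same reformulation the paper starts from, namely that the singularity is equivalent to $H^0\bigl(C^{(2)},\cI_{\{x+y\}/C^{(2)}}^2\otimes\cL_{\eta(p)}(\delta)\bigr)\neq 0$. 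The divergence is in how this group is killed. The paper proves its vanishing directly and unconditionally: it resolves $\cI_{\{x+y\}/C^{(2)}}^2$ by a Koszul complex, converts the resulting surjectivity question on $H^1$'s via Serre duality into a statement about sums of images of multiplication maps $\mathrm{Sym}^2 H^0\bigl(\eta(-p+2x+y)\bigr)\rightarrow H^0\bigl(\omega_C(-2p+4x+2y)\bigr)$, and settles that by an explicit order-of-vanishing computation with a basis $\sigma_1,\sigma_2$; the only genericity used is the vanishing of a few explicit cohomology groups, checked pointwise.

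The gap in your proposal is the step you yourself flag as the main obstacle. The dimension count closes only if each of the conditions $(\mathrm{i})$ and $(\mathrm{ii})$ cuts the dimension down on \emph{every} irreducible component of the incidence locus $\{(C,\eta,p,x,y):(x,y)\in X_{\eta,p}\}$, respectively of its intersection with $(\mathrm{i})$. You do not establish this: no witness point is produced, no degeneration is specified, and irreducibility of the relevant loci is not addressed (a single witness only excludes that a condition holds identically on the component containing it). That this independence is not automatic is illustrated by the even case, where the analogous pair of conditions $H^0\bigl(\eta(x-2y)\bigr)\neq 0$ and $H^0\bigl(\eta(y-2x)\bigr)\neq 0$ on the Scorza correspondence cuts out a \emph{divisor} $\mathfrak{D}_{\mathrm{sg}}$ in moduli rather than a codimension-two locus. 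Moreover, the natural candidate witnesses are awkward here: by the computation in the proof of Theorem \ref{cor:genusBi}, the fibre of $X_{\eta,p}$ over $x=p$ is $2\cdot(p,p)+\sum_i (p,z_i)$ with $z_i\in\mathrm{supp}(\eta)$, so the readily accessible special points of $X_{\eta,p}$ all violate your standing hypotheses $x\neq y$, $x,y\notin\mathrm{supp}(\eta)$. Since the whole content of the proposition is precisely this independence, the argument is incomplete as it stands; the paper's cohomological computation is what supplies the missing input.
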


\begin{proof}
It is sufficient to prove that $H^0\bigl(C^{(2)}, \cI_{\{x+y\}/ C^{(2)}}^2\otimes \cL_{\eta(p)}(\delta)\bigr)=0$. We have the following Koszul resolution of the ideal $\cI_{\{x+y\}/ C^{(2)}}^2$

\begin{align*} 0 \lra \cO_{C^{(2)}} (- C_{2x+y}) \bigoplus \cO_{C^{(2)}} (- C_{x+2y})\lra \\
 \cO_{C^{(2)}} (- C_{2x}) \bigoplus \cO_{C^{(2)}} (- C_{x+y}) \bigoplus \cO_{C^{(2)}} (- C_{2y})
\lra \cI_{\{x+y\}/ C^{(2)}}^2 \lra 0,
\end{align*}
which, after twisting by the line bundle $\cL_{\eta(p)}(\delta)$, becomes

\begin{equation}\label{eq:nI2xyres}
\begin{split}
0 \lra \cL_{\eta(p-2x-y)}(\delta) \bigoplus \cL_{\eta(p-x-2y)}(\delta) \lra \cL_{\eta(p -2x)}(\delta) \bigoplus \cL_{\eta(p-x-y)}(\delta) \bigoplus
\cL_{\eta(p -2y)}(\delta) \\
\lra \cI_{\{x+y\}/C^{(2)}}^2\otimes \cL_{\eta(p)}(\delta)\lra 0.
\end{split}
\end{equation}

Since $p,x$ and $y$ do not belong to $\mathrm{supp}(\eta)$, by using suitable twists of the exact sequence
$0\rightarrow \cO_{C^{(2)}}(-2\delta)\rightarrow \cO_{C^{(2)}}\rightarrow \cO_{\overline{\Delta}}\rightarrow 0$, via the identifications  (\ref{eq:cohsympr}), we obtain (also see Appendix A.3 of \cite{I})
$$H^0\bigl(C^{(2)}, \cL_{\eta(p-2x)}(\delta)\bigr)=0, \ \ H^0\bigl(C^{(2)}, \cL_{\eta(p-2y)}(\delta)\bigr)=0 \mbox{ and } H^0\bigl(C^{(2)}, \cL_{\eta(p-x-y)}(\delta)\bigr)=0.$$

We therefore obtain from (\ref{eq:nI2xyres}) the following exact sequence
\[
\begin{split} 0 \lra H^0\bigl(\cI_{\{x+y\}/C^{(2)}}^2\otimes \cL_{\eta(p)}(\delta)\bigr) \lra
H^1(\cL_{\eta(p-2x-y)}(\delta)) \oplus H^1(\cL_{\eta(p-x-2y)}(\delta)) \\
\lra H^1(\cL_{\eta(p -2x)}(\delta)) \oplus H^1(\cL_{\eta(p-x-y)}(\delta)) \oplus H^1(\cL_{\eta(p -2y)}(\delta))\lra \cdots
\end{split}
\]
which, after applying Serre duality, yields by taking duals the following exact sequence:
\[
\begin{split}
H^1\bigl(\cL_{\eta(-p +2x)}(-2\delta)\bigr) \oplus H^1\bigl(\cL_{\eta(-p+x+y)}(-2\delta)\bigr) \oplus H^1\bigl(\cL_{\eta(-p +2y)}(-2\delta)\bigr)\stackrel{\gamma} \lra \\
H^1\bigl(\cL_{\eta(-p+2x+y)}(-2\delta)\bigr) \oplus H^1\bigl(\cL_{\eta(-p+x+2y)}(-2\delta)\bigr)
 \lra H^0\bigl(\cI_{\{x+y\}/C^{(2)}}\otimes \cL_{\eta(p)}(\delta)\bigr)^{\vee} \lra 0 .\end{split}
\]
We now consider the following morphism of exact sequences
\begin{equation}
\label{eq:alt-sep}
\begin{tikzcd}[column sep=18pt, row sep=4pt]
  \mbox{Sym}^2 H^0(\eta(-p+2x))  & H^0(\omega_C(-2p+4x))  & H^1(\cL_{\eta(-p+2x)}(-2\delta)) \\
  \oplus & \oplus & \oplus \\
  \mbox{Sym}^2 H^0(\eta(-p+2y)) \arrow[r, "\mu"]
  & H^0(C, \omega_C(-2p+4y))  \arrow[r]
  & H^1(\cL_{\eta(-p+2y)}(-2\delta))\\
  \oplus & \oplus &  \oplus
    \\
   \mbox{Sym}^2 H^0(\eta(-p+x+y)) \arrow[ddddd, "\alpha"] & H^0(\omega_C(-2p+2x+2y)) \arrow[ddddd, "\beta"] & H^1(\cL_{\eta(-p+x+y)}(-2\delta)) \arrow[ddddd, "\gamma"]\\
              &  &                     \\
              &   &                    \\
              &   &                    \\
              &   &                    \\
  \mbox{Sym}^2 H^0(\eta(-p+2x+y)) \arrow[r, "j"] & H^0(\omega_C(-2p+4x+2y))\arrow[r, two heads] & H^1(\cL_{\eta(-p+2x+y)}(-2\delta))\\
  \oplus  & \oplus  & \oplus\\
  \mbox{Sym}^2 H^0(\eta(-p+x+2y)) & H^0(\omega_C(-2p+2x+4y)) & H^1(\cL_{\eta(-p+x+2y)}(-2\delta))\\
\end{tikzcd}
\end{equation}
where $\mu$ is the direct sum of the corresponding multiplication maps at the level of global sections, whereas $\beta$ is given componentwise by $\beta(a, b,c)=(a-c, b-c)$. The  last horizontal arrow  in diagram (\ref{eq:alt-sep}) is surjective because
$$H^1\bigl(C^{(2)}, \cL_{\eta(-p+2x+y)}\bigr)=0 \mbox{  and }  H^1\bigl(C^{(2)}, \cL_{\eta(-p+x+2y)}\bigr)=0,$$
 which follows because of our generality assumptions via the identifications (\ref{eq:cohsympr}). To conclude that the map $\gamma$ is surjective it remains to show via (\ref{eq:alt-sep}) that
\begin{equation}\label{eq:sufficient}
\mathrm{Im}(j)+\mathrm{Im}(\beta)=H^0(C, \omega_C(-2p+4x+2y))\bigoplus H^0(C, \omega_C(-2p+2x+4y)).
\end{equation}

Since the summands $\mbox{Sym}^2 H^0 (C, \eta(-p + 2x + y))$ and $H^0 (C, \omega_C(-2p +4x))$ map via the maps $j$, respectively, $\beta$ only into the summand $H^0 \bigl(C, \omega_C( -2p + 4x + 2y)\bigr)$ (a similar conclusion holds if we interchange $x$ and $y$),  a sufficient condition for (\ref{eq:sufficient}) to hold is that

\begin{equation}\label{eq:sufficient2}
 j\Bigl(\mbox{Sym}^2  H^0 (C, \eta( -p + 2x + y))\Bigr)+ H^0\bigl (C, \omega_C(- 2p +4x)\bigr)=H^0 \bigl(C, \omega_C(-2p + 4x + 2y)\bigr)
 \end{equation}

 and
 \begin{equation}\label{eq:sufficient3}
 j\Bigl(\mbox{Sym}^2  H^0 (C, \eta( -p + x +2 y))\Bigr)+ H^0\bigl (C, \omega_C(- 2p +4y)\bigr)=H^0 \bigl(C, \omega_C(-2p + 4y + 2x)\bigr).
 \end{equation}
 Note that $h^0(C,\omega_C(-2p +4x))= g+1$ and
 $h^0 (C, \omega_C(-2p+4x+2y))= g+3$.

\vskip 4pt

As observed earlier, the assumption that $p,x,y$ do not belong to the support of $\eta$ implies $h^0(C, \eta(-p + 2x + y))=2$, therefore
$\mbox{dim } \mbox{Sym}^2 H^0(C, \eta(-p+2x+y))=3$ and (\ref{eq:sufficient2}) amounts to proving that the intersection

\begin{equation}\label{eq:intersection2}
j\Bigl(\mbox{Sym}^2 H^0(C, \eta(-p+2x+y))\Bigr) \bigcap H^0\bigl(C, \omega_C(-2p+4x)\bigr)\subseteq H^0\bigl(C, \omega_C(-2p+4x+2y)\bigr)
\end{equation}
is $1$-dimensional. Similarly, the equality (\ref{eq:sufficient3}) amounts to the statement obtained from (\ref{eq:intersection2}) if we interchange $x$ and $y$.

Observe that  $H^0\bigl(C,\omega_C(-2p+4x)\bigr)$ is the subspace of $H^0\bigl(C,\omega_C(-2p+4x +2y)\bigr)$
consisting of sections vanishing with multiplicity at least $2$ at $y$. We choose a basis $(\sigma_1, \sigma_2)$ of
$H^0(C, \eta(-p+2x+y))$, where $\mbox{ord}_y(\sigma_2)\geq 1$ and $\mbox{ord}_y(\sigma_1)=0$.  Assume there is a further element other than $j(\sigma_2^2)$ in the intersection
(\ref{eq:intersection2}). Without loss of generality we may assume it to be $j(\sigma_1^2+\sigma_1\cdot \sigma_2+\sigma_2^2)$.  We obtain $\mbox{ord}_y(\sigma_1^2+\sigma_1\cdot \sigma_2+\sigma_2^2)\geq 2$, which
implies $\mbox{ord}_y(\sigma_1)\geq 1$, that is, $h^0(C, \omega_C(-p+2x))=2$, which is a contradiction. This finishes the proof.
\end{proof}

\begin{theorem}\label{cor:genusBi}
For a general choice of $[C,\eta,p]\in \cS_{g,1}^-$, the curve  $X_{\eta,p}$ has a unique nodal singularity at the point $(p,p)$, whereas $Y_{\eta,p}$ is a smooth curve.
\end{theorem}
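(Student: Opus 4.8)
The plan is to establish the smoothness of $Y_{\eta,p}$ first and then read off the structure of $X_{\eta,p}=q^{-1}(Y_{\eta,p})$ from the double cover $q\colon C\times C\to C^{(2)}$, which is étale away from $\Delta$ and ramified along $\Delta$. Since $X_{\eta,p}$ is symmetric and $q$ is a local isomorphism off $\Delta$, the singular points of $X_{\eta,p}$ lying off $\Delta$ occur in pairs and map bijectively to the singular points of $Y_{\eta,p}$ lying off $\overline{\Delta}$; hence it suffices to control $Y_{\eta,p}$ everywhere and then analyze the two surfaces along the diagonal separately.

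First I would pin down the diagonal. Because $p\notin\mathrm{supp}(\eta)$, Lemma \ref{lemma:etap} gives that $\rho$ is an isomorphism, so the unique section cutting out $Y_{\eta,p}$ restricts on $\overline{\Delta}\cong C$ to a nonzero section of $\cO_C(2p)$; as $h^0(C,\cO_C(2p))=1$ for general $p$, this restriction has divisor exactly $2p$. Therefore $Y_{\eta,p}$ meets $\overline{\Delta}$ only at the point $2p$, with intersection multiplicity $2$, and pulling back along $q$ (recall $q^*\overline{\Delta}=2\Delta$) shows that $X_{\eta,p}\cap\Delta=\{(p,p)\}$ with multiplicity $2$. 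In particular $(p,p)$ is the only point where $X_{\eta,p}$ can meet the fixed locus of the involution.

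Next I would prove $Y_{\eta,p}$ is smooth. Away from $\overline{\Delta}$ and from $\mathrm{supp}(\eta)$ this is exactly Proposition \ref{lem:sing2}. The remaining two cases, namely the finitely many points of $Y_{\eta,p}$ lying over $\mathrm{supp}(\eta)$ and the diagonal point $2p$, require their own cohomological vanishing statements in the spirit of Proposition \ref{lem:sing2}. For the diagonal point the key is to show $H^0\bigl(C^{(2)},\cI_{\{2p\}/C^{(2)}}^2\otimes\cL_{\eta(p)}(\delta)\bigr)=0$, using the Koszul resolution of $\cI_{\{2p\}/C^{(2)}}^2$ adapted to a point of $\overline{\Delta}$ together with the formulas (\ref{eq:cohsympr}) and Serre duality; this forces the defining section to vanish to order exactly $1$ at $2p$, so that $Y_{\eta,p}$ is smooth there and, by the previous paragraph, meets $\overline{\Delta}$ with multiplicity exactly $2$. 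For the support points one re-runs the Koszul argument of Proposition \ref{lem:sing2}, checking that for general $[C,\eta,p]$ the relevant line bundles $\eta(p-2x)$, $\eta(p-x-y)$, $\eta(p-2y)$ remain non-effective after the twist, so that the argument survives when $x$ or $y\in\mathrm{supp}(\eta)$. I expect these two special cases to be the main obstacle, the diagonal vanishing being the most delicate since the transversality input used in Proposition \ref{lem:sing2} degenerates exactly along $\overline{\Delta}$.

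Finally I would deduce the statement about $X_{\eta,p}$. Off $\Delta$ the map $q$ is an étale double cover, so smoothness of $Y_{\eta,p}$ yields smoothness of $X_{\eta,p}$ there. At $(p,p)$ I would use that, locally, $q$ is the standard quotient $(u,\xi)\mapsto(u,\xi^2)$: choose coordinates $(u,w)$ on $C^{(2)}$ with $\overline{\Delta}=\{w=0\}$ and coordinates $(u,\xi)$ on $C\times C$ with $\Delta=\{\xi=0\}$ and $w=\xi^2$. By the diagonal analysis, $Y_{\eta,p}$ is locally $\{bw+au^2+\cdots=0\}$ with $a\neq0$ (intersection multiplicity $2$ with $\overline{\Delta}$) and $b\neq0$ (smoothness at $2p$). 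Pulling back, $X_{\eta,p}$ is locally $\{b\xi^2+au^2+\cdots=0\}$, whose tangent cone $au^2+b\xi^2$ is a nondegenerate quadratic form with two distinct linear factors, exhibiting an ordinary node. Combined with $X_{\eta,p}\cap\Delta=\{(p,p)\}$ and the smoothness of $X_{\eta,p}$ off $\Delta$, this shows that $X_{\eta,p}$ has a unique singular point, an ordinary node, at $(p,p)$, while $Y_{\eta,p}$ is smooth, as claimed.
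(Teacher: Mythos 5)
Your overall architecture is reasonable and your last step is correct: granting that $Y_{\eta,p}$ is smooth and meets $\overline{\Delta}$ only at $\tilde{p}=2\cdot p$ with multiplicity exactly $2$, the local model $w=\xi^2$ for $q$ along the diagonal does exhibit an ordinary node of $X_{\eta,p}=q^{-1}(Y_{\eta,p})$ at $(p,p)$ and no other singularity. Your computation of $Y_{\eta,p}\cdot \overline{\Delta}$ via the restriction to $\cO_C(2p)$ also matches the paper's.

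The genuine gap is that the heart of the theorem --- smoothness of $Y_{\eta,p}$ at the points lying over $\mathrm{supp}(\eta)$ and at the diagonal point $\tilde{p}$ --- is precisely the part you leave as an expectation rather than a proof. This is not a routine re-run of Proposition \ref{lem:sing2}. There the hypothesis that $x,y\notin \mathrm{supp}(\eta)$ is used essentially: it gives the initial vanishings $H^0\bigl(\cL_{\eta(p-2x)}(\delta)\bigr)=0$, etc., and the count $h^0\bigl(C,\eta(-p+2x+y)\bigr)=2$ on which the final transversality step rests; when $x_0\in \mathrm{supp}(\eta)$ the bundle $\eta(p-x_0)$ is effective and these counts change, so your claim that ``the relevant line bundles remain non-effective'' fails as stated. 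Likewise, at $\tilde{p}\in\overline{\Delta}$ the Koszul resolution of $\cI_{\{x+y\}/C^{(2)}}^2$ built from the divisors $C_{2x+y}$, $C_{x+2y}$, and so on, degenerates when $x=y=p$, so the proposed vanishing $H^0\bigl(\cI_{\{2\cdot p\}/C^{(2)}}^2\otimes \cL_{\eta(p)}(\delta)\bigr)=0$ does not follow from the same computation. The paper handles these two cases by different, global arguments that you would need to supply: (i) specializing $p$ into $\mathrm{supp}(\eta)$, where $Y_{\eta,p}$ becomes the transverse union $\Gamma_{\eta(p)}\cup\overline{\Delta}$ of smooth curves, so $Y_{\eta,p}$ is at worst nodal for general $p$; (ii) showing that a node over a point of $\mathrm{supp}(\eta)$ would force $p$ to be an anti-ramification point of one of finitely many pencils $\eta(x_0)$, which is impossible for general $p$; and (iii) excluding a node of $Y_{\eta,p}$ at $\tilde{p}$ via the genus identity $p_a(X_{\eta,p})=2p_a(Y_{\eta,p})$ together with the fact that $q$ is \'etale on $X_{\eta,p}\setminus\{(p,p)\}$. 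Without a substitute for these three steps the proof is incomplete exactly where the difficulty lies.
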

\begin{proof}
First, we  show that $X_{\eta, p}$ is singular at the point $(p,p)\in C\times C$ and, to that end, we prove that the intersection multiplicity at
$(p,p)$ of  $X_{\eta,p}$ with both the diagonal $\Delta$ and with the translated curve $C\times \{p\}$ is at least two.

\vskip 3pt

We may assume that $p\in C$ is not a hyperelliptic Weierstrass point and $p\notin \mathrm{supp}(\eta)$. We have the following exact sequence
$$0\longrightarrow H^0\bigl(C^{(2)}, \cI_{\{2\cdot p\}/C^{(2)}}\otimes \cL_{\eta(p)}(\delta)\bigr)\lra H^0\bigl(C^{(2)}, \cL_{\eta(p)}(\delta)\bigr)\lra
H^0\bigl(\cO_p(2p)\bigr).$$
Using (\ref{eq:biid}) and the fact that since $H^0(C, \cO_C(p))\cong H^0(C, \cO_C(2p))$, the map $$H^0\bigl(C, \cO_C(2p)\bigr)\rightarrow H^0\bigl(\cO_{p}(2p)\bigr)$$ is equal to zero, therefore $Y_{\eta,p}$ passes through the point $\tilde{p}:=2\cdot p\in \overline{\Delta}$.

\vskip 4pt

We consider the embedding $\iota\colon C\hookrightarrow  C^{(2)}$ given by $\iota(x):=p+x$ so that $\mbox{Im}(\iota)=C_p$ (see Section \ref{subsecC_D}). Then $\iota^*\bigl(\cL_{\eta(p)}(\delta)\bigr)=\eta(2p)\in W^1_{g+1}(C)$, that is, the intersection cycle $C_p\cdot Y_{\eta,p}$ can be regarded as an element of the pencil $|\eta(2p)|$ that passes through the point $p$. Since $h^0(C, \eta(p))=1$,  one must have
$$C_p\cdot Y_{\eta,p}=2p+\mathrm{supp}(\eta)\in C^{(g+1)},$$
that is, $\bigl(Y_{\eta,p}\cdot C_p)_{\tilde{p}}= 2$ (since $p\not\in \mathrm{supp}(\eta)$). It immediately follows that we also have $\bigl(X_{\eta,p}\cdot C\times \{p\}\bigr)_{{(p,p)}}= \bigl(X_{\eta,p}\cdot  \{p\}\times C\bigr)_{{(p,p)}}=2$

\vskip 3pt

Next we determine  the intersection of $X_{\eta,p}$ with  $\Delta$. Denoting by $j\colon C\rightarrow C\times C$ the embedding $j(x):=(x,x)$,  since $j^*(\cL_{\eta(p)})\cong \cO_C(2p)$ and $h^0(C, \cO_C(2p))=1$, it follows that
$$\bigl(\Delta\cdot X_{\eta,p}\bigr)_{(p,p)}=2.$$
Since the curves $\Delta$ and $C\times \{p\}$ have distinct tangents at $(p,p)$, it follows that $X_{\eta,p}$ is singular at $(p,p)$. On the other
hand the curves $\overline{\Delta}$ and $C_p$ are tangent at the point $2\cdot p\in C^{(2)}$, so the above argument only shows that $Y_{\eta,p}$ is tangent to $\overline{\Delta}$ at $\tilde{p}$.

\vskip 4pt

In order to show that $Y_{\eta,p}$ has no other singularity, we specialize to the situation when $p\in \mathrm{supp}(\eta)$, in which case $\eta(p)\in W^1_g(C)$ is a pencil. We consider the trace curve
\begin{equation}\label{eq:tr2}
\Gamma_{\eta(p)}:=\Bigl \{ x+y\in C^{(2)}: H^0\bigl(C, \eta(p-x-y)\bigr)\neq 0\Bigr\}.
\end{equation}
Then, via \cite[Lemma 2.1]{I}, we observe that $\cO_{C^{(2)}}(\Gamma_{\eta(p)}) \cong \cL_{\eta(p)}(-\delta)$. Since, via Lemma \ref{lemma:etap}, we have that $h^0\bigl(C^{(2)}, \cL_{\eta(p)}(\delta)\bigr)=1$, it follows that the curve $Y_{\eta,p}$ splits into the union of $\Gamma_{\eta(p)}$ and $\overline{\Delta}$. Assuming that $\eta(p)$ is simply ramified, the union $Y_{\eta,p} \cup\overline{\Delta}$ is transverse. Furthermore, using again \cite{vdGK2}, the curve $\Gamma_{\eta(p)}$ is smooth, therefore the union $\Gamma_{\eta(p)} \cup \overline{\Delta}$ is a stable curve. In particular, $Y_{\eta, p}$ is a nodal curve for a general choice of $[C, \eta,p]$.

\vskip 4pt

Assume $x_0+y_0 \in Y_{\eta,p}$ is a singularity, where $x_0\in \mathrm{supp}(\eta)$ and $y_0\in C\setminus \{x_0, p\}$ (see Proposition \ref{lem:sing2}). Considering the embedding
$j_{x_0}\colon C\hookrightarrow C^{(2)}$ given by $j_{x_0}(y):=x_0+y$, since $j_{x_0}^*\bigl(\cO_{C^{(2)}}(Y_{\eta,p})\bigr)=\eta(p+x_0)$,
and  $|\eta(p+x_0)|=|\eta(x_0)|+p$, it follows that $y_0$ is one of the ramification points of the pencil $\eta(x_0)\in W^1_g(C)$. Assume first $y_0\notin \mathrm{supp}(\eta)$. Since $\bigl(C_{y_0}\cdot Y_{\eta,p}\bigr)_{x_0+y_0}\geq 2$, we obtain that $H^0(C, \eta(p+y_0-2x_0))\neq 0$. It follows that $H^0(C, \eta(x_0-2y_0-p))\neq 0$, that is, $p$ has to be one of the anti-ramification points\footnote{For a cover $f\colon C\rightarrow \PP^1$, we say that a point $p\in C$ is an anti-ramification point if it lies in a fibre of $f$ over a branch point of $f$.} of the pencil $\eta(x_0)$. Since there are finitely many such pencils, this singles out a finite number of points $p$ and, for a general choice of $p\in C$, this scenario does not occur. If, on the other hand $y_0\in \mathrm{supp}(\eta)$, we obtain that
\begin{equation}\label{eq:double_supp}
H^0(C, \eta(x_0-2y_0)\neq 0\ \ \mbox{ and } \ \ H^0(C, \eta(y_0-2x_0)\neq 0.
\end{equation}
In this case, $\eta(x_0)$ has $y_0$ as a base point, which is impossible for a general $[C, \eta]$, which implies that (\ref{eq:double_supp}) is impossible. It follows that $Y_{\eta,p}$ has no singularities at a point $(x_0, y_0)$, where $x_0\in \mathrm{supp}(\eta) \setminus \{p\}$. Combining this with Proposition \ref{lem:sing2}, we conclude that the last possible case to be ruled out is when $Y_{\eta, p}$ has a nodal singularity at the point $\tilde{p}$.
We consider the cover $q\colon X_{\eta,p}\rightarrow Y_{\eta,p}$ and recall that $(p,p)$ is a nodal singularity of $X_{\eta,p}$. Since $q$ is \'etale everywhere on $X_{\eta,p}\setminus \{(p,p)\}$, we reach a contradiction with the fact $p_a(X_{\eta,p})=2p_a(Y_{\eta,p})$, which finishes the proof.
\end{proof}

\begin{corollary}\label{corr:fibre_minus}
Let $g\geq 3$. For a general choice of $[C, \eta]\in \cS_g^-$, the curve $Y_{\eta,p}$ is nodal for every $p\in C$.
\end{corollary}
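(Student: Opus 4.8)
The plan is to bootstrap from Theorem~\ref{cor:genusBi}, which treats a \emph{general} triple $[C,\eta,p]\in\cS_{g,1}^-$, to the assertion that a general $[C,\eta]\in\cS_g^-$ works for \emph{every} $p$, via a dimension count along the forgetful morphism $\pi\colon\cS_{g,1}^-\to\cS_g^-$. Since being at worst nodal is an open condition in families of curves, the locus
$$Z:=\bigl\{[C,\eta,p]\in\cS_{g,1}^-: Y_{\eta,p}\text{ has a singularity that is not a node}\bigr\}$$
is closed. As $\dim\cS_{g,1}^-=\dim\cS_g^-+1=3g-2$, it suffices to prove $\operatorname{codim}_{\cS_{g,1}^-}Z\geq 2$: then $\dim\pi(Z)\leq\dim Z\leq 3g-4<3g-3=\dim\cS_g^-$, so $\pi(Z)$ is a proper closed subset, and for any $[C,\eta]$ outside it the entire fibre $\pi^{-1}([C,\eta])\cong C$ avoids $Z$, i.e. $Y_{\eta,p}$ is nodal for every $p$. (For $g\geq 3$ a general curve is non-hyperelliptic, so Lemma~\ref{lemma:etap} ensures $Y_{\eta,p}$ is a well-defined curve for all $p$.)

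First I would stratify the singular locus $W\supseteq Z$ by the position of the singular point, using Proposition~\ref{lem:sing2} together with the analysis in the proof of Theorem~\ref{cor:genusBi}: for general $[C,\eta]$ a singularity of $Y_{\eta,p}$ can only sit at the diagonal point $\tilde{p}=2\cdot p\in\overline{\Delta}$ or at a point $x_0+y_0$ with $x_0\in\operatorname{supp}(\eta)$. I then check that each stratum of $W$ that has codimension $1$ in $\cS_{g,1}^-$ has an at-worst-nodal generic member, so that $Z$ meets it in a proper closed subset and hence enters $Z$ only in codimension $\geq 2$. On the stratum $p\in\operatorname{supp}(\eta)$ one already has $Y_{\eta,p}=\Gamma_{\eta(p)}\cup\overline{\Delta}$, a transverse union of smooth curves for simply ramified $\eta(p)$, hence nodal; this stratum is $(3g-3)$-dimensional, so its non-nodal members form a proper closed subset. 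On the stratum where a singularity lies off the diagonal at a point of $\operatorname{supp}(\eta)$, the proof of Theorem~\ref{cor:genusBi} shows this forces $p$ to be an anti-ramification point of one of the finitely many pencils $\eta(x_0)$, and the singularity is the transverse meeting of $Y_{\eta,p}$ with the fibre $C_{x_0}$; a tangent-cone computation in the spirit of Proposition~\ref{lem:sing2} identifies it as a node away from a further closed condition, again confining $Z$ to codimension $\geq 2$.

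The hard part will be the diagonal stratum, and here I would exploit the rigidity already present in Theorem~\ref{cor:genusBi}. One has $\bigl(Y_{\eta,p}\cdot C_p\bigr)_{\tilde{p}}=2$, and $Y_{\eta,p}\cap\overline{\Delta}$ is a divisor of class $\cO_C(2p)$ supported only at $\tilde{p}$, so the symmetrization $q\colon X_{\eta,p}\to Y_{\eta,p}$ is étale off $(p,p)$ and the arithmetic genera satisfy the identity $p_a(X_{\eta,p})=2\,p_a(Y_{\eta,p})$. Passing to normalizations, the induced double cover is étale (the involution on the normalization of $X_{\eta,p}$ is fixed-point-free), and the genus identity becomes $\delta_{(p,p)}(X_{\eta,p})=2\,\delta_{\tilde{p}}(Y_{\eta,p})+1$. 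Since $X_{\eta,p}$ always carries a double point at $(p,p)$ (forced by $\bigl(\Delta\cdot X_{\eta,p}\bigr)_{(p,p)}=2$), any singularity of $Y_{\eta,p}$ at $\tilde{p}$ beyond the mere tangency to $\overline{\Delta}$ would make $\delta_{\tilde p}(Y_{\eta,p})\ge 1$, hence $\delta_{(p,p)}(X_{\eta,p})\ge 3$, i.e. a singularity of $X_{\eta,p}$ strictly worse than a node — exactly the configuration excluded generically in Theorem~\ref{cor:genusBi}. I would argue that this excess imposes at least one vanishing condition beyond the single condition cutting out $W$ along the diagonal, so that the non-nodal diagonal locus has codimension $\geq 2$; the main technical obstacle is precisely this bookkeeping, since the tangency of $Y_{\eta,p}$ to the branch curve $\overline{\Delta}$ means that intersection numbers alone do not distinguish a smooth-tangent point from a node of $Y_{\eta,p}$, and the genus balance must be used to separate them. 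Assembling the three strata then yields $\operatorname{codim}_{\cS_{g,1}^-}Z\geq 2$ and completes the proof.
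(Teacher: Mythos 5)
Your overall strategy --- bound the codimension of the non-nodal locus $Z\subseteq\cS_{g,1}^-$ by $2$ and push it down along the forgetful map --- is a legitimate and genuinely different framing from the paper's. The paper's proof is two sentences: it combines Proposition \ref{lem:sing2} and Theorem \ref{cor:genusBi} directly, noting that for a general $[C,\eta]$ the curve $Y_{\eta,p}$ is \emph{smooth} whenever $p\notin\mathrm{supp}(\eta)$, while for $p\in\mathrm{supp}(\eta)$ it degenerates to the transverse union $\Gamma_{\eta(p)}\cup\overline{\Delta}$ of two smooth curves, hence is nodal. In other words, the paper never estimates codimensions of bad loci upstairs, because the fibrewise behaviour over a general $[C,\eta]$ is already completely determined by the dichotomy $p\in\mathrm{supp}(\eta)$ or not. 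Your treatment of the stratum $p\in\mathrm{supp}(\eta)$ is essentially the paper's argument, and your $\delta$-invariant identity $\delta_{(p,p)}(X_{\eta,p})=2\,\delta_{\tilde p}(Y_{\eta,p})+1$ is a clean reformulation of the genus comparison $p_a(X_{\eta,p})=2p_a(Y_{\eta,p})$ that closes the proof of Theorem \ref{cor:genusBi}.

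There is, however, a genuine gap precisely where you flag ``the main technical obstacle'': the diagonal stratum. To confine $Z$ to codimension $\geq 2$ there, you appeal to the fact that a worse-than-nodal singularity of $X_{\eta,p}$ at $(p,p)$ is ``exactly the configuration excluded generically in Theorem \ref{cor:genusBi}'' and then assert, without proof, that it imposes a second independent condition. This is circular: Theorem \ref{cor:genusBi} excludes that configuration only for a \emph{general} $p$, so it only tells you the locus in question is a proper closed subset of $\cS_{g,1}^-$, i.e.\ of codimension $\geq 1$, which is exactly what your reduction cannot use. Obtaining the extra codimension would require an actual local computation of the defining equation of $Y_{\eta,p}$ near $\tilde p$ as $p$ varies, showing that singularity at $\tilde p$ beyond the forced tangency with $\overline{\Delta}$ is cut out by two independent conditions; nothing of the sort is supplied. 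The same criticism applies, less severely, to the anti-ramification stratum, where the promised ``tangent-cone computation in the spirit of Proposition \ref{lem:sing2}'' is not performed, and where the description of the singularity as the transverse meeting of $Y_{\eta,p}$ with the fibre $C_{x_0}$ does not make sense, since $C_{x_0}$ is not a component of $Y_{\eta,p}$. As written, the proposal establishes $\mathrm{codim}\, Z\geq 1$ but not $\geq 2$, so the final dimension count does not go through.
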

\begin{proof} It suffices to combine Proposition \ref{lem:sing2} and Theorem \ref{cor:genusBi}. It follows that $Y_{\eta, p}$ is smooth when $p\notin \mathrm{supp}(\eta)$, whereas, when $p\in \mathrm{supp}(\eta)$, the curve $Y_{\eta,p}$ is the transverse union of the smooth trace curve $\Gamma_{\eta(p)}$ defined by (\ref{eq:tr2}) and the diagonal $\overline{\Delta}$ of $C^{(2)}$.
\end{proof}

\begin{remark}\label{rem:Bi}
The double cover $q\colon X_{\eta,p}\rightarrow Y_{\eta,p}$ is not admissible in the sense of \cite{HM}. We describe the associated admissible double cover. We take the normalization $\nu\colon X_{\eta,p}'\rightarrow X_{\eta,p}$ and let $\{p^+, p^-\}:=\nu^{-1}\bigl\{(p,p)\bigr\}$. We attach a smooth rational curve $E$ to the smooth curve $Y_{\eta,p}$ at the point $\tilde{p}=2\cdot p\in Y_{\eta, p}$. Then the associated double cover is given by
\begin{equation}\label{eq:admdob6}
\Bigl[f\colon X_{\eta,p}' \cup E'\lra Y_{\eta,p}\cup_{\tilde{p}}  E,\ \  f^{-1}(\tilde{p})=\{p^+, p^-\}\Bigr],
\end{equation}
where $f_{| X_{\eta,p}'}=q\circ \nu$ and $E'$ is a smooth rational curve meeting $X_{\eta,p}'$ at the points $p^+$ and $p^-$ with $f_{|E'}\colon E'\rightarrow E$ being a double cover mapping the points $p^+$ and $p^-$ to $\tilde{p}\in E$.
\end{remark}

\vskip 4pt

With this preparation in place, we now determine the limit Scorza correspondence of a general point of $B_i$ corresponding to pointed curves $[C, p]\in \cM_{i,1}$ and $[D,q]\in \cM_{g-i,1}$ together with odd theta-characteristics $\eta_C$ on $C$ and $\eta_D$ on $D$.
We write $\mathrm{supp}(\eta_C)=\{x_1, \ldots, x_{i-1}\}$ and $\mathrm{supp}(\eta_D)=\{y_1, \ldots, y_{g-i}\}$.

\vskip 4pt

The spin structure $\eta=\bigl(\eta_C, \cO_{E}(1), \eta_D\bigr)$ on the  curve $X:=C\cup_{p\sim 0} E \cup_{q\sim \infty}D$ has a two-dimensional space of global sections. Imposing the simple-minded limit of the condition $h^0 (X, \eta (x-y)) > 0$ on the pair $[X, \eta]$, we obtain that the limit of the Scorza correspondence contains several two-dimensional components. We therefore need to once more take into account that the Scorza correspondence is the inverse image of a curve in the symmetric product of $C\cup D$.

\vskip 3pt

Consider a one-parameter family of even spin curves $(\cX, \eta_{\cX}) \ra (B,0)$ with central fiber $(X, \eta)$ as above and generic fiber a general element of $\cS_g^+$. We choose a point $(x,y)$ of the central fibre lying in the limiting Scorza curve. Suppose first $x,y\in C\setminus \{p\}$. As in the proof of Proposition \ref{prop:Ai}, there exist sections $0\neq \sigma_C\in H^0\bigl(C, \eta_C((g-i)\cdot p+x-y)\bigr)$ and $0\neq \sigma_D\in H^0\bigl(D, \eta_D(i\cdot q)\bigl)$ such that $\mbox{ord}_p(\sigma_C)+\mbox{ord}_q(\sigma_D)\geq g-1$. Since $p\notin \mathrm{supp}(\eta_D)$, we obtain that
$\mbox{ord}_q(\sigma_D)\leq i$, therefore $\mbox{ord}_p(\sigma_C)\geq g-i-1$, which leads to the condition
$$H^0\bigl(C, \eta_C(p+x-y)\bigr)\neq 0,$$
which is automatically satisfied for arbitrary points $x,y\in C$. However, the component of the limiting Scorza correspondence lying in $C\times C$ must also be the pull-back of a curve in $C^{(2)}$, and this curve lies necessarily in $\bigl|\cL_{\eta_C(p)}(\delta_C)\bigr|$, therefore, using Lemma \ref{lemma:etap}, it must be the curve $X_{\eta_C,p}$. A similar argument yields that the component of the limiting Scorza correspondence lying in $D\times D$ is the curve $X_{\eta_D,q}$.

\vskip 4pt

Assume that $x\in C\setminus\{p\}$ and $y\in D\setminus \{q\}$. Then there exist non-zero sections
\begin{align*}
\sigma_C\in H^0\bigl(C, \eta_C((g-i-1)\cdot p+x)\bigr), \ \ \sigma_D\in H^0\bigl(D, \eta_D((i+1)\cdot q-y)\bigr),\\
\mbox{and } \ \sigma_E\in H^0\bigl(E, \cO_{E}((g-i-1)\cdot p+(i+1)\cdot q)\bigr),
\end{align*} with $\mbox{ord}_p(\sigma_C)+\mbox{ord}_p(\sigma_E)\geq g-1$ and $\mbox{ord}_q(\sigma_E)+\mbox{ord}_q(\sigma_D)\geq g-1$. In this situation either $y\in \mathrm{supp}(\eta_D)$, leading to $\mbox{ord}_q(\sigma_D)=i$ and $\mbox{ord}_p(\sigma_C)=g-i-1$ (and $\sigma_E$ being uniquely determined up to a constant by the conditions $\mbox{ord}_p(\sigma_E)=i$ and $\mbox{ord}_q(\sigma_E)=g-i-1$), or else $x\in \mathrm{supp}(\eta_C)$. This accounts for the components $C\times \{y_k\}$, respectively, $\{x_{\ell}\}\times D$ of the limiting Scorza curve.

\vskip 4pt

A similar calculation shows that the cases $x\in C\setminus \{p\}$, $y\in E\setminus\{p,q\}$, or, respectively, $x\in D\setminus \{q\}$, $y\in E\setminus \{p,q\}$ do not appear. Finally, we deal with the case when both $x\neq y$ lie on $E$. Then there exist non-zero sections
\begin{align*}
\sigma_C\in H^0\bigl(C, \eta_C((g-i)\cdot p\bigr), \ \ \sigma_D\in H^0\bigl(D, \eta_D(i\cdot q)\bigr)\\
\mbox{ and } \sigma_E\in H^0\bigl(E, \cO_E(1)((g-i-1)\cdot p+(i-1)\cdot q+x-y)\bigr).
\end{align*}
We get $\mbox{ord}_p(\sigma_C)\leq g-i$ and $\mbox{ord}_q(\sigma_D)\leq i$, therefore $\mbox{div}(\sigma_E)\geq (i-1)\cdot p+(g-i-1)\cdot q$.
Given $x\in E$, the point $y$ is uniquely determined by the condition that $\sigma_E$ vanishes at $y$.

\vskip 3pt

The curves $X_{\eta_C,p}$ and $X_{\eta_D,q}$ are nodal at the points $(p,p)$ and $(q,q)$, which get identified. We consider their normalizations $X_{\eta_C,p}'$ and $Y_{\eta_D,q}'$ with the points $p^+, p^-\in X_{\eta_C,p}$, respectively, $q^+,q^-\in X_{\eta_D,q}$, as described in Remark \ref{rem:Bi}.  We join $X_{\eta_C,p}'$
and $X_{\eta_D,q}'$ by identifying $p^+$ and $q^+$, respectively, $p^-$ and $q^-$.

\vskip 3pt

\begin{proposition}\label{prop:limitBi}
The limit Scorza curve at a generic point $[C\cup D, \eta_C, \eta_D]$ of $B_i$ is the transverse union of the following curves in the Cartesian power $C^2 \cup C\times D \cup D\times C \cup D^2$.

\begin{align*}
X_{\eta_C,p}' \ \ \bigcup \ \ \Bigl((\eta_C \times D) \cup (C \times \eta_D)\Bigr) \
\bigcup \ \Bigl((D\times \eta_C)\cup (\eta_D \times C)\Bigr) \ \ \bigcup \ \ X_{\eta_D,q}'.
\end{align*}
The decomposition corresponds to the components of the Scorza curve in $C\times C$, $C\times D$, $D\times C$ and $D\times D$ (in this order).
\end{proposition}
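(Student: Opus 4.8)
The plan is to run the degeneration-and-stable-reduction scheme already used in the proof of Theorem \ref{propthetanull2limit}, now organized by the component analysis carried out in the paragraphs preceding the statement. Concretely, I would fix a general smoothing $(\cX,\eta_{\cX})\to(B,0)$ whose central fibre is the spin curve $X=C\cup_{p}E\cup_{q}D$ of the hypothesis and whose generic fibre is a general point of $\cS_g^+$, equipped with two sections $\overline{x},\overline{y}\colon B\to\cX$ cutting out a point of the relative Scorza correspondence. The flat limit of $S(X_b,\eta_b)$ is symmetric, so by Definition \ref{def:limitscorza} it is the $q$-preimage of a divisor in the relevant linear system on the symmetric product of $X$. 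The proof then decomposes $X\times X$ into the products of the components $C,E,D$ and records the position of $(\overline{x}(0),\overline{y}(0))$ on each piece.

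First I would assemble the four genuine components. When $(x,y)$ lands in $C\setminus\{p\}$ in both coordinates, the key point is that the limiting curve in $C\times C$ is the pull-back of a member of $\bigl|\cL_{\eta_C(p)}(\delta)\bigr|$; by Lemma \ref{lemma:etap} this system consists of the single curve $Y_{\eta_C,p}$, so the component is forced to be $X_{\eta_C,p}$, and not the spurious two-dimensional locus cut out by the naive condition $h^0\bigl(X,\eta(x-y)\bigr)\neq 0$. The same argument on $D$ produces $X_{\eta_D,q}$, and by Theorem \ref{cor:genusBi} both curves are smooth away from a single node, at $(p,p)$ and $(q,q)$ respectively. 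In the mixed range $x\in C$, $y\in D$ (and symmetrically), the vanishing-order inequality $\mathrm{ord}_p(\sigma_C)+\mathrm{ord}_q(\sigma_D)\geq g-1$ together with $p\notin\mathrm{supp}(\eta_D)$ forces $y\in\mathrm{supp}(\eta_D)$ or $x\in\mathrm{supp}(\eta_C)$, yielding exactly the rulings $C\times\eta_D$ and $\eta_C\times D$ and their transposes; the remaining positions, with $x$ or $y$ on $E$ and the other on $C$ or $D$, are excluded by the analogous Mayer--Vietoris computation.

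The subtler point, which I expect to be the main obstacle, is to pass from this set-theoretic limit inside $X\times X$ to the stable curve, and in particular to understand how the two nodes of $X_{\eta_C,p}$ and $X_{\eta_D,q}$ interact. Here the locus with $x\neq y$ both lying on $E$ is decisive: it is a single section of $E\times E\cong\PP^1\times\PP^1$ joining the two corners of the square, and under the stable-reduction procedure of Theorem \ref{propthetanull2limit} it is precisely the bridge that realizes the admissible double cover of Remark \ref{rem:Bi}. Following that remark, I would normalize $X_{\eta_C,p}$ and $X_{\eta_D,q}$ at their nodes, obtaining $X_{\eta_C,p}'$ and $X_{\eta_D,q}'$ with node preimages $\{p^+,p^-\}$ and $\{q^+,q^-\}$, and then glue $p^+\sim q^+$, $p^-\sim q^-$, attaching the rulings along the supports of $\eta_C$ and $\eta_D$ exactly where the order-of-vanishing analysis places them. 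Verifying that every incidence is nodal and that the union is transverse reduces, by the genericity of $[C,\eta_C,p]$ and $[D,\eta_D,q]$, to the transversality statements already proved in Theorem \ref{cor:genusBi} and Corollary \ref{corr:fibre_minus}.

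Finally I would record a numerical consistency check in the style of Remark \ref{rem:Ai}: adding the geometric genera of $X_{\eta_C,p}'$ and $X_{\eta_D,q}'$ to the contributions of the copies of $C$ and $D$ indexed by $\mathrm{supp}(\eta_D)$ and $\mathrm{supp}(\eta_C)$, and subtracting the total number of nodes (including the extra loop created by the two-point gluing $p^\pm\sim q^\pm$), should return $p_a=1+3g(g-1)$. This both confirms that no component has been overlooked and certifies that the limit is reduced along $X_{\eta_C,p}$ and $X_{\eta_D,q}$, completing the identification.
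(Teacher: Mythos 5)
Your proposal is correct and follows essentially the same route as the paper: the degeneration with sections, the use of Lemma \ref{lemma:etap} to force the $C\times C$ and $D\times D$ components to be $X_{\eta_C,p}$ and $X_{\eta_D,q}$ rather than the naive two-dimensional loci, the vanishing-order analysis producing the rulings indexed by $\mathrm{supp}(\eta_C)$ and $\mathrm{supp}(\eta_D)$, the gluing $p^{\pm}\sim q^{\pm}$ coming from the component over $E\times E$, and the arithmetic-genus check of Remark \ref{rem:Bi}. Your reading of the $E\times E$ locus as the bridge realizing the admissible cover of Remark \ref{rem:Bi} is in fact slightly more explicit than what the paper writes at that step.
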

\begin{remark}\label{rem:Bi}
The  curve exhibited in Theorem \ref{prop:limitBi} has $\nu=2(i-1+g-i-1)+2=2g-2$ irreducible components and $\delta=2(i-1)(g-i-1)+2(i-1+g-i-1)+2=2i(g-i)$ nodes. The sum of the genera of its irreducible components equals
\begin{align*}
\sigma=p_a\bigl(X'_{\eta_C, p}\bigr)+p_a\bigl(X'_{\eta_D, q}\bigr)+2(i-1)(g-i)+2(g-i-1)i.
\end{align*}
Taking into account that $p_a\bigl(X'_{\eta_C, p}\bigr)=3i^2+i-1$ and $p_a\bigl(X'_{\eta_D, q}\bigr)=3(g-i)^2+g-i-1$, we verify that the arithmetic genus of the limiting Scorza curve equals $1+\sigma-\nu+\delta=1+3g(g-1)$.
\end{remark}

\section{The weight of the Szeg\H{o}-Hodge class on $\ss_g^+$}

We shall determine the Szeg\H{o}-Hodge class on $\ss_g^+$, that is, the pull-back of the Hodge class under the Scorza map $\mathfrak{Sc}\colon \ss_g^+\dashrightarrow \mm_{1+3g(g-1)}$. We recall that via Diagram (\ref{eq:diagram_stacks}), the morphism $\mathfrak{Sc}$ factors through the morphism
\begin{equation}\label{eq:def_xi}
\xi\colon \ss_g^+\dashrightarrow \rr_{1+\frac{3}{2}g(g-1)}, \ \ \  [C, \eta]\mapsto \bigl[S(C,\eta)\rightarrow T(C,\eta)\bigr],
\end{equation} that is, $\chi \circ \xi=\mathfrak{Sc}$.
We denote by $\lambda'$, $\delta_i'$ the standard generators of $\mbox{Pic}(\mm_{1+3g(g-1)})$,  where $i=0, \ldots, \left \lfloor \frac{1+3g(g-1)}{2}\right \rfloor$.  Furthermore, let $\lambda^{\mathrm{pr}}\in \mbox{Pic}\bigl(\rr_{1+\frac{3}{2}g(g-1)}\bigr)$ be the Hodge-Prym class and  $\delta_0', \delta_0''$ and $\delta_0^{\mathrm{ram}}$ be the boundary classes on $\rr_{1+\frac{3}{2}g(g-1)}$ corresponding to Prym structures having an irreducible underlying curve, see \cite[Example 1.4]{FL} for details. In particular $\delta_0^{\mathrm{ram}}$ is the class of the ramification divisor $\Delta_0^{\mathrm{ram}}$ of the  morphism $\rr_{1+\frac{3}{2}g(g-1)}\rightarrow \mm_{1+\frac{3}{2}g(g-1)}$ and its general point corresponds to an admissible double cover $f\colon Y'\rightarrow Y$, where $Y$ is an irreducible stable curve of genus $1+\frac{3}{2}g(g-1)$ having a single node $n\in Y$, whereas $Y'$ is an irreducible stable curve of arithmetic genus $1+3g(g-1)$ having a single node $n'\in Y'$ and such that $f^{-1}(n)=\{n'\}$.   We write

\begin{equation}\label{eq:szegohodge}
\lambda_{\mathrm{SzH}}:=\mathfrak{Sc}^*(\lambda')=c_{\lambda}\cdot \lambda-\sum_{i=0}^{\left\lfloor \frac{g}{2}\right\rfloor} \bigl(c_{\alpha_i}\cdot \alpha_i+ c_{\beta_i}\cdot \beta_i) \in \mathrm{Pic}(\ss_g^+).
\end{equation}

In this section we determine the coefficient $c_{\lambda}$ in this expression. We denote by $\cS_g^{\circ}$ the open substack of $\cS_g^+$ consisting of smooth spin curves $[C, \eta]$ such that $h^0(C, \eta)\leq 2$. It is well known, see \cite[Theorems 2.13, 2.18]{T1} that $\mbox{codim}\bigl(\cS_g^+\setminus \cS_g^{\circ}, \cS_g^+)\geq 5$, therefore replacing $\cS_g^+$ by $\cS_g^{\circ}$ has no effect on any codimension one calculation on $\cS_g^+$. Let $\rho\colon \cC_g^{\circ}\rightarrow \cS_g^{\circ}$ be the (restriction of the) universal spin curve and by $\eta_g\in \mbox{Pic}(\cC_g^{\circ})$ the (restriction of the) universal spin bundle. Let $\mathbb E$ be the Hodge bundle on $\cS_g$ and we also introduce the rank $2g-2$ vector bundle on $\cS_g$

\begin{equation}\label{eq:szh}
\mathbb E_{3}:=\rho_*\bigl(\omega_{\rho}\otimes \eta_g\bigr), \ \   \       \mathbb E_{3 \bigl| [C, \eta]}\cong H^0(C, \omega_C\otimes \eta)\cong H^0(C, \eta^{\otimes 3}), \ \mbox{ for any } [C, \eta]\in \cS_g.
\end{equation}

Denoting by $\mathbb E'$ the Hodge bundle on $\mm_{1+3g(g-1)}$ and by $\mathfrak{Sc}^{\circ}\colon \mathcal{S}_g^{\circ}\dashrightarrow \mm_{1+3g(g-1)}$ the restriction of the Scorza map $\mathfrak{Sc}$, the Szeg\H{o}-Hodge bundle is then, by definition, the pull-back
$$\mathbb E_{\mathrm{SzH}}=\bigl(\mathfrak{Sc}^{\circ}\bigr)^*(\mathbb E').$$
Slightly abusing the notation, in this proof we consider the restriction $\xi \colon \mathcal{S}_g^{\circ}\rightarrow \rr_{1+\frac{3}{2}g(g-1)}$ of the morphism $\xi$ to the open substack $\mathcal{S}_g^{\circ}$. We shall explicitly describe $\mathbb E_{\mathrm{SzH}}$ in terms of a tautological bundle on $\mathcal{S}_g^{\circ}$ defined in terms of the geometry of the symmetric square of curves.

We denote by $\mathbb E^{\mathrm{pr},+}$ (respectively by $\mathbb E^{\mathrm{pr},-}$) the invariant (respectively anti-invariant) part of the Hodge bundle on $\rr_{1+\frac{3}{2}g(g-1)}$. Using \cite[Proposition 4.1]{FL}, we obtain

\begin{equation}\label{eq:antiinv}
c_1(\mathbb E^{\mathrm{pr},+})=\lambda^{\mathrm{pr}},  \ \ \mbox{ } \ \ c_1(\mathbb E^{\mathrm{pr},-})=\lambda^{\mathrm{pr}}-\frac{\delta_0^{\mathrm{ram}}}{4} \ \ \mbox{ and } \ \ \chi^*(\lambda')=2\lambda^{\mathrm{pr}}-\frac{\delta_0^{\mathrm{ram}}}{4}.
\end{equation}

Furthermore, we introduce the invariant (respectively anti-invariant) part of the Szeg\H{o}-Hodge bundle, by setting
\begin{equation}\label{eq:inv_szeg}
\mathbb E_{\mathrm{SzH}}^{+}:=\xi^*\bigl(\mathbb E^{\mathrm{pr}, +}\bigr) \ \ \mbox{ and } \ \ \mathbb E_{\mathrm{SzH}}^{-}:=\xi^*\bigl(\mathbb E^{\mathrm{pr}, -}\bigr)
\end{equation}.

We shall separately determine the first Chern classes of the bundles $\mathbb E_{\mathrm{SzH}}^{+}$ and $\mathbb E_{\mathrm{SzH}}^{-}$.

\begin{proposition}\label{prop:inszeg}
One has $c_1\bigl(\mathbb E_{\mathrm{SzH}}^{+}\bigr)=\frac{20g-7}{2}\lambda\in \mathrm{Pic}(\ss_g^+)$.
\end{proposition}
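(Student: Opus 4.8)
The plan is to realize $\mathbb{E}_{\mathrm{SzH}}^{+}=\xi^{*}\mathbb{E}^{\mathrm{pr},+}$ as the Hodge bundle of the family of quotient Scorza curves $T(C,\eta)$ and to evaluate it by adjunction on the relative symmetric square. Over $\cS_g^{\circ}$ let $\mathcal{T}\subseteq \cC^{(2)}_g$ be the relative limiting Scorza correspondence of Definition \ref{def:limitscorza}, whose fibre over $[C,\eta]$ is $T(C,\eta)\in|\cL_{\eta}(\delta)|$, with induced family $\pi_{\mathcal{T}}\colon\mathcal{T}\to\cS_g^{\circ}$. Since the invariant part of the Prym--Hodge bundle of the cover $S(C,\eta)\to T(C,\eta)$ is exactly $H^{0}\bigl(T(C,\eta),\omega_{T(C,\eta)}\bigr)$, we have $\mathbb{E}_{\mathrm{SzH}}^{+}\cong (\pi_{\mathcal{T}})_{*}\omega_{\pi_{\mathcal{T}}}$. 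First I would identify $\omega_{\pi_{\mathcal{T}}}$ by adjunction: using $\omega_{\cC^{(2)}_g/\cS_g^{\circ}}\cong \cL_{\omega_{\rho}}(-\delta_g)$ together with $\cO_{\cC^{(2)}_g}(\mathcal{T})\cong \cL_{\eta_g}(\delta_g)$ (which is precisely what the normalization of $\eta_g$ achieves), one gets $\omega_{\pi_{\mathcal{T}}}\cong \cL_{\omega_{\rho}\otimes\eta_g}|_{\mathcal{T}}$, fitting into
\[
0\longrightarrow \omega_{\cC^{(2)}_g/\cS_g^{\circ}}\longrightarrow \cL_{\omega_{\rho}\otimes\eta_g}\longrightarrow \omega_{\pi_{\mathcal{T}}}\longrightarrow 0.
\]

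Pushing this sequence forward to $\cS_g^{\circ}$ and comparing first Chern classes in $K$-theory (the term $R^{1}(\pi_{\mathcal{T}})_{*}\omega_{\pi_{\mathcal{T}}}\cong\bigl((\pi_{\mathcal{T}})_{*}\cO_{\mathcal{T}}\bigr)^{\vee}\cong\cO$ contributes nothing), I reduce the statement to
\[
c_{1}(\mathbb{E}_{\mathrm{SzH}}^{+})=c_{1}\bigl((\rho_{2})_{*}\cL_{\omega_{\rho}\otimes\eta_g}\bigr)-c_{1}\bigl(R(\rho_{2})_{*}\omega_{\cC^{(2)}_g/\cS_g^{\circ}}\bigr).
\]
Both terms are evaluated with the relative form of the cohomology formulas \eqref{eq:cohsympr}. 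For the first, since $H^{>0}(C,\omega_{C}\otimes\eta)=0$ one has $(\rho_{2})_{*}\cL_{\omega_{\rho}\otimes\eta_g}\cong \mathrm{Sym}^{2}\mathbb{E}_{3}$ with vanishing higher direct images, so its first Chern class is $(2g-1)\,c_{1}(\mathbb{E}_{3})$, where $\mathbb{E}_{3}$ is as in \eqref{eq:szh}. For the second, writing $\omega_{\cC^{(2)}_g/\cS_g^{\circ}}=\cL_{\omega_{\rho}}(-\delta_g)=\cL_{\omega_{\rho}}'$ and using the relative versions $R^{0}=\bigwedge^{2}\mathbb{E}$, $R^{1}=(R^{1}\rho_{*}\omega_{\rho})\otimes\mathbb{E}=\mathbb{E}$, $R^{2}=\mathrm{Sym}^{2}(R^{1}\rho_{*}\omega_{\rho})=\cO$ (equivalently, relative Serre duality against $R(\rho_{2})_{*}\cO$), I obtain $c_{1}\bigl(R(\rho_{2})_{*}\omega_{\cC^{(2)}_g/\cS_g^{\circ}}\bigr)=(g-1)\lambda-\lambda=(g-2)\lambda$.

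It then remains to compute $c_{1}(\mathbb{E}_{3})$ by Grothendieck--Riemann--Roch for $\rho\colon\cC_g\to\cS_g^{\circ}$ applied to $\omega_{\rho}\otimes\eta_g$. Writing $\kappa=c_{1}(\omega_{\rho})$ and $\zeta=c_{1}(\eta_g)$, the degree-two part of $\mathrm{ch}(\omega_{\rho}\otimes\eta_g)\,\mathrm{td}(\omega_{\rho}^{\vee})$ is $\tfrac12(\zeta^{2}+\kappa\zeta)+\tfrac1{12}\kappa^{2}$; pushing forward with Mumford's relation $\rho_{*}(\kappa^{2})=12\lambda$ (valid on the smooth locus), with $\rho_{*}(\kappa)=2g-2$, and with $2\zeta=\kappa$ modulo pullbacks from the base, gives $c_{1}(\mathbb{E}_{3})=\tfrac{11}{2}\lambda$. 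Substituting, $c_{1}(\mathbb{E}_{\mathrm{SzH}}^{+})=(2g-1)\tfrac{11}{2}\lambda-(g-2)\lambda=\tfrac{20g-7}{2}\lambda$, as claimed.

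The delicate point — and the step I expect to cost the most — is controlling the class $\zeta$, i.e. the normalization of the universal spin bundle. A priori $\eta_g^{\otimes2}\cong\omega_{\rho}\otimes\rho^{*}N$ for some $N\in\mathrm{Pic}(\cS_g^{\circ})$, so $\zeta=\tfrac12(\kappa+\rho^{*}c_{1}(N))$ and $c_{1}(\mathbb{E}_{3})$ picks up a correction $(g-1)c_{1}(N)$. I would show this correction does not affect the coefficient of $\lambda$. The normalization forces $(\rho_{2})_{*}\cL_{\eta_g}(\delta_g)\cong\cO$, and over the locus where $h^{0}(C,\eta)=0$ the restriction sequence $0\to\eta_g\boxtimes\eta_g\to\eta_g\boxtimes\eta_g(\Delta)\to\cO_{\Delta}\to0$ on $\cC^{2}_g$ — combined with the vanishing of $R^{\bullet}$ of $\eta_g\boxtimes\eta_g$ there (which follows from $H^{\bullet}(C,\eta)=0$ by K\"unneth) — identifies this push-forward canonically with $\rho_{*}\cO_{\cC_g}\cong\cO$. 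Hence $N$ may be taken trivial up to classes supported on $\thet$, which feed only into the separately computed theta-null and boundary coefficients and leave the $\lambda$-coefficient equal to $\tfrac{20g-7}{2}$. The remaining care is entirely in the relative K\"unneth and Serre-duality formulas on $\cC^{(2)}_g$, for which I would invoke the computations in the Appendix of \cite{I}.
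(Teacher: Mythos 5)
Your proposal is correct and follows essentially the same route as the paper: the paper likewise identifies $\mathbb E^{+}_{\mathrm{SzH}}$ fibrewise with $H^0\bigl(T(C,\eta),\omega_{T(C,\eta)}\bigr)$, uses adjunction and the restriction sequence twisted by $\cL_{\eta^{\otimes 3}}$ to get the four-term exact sequence $0\to\bigwedge^2\mathbb E\to\mathrm{Sym}^2\mathbb E_3\to\mathbb E^{+}_{\mathrm{SzH}}\to\mathbb E\to 0$ (your K-theoretic bookkeeping with $R(\rho_2)_*\omega_{\cC_g^{(2)}/\cS_g^{\circ}}$ is the same computation), and evaluates $c_1(\mathbb E_3)=\frac{11}{2}\lambda$ by the identical Grothendieck--Riemann--Roch calculation. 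Your closing paragraph on the normalization of $\eta_g$ is extra care that the paper dispatches implicitly via $2c_1(\eta_g)=c_1(\omega_{\rho})$, and it does not change the argument.
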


\begin{proof}
We start with a point $[C,\eta]\in \cS_g^{\circ}$. Via Definition \ref{def:limitscorza}, we introduced the double cover $S(C,\eta)\rightarrow T(C,\eta)$, where $T(C,\eta)$ is the only curve of the linear system $\bigl|\cL_{\eta}(\delta)\bigr|$ on $C^{(2)}$. We have the canonical identification of the fibre of $\mathbb E_{\mathrm{SzH}}^{+}$ over a point $[C, \eta]\in \cS_g^{\circ}$

$$\mathbb E^{+}_{\mathrm{SzH} \bigl|[C, \eta]} \cong H^0\bigl(T(C,\eta), \omega_{T(C,\eta)}\bigr).$$

\vskip 3pt

By the adjunction formula
$$\omega_{T(C,\eta)}\cong \omega_{C^{(2)}} \bigl(T(C,\eta))_{\bigl |T(C,\eta)}\cong \cL_{\omega_C\otimes \eta}(-\delta+\delta)_{\bigl | T(C, \eta)} \cong \cL_{\eta^{\otimes 3}\bigl | T(C,\eta)}.$$
Twisting by $\cL_{\eta^{\otimes 3}}$ the exact sequence

\begin{equation}\label{eq:ex_seq2}
0\longrightarrow \cO_{C^{(2)}}(-T(C,\eta))\lra \cO_{C^{(2)}}\lra \cO_{T(C,\eta)}\lra 0,
\end{equation}
then taking cohomology, we obtain the following long exact sequence:
\begin{align*}
0\lra H^0\bigl(C^{(2)}, \cL_{\omega_C}(-\delta)\bigr)\lra H^0\bigl(C^{(2)}, \cL_{\eta^{\otimes 3}}\bigr)\lra H^0\bigl(T(C,\eta),\omega_{T(C,\eta)}\bigr)\\
\lra H^1\bigl(C^{(2)}, \cL_{\omega_C}(-\delta)\bigr)\lra H^1\bigl(C^{(2)}, \cL_{\eta^{\otimes 3}}\bigr)\lra 0.
\end{align*}
Using the  identifications (\ref{eq:cohsympr}), we obtain the following natural exact sequence:
$$0\longrightarrow \bigwedge^2 H^0(\omega_C)\lra \mbox{Sym}^2 H^0(\omega_C\otimes \eta)\lra H^0(\omega_{T(C,\eta)})\lra H^1(\omega_C)\otimes H^0(\omega_C)\lra 0,$$
which globalizes to the following exact sequence of vector bundles on $\cS_g^{\circ}$:
\begin{equation}\label{eq:exseqsz}
0\lra \bigwedge^2 \mathbb E\lra \mbox{Sym}^2 \mathbb E_{3} \lra \mathbb E_{\mathrm{SzH}}^{+} \lra \mathbb E\lra 0.
\end{equation}

In order to estimate the first Chern class of $\mathbb E_{3}$ (which was defined via (\ref{eq:szh})), we apply Grothendieck-Riemann-Roch to the universal spin curve $\rho\colon \cC_g^{\circ}\rightarrow \cS_g^{\circ}$ and write

\begin{align*}
c_1\bigl(\mathbb E_{3}\bigr)=c_1\bigl(\rho_{!}(\omega_{\rho}\otimes \eta_g)\bigr)=\rho_*\Bigl[\Bigl(1+c_1(\omega_{\rho})+c_1(\eta_g)+\frac{(c_1(\omega_{\rho})+c_1(\eta_g))^2}{2}\Bigr)\cdot \\
\Bigl(1-\frac{c_1(\omega_{\rho})}{2}+\frac{c_1^2(\omega_{\rho})}{12}\Bigr)\Bigl]_2=\frac{11}{24}\rho_*\bigl(c_1^2(\omega_{\rho})\bigr)=
\frac{11}{24}\rho_*\bigl(c_1^2(\omega_{\rho})\bigr)=\frac{11}{2}\lambda,
\end{align*}
where we have used that $2c_1(\eta_g)=c_1(\omega_{\rho})$, as well as Mumford's formula $\rho_*\bigl(c_1^2(\omega_{\rho})\bigr)=12\lambda$.
From the sequence (\ref{eq:exseqsz}), using that $c_1\bigl(\mbox{Sym}^2 \mathbb E_{3}\bigr)=(2g-1)c_1\bigl(\mathbb E_{3}\bigr)$, we then compute that
$$
\xi^*(\lambda^{\mathrm{pr}})=c_1\bigl(\mathbb E_{\mathrm{SzH}}^{+}\bigr)=\lambda-c_1\bigl(\bigwedge^2 \mathbb E\bigr)+c_1\bigl(\mathrm{Sym}^2 \mathbb E_{3}\bigr)\\
=\lambda-(g-1)\lambda+(2g-1)\frac{11}{2}\lambda=\frac{20g-7}{2} \lambda.
$$
This completes the proof.
\end{proof}

We can now determine the weight of the Szeg\H{o}-Hodge bundle on $\ss_g^+$, that is, the $\lambda$-coefficient $c_{\lambda}$ in the expression (\ref{eq:szegohodge}) of the class $\lambda_{\mathrm{SzH}}$.

\begin{theorem}\label{eq:lambdaszego}
Sections of the Szeg\H{o}-Hodge bundle are Teichm\"uller modular forms of weight $\frac{77g-25}{4}$ on $\cS_g^+$, that is, $c_{\lambda}=\frac{77g-25}{4}$.
\end{theorem}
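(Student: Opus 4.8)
The plan is to split $\lambda_{\mathrm{SzH}}$ according to the decomposition of the Hodge bundle of the source of the Scorza double cover into its invariant and anti-invariant parts under the sheet-swapping involution. Pulling back the relation $\chi^*(\lambda')=2\lambda^{\mathrm{pr}}-\frac14\delta_0^{\mathrm{ram}}$ of (\ref{eq:antiinv}) along $\xi$ and using $\mathfrak{Sc}=\chi\circ\xi$, I would write $\lambda_{\mathrm{SzH}}=c_1(\mathbb{E}^+_{\mathrm{SzH}})+c_1(\mathbb{E}^-_{\mathrm{SzH}})$. Since $\xi^*(\lambda^{\mathrm{pr}})=c_1(\mathbb{E}^+_{\mathrm{SzH}})$ and $c_1(\mathbb{E}^-_{\mathrm{SzH}})=\xi^*(\lambda^{\mathrm{pr}})-\frac14\xi^*(\delta_0^{\mathrm{ram}})$, Proposition \ref{prop:inszeg} gives $c_1(\mathbb{E}^+_{\mathrm{SzH}})=\frac{20g-7}{2}\lambda$ and reduces everything to $\lambda_{\mathrm{SzH}}=(20g-7)\lambda-\frac14\,\xi^*(\delta_0^{\mathrm{ram}})$. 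The coefficient $c_\lambda$ is computed on the open substack $\cS_g^{\circ}$ of smooth spin curves, where all boundary classes vanish and the rational Picard group is generated by $\lambda$; the task is therefore to show $\xi^*(\delta_0^{\mathrm{ram}})=3(g-1)\lambda$, which yields $c_\lambda=(20g-7)-\frac{3(g-1)}{4}=\frac{77g-25}{4}$.

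As a direct route to $c_1(\mathbb{E}^-_{\mathrm{SzH}})$ and a way to locate the term $\xi^*(\delta_0^{\mathrm{ram}})$, I would first compute the anti-invariant bundle away from $\Theta_{\mathrm{null}}$, where the cover $S(C,\eta)\to T(C,\eta)$ is \'etale and induced by the $2$-torsion bundle $\delta|_{T(C,\eta)}$, so that $\mathbb{E}^-_{\mathrm{SzH}}|_{[C,\eta]}\cong H^0\bigl(T(C,\eta),\omega_{T(C,\eta)}\otimes\delta|_{T(C,\eta)}\bigr)$. Using the adjunction identity $\omega_{T(C,\eta)}\cong\cL_{\eta^{\otimes 3}}|_{T(C,\eta)}$ from the proof of Proposition \ref{prop:inszeg}, together with $\delta|_{\overline{\Delta}}\cong\omega_C^{-1}$ and $\cO_{\overline{\Delta}}(\cL_M)\cong M^{\otimes 2}$, I would resolve this space through the exact sequences on $C^{(2)}$ and the identifications (\ref{eq:cohsympr}), obtaining a filtration with graded pieces $\bigwedge^2\mathbb{E}_3$, $\mathbb{E}_2:=\rho_*(\omega_\rho^{\otimes 2})$, $\mathrm{Sym}^2\mathbb{E}$ and $\mathbb{E}$. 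Grothendieck--Riemann--Roch gives $c_1(\mathbb{E}_2)=13\lambda$ and $c_1(\mathbb{E}_3)=\frac{11}{2}\lambda$, and a short bookkeeping then produces the \emph{ambient} value $\frac{20g-7}{2}\lambda$; the numerical coincidence with $c_1(\mathbb{E}^+_{\mathrm{SzH}})$ reflects that this computation ignores the behaviour over $\Theta_{\mathrm{null}}$.

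The subtle and decisive point is that the group $H^0(T(C,\eta),\omega_{T(C,\eta)}\otimes\delta|_{T(C,\eta)})$ is built from the naive preimage $q^{-1}(T(C,\eta))$, which over a general point of $\Theta_{\mathrm{null}}$ is the \emph{non-reduced} curve $2\Delta+\widetilde{\Gamma}_\eta$ appearing in the proof of Theorem \ref{propthetanull2limit}, whereas the genuine anti-invariant Hodge bundle $\mathbb{E}^-_{\mathrm{SzH}}$ is read off from the stable model, namely the admissible cover $\widetilde{\Gamma}_\eta\cup\widetilde{C}_\eta$. These two locally free sheaves agree on $\cS_g^{+}\setminus\Theta_{\mathrm{null}}$ and differ along $\Theta_{\mathrm{null}}$ precisely by $-\frac14\xi^*(\delta_0^{\mathrm{ram}})$. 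I expect the main work to be the local stable-reduction computation at the $4g-4$ diagonal points $(x_i,x_i)$ --- the order-two base change, blow-ups and normalization of the proof of Theorem \ref{propthetanull2limit} --- which measures the multiplicity of $\Theta_{\mathrm{null}}$ in $\xi^*(\delta_0^{\mathrm{ram}})$. The anti-invariant sections missed by the ambient count are exactly those carried by the new component $\widetilde{C}_\eta$, the canonical double cover of $C$ branched at $x_1,\dots,x_{4g-4}$, whose Prym variety has dimension $g(\widetilde{C}_\eta)-g=(4g-3)-g=3(g-1)$. Tracking this contribution, and using that $[\Theta_{\mathrm{null}}]$ restricts to $\frac14\lambda$ on $\cS_g^{\circ}$, should give $\xi^*(\delta_0^{\mathrm{ram}})=3(g-1)\lambda$ and hence the stated value $c_\lambda=\frac{77g-25}{4}$.
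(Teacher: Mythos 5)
Your proposal is correct and follows essentially the same route as the paper: the decomposition $\lambda_{\mathrm{SzH}}=c_1(\mathbb E^+_{\mathrm{SzH}})+c_1(\mathbb E^-_{\mathrm{SzH}})$, the ambient computation of the bundle with fibres $H^0(T(C,\eta),\omega_{T(C,\eta)}\otimes\delta|_{T(C,\eta)})$ giving $\frac{20g-7}{2}\lambda$ through the same filtration, and the correction $(3g-3)[\Theta_{\mathrm{null}}]$ along the theta-null divisor (equivalently $\xi^*(\delta_0^{\mathrm{ram}})=12(g-1)[\Theta_{\mathrm{null}}]=3(g-1)\lambda$ on $\cS_g^{\circ}$) coming from the anti-invariant differentials carried by the component $\widetilde{C}_{\eta}$, exactly as in the paper's evaluation sequence on the blow-up of the universal family. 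Your heuristic identification of the multiplicity $3g-3$ with $h^0(C,\omega_C^{\otimes 2})=\dim\mathrm{Prym}(\widetilde{C}_{\eta}/C)$ is precisely the mechanism the paper makes rigorous.
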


\begin{proof}
We are going to compute the first Chern class of the vector bundle $\xi^*\bigl(\mathbb E^{\mathrm{pr},-}\bigr)$, which, coupled with Proposition \ref{prop:inszeg}, via (\ref{eq:antiinv}), amounts to determining the class $\xi^*(\delta^{\mathrm{ram}})$.
The essential part in the proof is establishing that

\begin{equation}\label{eq:pull_back_thet}
\xi^*\bigl(\delta_0^{\mathrm{ram}}\bigr)=12(g-1)[\Theta_{\mathrm{null}}]\in \mbox{Pic}(\mathcal{S}_g^{\circ}).
\end{equation}

To that end, we first note that there is a set-theoretic equality $\xi^*(\Delta_0^{\mathrm{ram}})=\Theta_{\mathrm{null}}$. Indeed, given a spin curve $[C, \eta]\in \cS_g^{\circ}$, assuming that $\xi([C,\eta]) \in \Delta_0^{\mathrm{ram}}$, necessarily the associated cover $S(C,\eta) \rightarrow T(C,\eta)$ is ramified, in particular $S(C,\eta)$ intersects the diagonal $\Delta$, therefore we obtain $h^0(C,\eta)\geq 2$. The converse inclusion has been established in Theorem \ref{propthetanull2limit}. To complete the proof of the claim, we  explicitly describe the pull-back $\xi^*\bigl(\mathbb E^{\mathrm{pr},-})$.

\vskip 4pt

We denote by $Z$ the subvariety of the universal symmetric product $\rho_2 :\bigl(\mathcal{C}_g^{\circ}\bigr)^{(2)}\ra \cS^{\circ}_g$ consisting of the triples
$[C, \eta,  2\cdot x]$ such that $h^0(C,\eta)=2$ and  $H^0(C,\eta(-2x))\neq 0$. Furthermore, we let $\mathcal{T}$ be the effective divisor on $\bigl(\mathcal{C}_g^{\circ}\bigr)^{(2)}$ whose fiber at every point $[C,\eta]\in \cS_g^{\circ}$ is $T(C,\eta)$. Clearly $Z\subseteq \mathcal{T}$ and $\mbox{codim}(Z, \mathcal{T})=2$. By slight abuse of notation we denote by $\rho_2=\rho_{2\bigl| \mathcal{T}}\colon \mathcal{T}\rightarrow \cS_g^{\circ}$ the universal curve and $\delta_g :=\delta_{g\bigl|\mathcal{T}}\in \mbox{Pic}(\mathcal{T})$. We consider the following vector bundle on $\cS_g^{\circ}$
\begin{equation}\label{eq:B}
\mathcal{B}:=(\rho_2)_*\bigl(\omega_{\rho_2}\otimes \delta_{g}\bigr), \ \ \mbox{with fibres } \cB_{\bigl |[C, \eta]}=H^0\bigl(C^{(2)}, \omega_{T(C,\eta)}\otimes \delta_{|T(C,\eta)}\bigr).
\end{equation}
The fact that $\cB$ is locally free and has this description of its fibres follows from Grauert's theorem, coupled with Proposition \ref{lemeta02}. Let $f\colon \widetilde{\mathcal{T}}\rightarrow \mathcal{T}$ be the blow-up of the subscheme $Z\subseteq \mathcal{T}$ and introduce the universal curve $$\widetilde{\rho}\colon \widetilde{\mathcal{T}}\rightarrow \cS_g^{\circ}, \ \ \  \widetilde{\rho}=\rho_2\circ f.$$
We denote by $\cE$ the exceptional divisor of $f$.

\vskip 3pt

We fix a general point $[C, \eta]\in \Theta_{\mathrm{null}}$ as in Theorem \ref{propthetanull2limit}. In particular, we write again $\mathfrak{Ram}=x_1+\cdots+x_{4g-4}$ for the ramification divisor of the pencil $|\eta|$. Considering the curve $\Gamma_{\eta}$ introduced in (\ref{eq:trace_curve}), we denote by $\tilde{x}_j:=2\cdot x_j\in C^{(2)}$ the points of intersection of $\Gamma_{\eta}$ with the diagonal $\overline{\Delta}$. Identifying $\overline{\Delta}$ with $C$, the image $\xi\bigl([C,\eta]\bigr)$ is then the stable Prym curve
$$\bigl[X=\Gamma_{\eta} \cup E_1 \cup \ldots \cup E_{4g-4} \cup C, \ \epsilon\bigr] \in \rr_{1+\frac{3}{2}g(g-1)},$$
where each $E_j$ is a smooth rational curve meeting $\Gamma_{\eta}$ at $\tilde{x}_j$ and $C$ at $x_j$ for $j=1, \ldots, 4g-4$ (while being disjoint from the other components of $X$), whereas $\epsilon \in \mbox{Pic}^0(X)$ is the line bundle such that $\epsilon_{|E_j}=\cO_{E_j}(1)$ for all $j$,  $\epsilon_{|\Gamma_{\eta}}=\delta^{\vee}_{|\Gamma_{\eta}}\in \mbox{Pic}^{2-2g}(\Gamma_{\eta})$ and $\epsilon_{|C}=\omega_C^{\vee}$.
We have the natural fibrewise identification $\xi^*(\mathbb E^{\mathrm{pr},-})_{\bigl |[C,\eta]}\cong H^0(X, \omega_X\otimes \epsilon)$. Since $\omega_X\otimes \epsilon_{|E_j}\cong \cO_{E_j}(1)$ for all $j$, the evaluation map $\mathrm{ev}_{x_j, \tilde{x}_j}\colon H^0\bigl(E_j, \omega_X\otimes \epsilon_{E_j}\bigr)\rightarrow \mathbb C^2_{x_j, \tilde{x}_j}$ of the Mayer-Vietoris sequence on $X$ is an isomorphism. Therefore, from the Mayer-Vietoris sequence on $X$, we obtain the following canonical identification:
\begin{equation}\label{eq:can_X}
H^0(X,\omega_X\otimes \epsilon) \cong H^0 \bigl(C, \omega_C^{\otimes 2}\bigr) \oplus H^0\bigl(\Gamma_{\eta}, \omega_{\Gamma_{\eta}}\otimes \delta_{\Gamma_{\eta}}\bigr).
\end{equation}
On the other hand, writing down the Mayer-Vietoris sequence on the curve $T(C,\eta) = \Gamma_{\eta}\cup \overline{\Delta}$, taking into account that, via the identification $C\cong \overline{\Delta}$, we have $\cL_{\eta^{\otimes 3}}\otimes \delta_{\bigl |\overline{\Delta}}\cong \omega_C^{\otimes 2}$, we obtain the following exact sequence:

\begin{equation}\label{eq:can_T}
0\longrightarrow H^0\bigl(\omega_{T(C, \eta)}\otimes \delta\bigr) \longrightarrow H^0(C, \omega_C^{\otimes 2})\oplus H^0\Bigl(\Gamma_{\eta}, \omega_{\Gamma_{\eta}}\otimes \delta_{\Gamma_{\eta}}\bigl(\sum_{j=1}^{4g-4} \tilde{x}_j\bigr)\Bigr)\twoheadrightarrow \bigoplus_{j=1}^{4g-4} \mathbb C_{\tilde{x}_i}.
\end{equation}

\vskip 3pt

The pull-back $\rho_2^{-1}(\Theta_{\mathrm{null}})$ splits into two components $\mathfrak{D}_{\Gamma}$ and $\mathfrak{D}_{\overline{\Delta}}$ corresponding to the decomposition $T(C,\eta)=\Gamma_{\eta}\cup \overline{\Delta}$ given by Theorem \ref{propthetanull2limit}, for a general element $[C, \eta]\in \Theta_{\mathrm{null}}$. Note that $\mathfrak{D}_{\Gamma}\cap \mathfrak{D}_{\overline{\Delta}}=Z$.  We denote by $\widetilde{\mathfrak{D}}_{\Gamma}$, respectively, by $\widetilde{\mathfrak{D}}_{\overline{\Delta}}$, the strict transform in $\widetilde{\mathcal{T}}$ of  $\mathfrak{D}_{\Gamma}$, respectively, $\mathfrak{D}_{\overline{\Delta}}$. We have the following relations:
\begin{equation}\label{eq:push_fordiv}
\widetilde{\rho}_*\Bigl(\bigl[\widetilde{\mathfrak{D}}_{\Gamma}\bigr]\cdot \bigl[\widetilde{\mathfrak{D}}_{\overline{\Delta}}\bigr]\Bigr)=0, \ \  \widetilde{\rho}_*\Bigl(\bigl[\cE\bigr]^2\Bigr)=-8(g-1)\bigl[\Theta_{\mathrm{null}}\bigr], \ \  \widetilde{\rho}_*\Bigl(\bigl[\widetilde{\mathfrak{D}}_{\Gamma}\bigr]^2\Bigr)=-(4g-4)\bigl[\Theta_{\mathrm{null}}].
\end{equation}
A local analysis relying on (\ref{eq:can_X}) and on (\ref{eq:push_fordiv}) shows that one has the following canonical identifications of vector bundles on $\cS_g^{\circ}$
$$\xi^*\bigl(\mathbb E^{\mathrm{pr}, -}\bigr)\cong \widetilde{\rho}_*\Bigl(\omega_{\widetilde{\rho}}\otimes f^*(\delta_g)\bigl(\widetilde{\mathfrak{D}}_{\Gamma}\bigr)\Bigr) \ \ \mbox{ and } \ \ \cB\cong \widetilde{\rho}_*\Bigl(\omega_{\widetilde{\rho}}\otimes f^*(\delta_g)\bigl(\widetilde{\mathfrak{D}}_{\Gamma}+\mathcal{E}+\widetilde{\mathfrak{D}}_{\overline{\Delta}}\bigr)\Bigr).$$

From the exact sequence
$$0\lra \omega_{\widetilde{\rho}}\otimes f^*(\delta_g)\lra \omega_{\widetilde{\rho}}\otimes f^*(\delta_g)\bigl(\widetilde{\mathfrak{D}}_{\overline{\Delta}}+\mathcal{E}\bigr)\lra \omega_{\widetilde{\rho}}\otimes f^*(\delta_g)\otimes \cO_{\widetilde{\mathfrak{D}}_{\overline{\Delta}}+\mathcal{E}}\bigl(\widetilde{\mathfrak{D}}_{\overline{\Delta}}+\mathcal{E}\bigr)\lra 0 $$
on $\widetilde{\mathcal{T}}$, we obtain, after tensoring with $\mathcal{O}_{\widetilde{\mathcal{T}}}(\widetilde{\mathfrak{D}}_{\Gamma})$ and pushing forward under $\widetilde{\rho}$, the following exact sequence
\begin{equation}\label{eq:eval}
0\lra \xi^*\bigl(\mathbb E^{\mathrm{pr}, -}\bigr)\lra \cB \stackrel{\mathrm{ev}}\lra \widetilde{\rho}_*\Bigl(\omega_{\widetilde{\rho}}\otimes f^*(\delta_g)\otimes \cO_{\widetilde{\mathfrak{D}}_{\overline{\Delta}}+
\mathcal{E}}(\widetilde{\mathfrak{D}}_{\Gamma}+\mathcal{E}+\widetilde{\mathfrak{D}}_{\overline{\Delta}})\Bigr).
\end{equation}
The image of the morphism $\mathrm{ev}$ is then a vector bundle of rank $3g-3$ supported on the divisor $\Theta_{\mathrm{null}}$, corresponding fiberwise to the image of the map $H^0\bigl(C, \omega_C^{\otimes 2}\bigr)\rightarrow \bigoplus_{j=1}^{4g-4} \mathbb C_{\tilde{x}_j}$ described in (\ref{eq:can_T}). Hence, from (\ref{eq:eval}) we conclude  that

\begin{equation}\label{eq:ChernB1}
c_1\bigl(\xi^*(\mathbb E^{\mathrm{pr}, -})\bigr)=c_1(\cB)-(3g-3)[\Theta_{\mathrm{null}}].
\end{equation}
It remains to determine the class $c_1(\cB)$, which is a relatively straightforward task along the lines of Proposition \ref{prop:inszeg}.

We tensor the exact sequence (\ref{eq:ex_seq2}) on $C^{(2)}$ by $\cL_{\eta^{\otimes 3}}(\delta)$, take cohomology and then use the identifications (\ref{eq:cohsympr}), in order to obtain the following long exact sequence:
$$
0\longrightarrow \mbox{Sym}^2 H^0(\omega_C)\lra \mbox{Sym}^2 H^0\bigl(\cL_{\eta^{\otimes 3}}(\delta)\bigr) \lra H^0\bigl(\omega_{T(C,\eta)}\otimes \delta\bigr) \lra H^1(\omega_C)\otimes H^0(\omega_C)\lra 0.
$$
One also has the exact sequence, obtained by tensoring by $\cL_{\eta^{\otimes 3}}(\delta)$ and taking cohomology in the long exact sequence
$0\rightarrow \cO_{C^{(2)}}(-2\delta) \rightarrow \cO_{C^{(2)}} \rightarrow \cO_{\overline{\Delta}} \rightarrow 0$:
$$0\longrightarrow H^0(\cL_{\eta^{\otimes 3}}(-\delta))\lra H^0(\cL_{\eta^{\otimes 3}}(\delta))\lra H^0(\omega_C^{\otimes 2})\lra H^1(\cL_{\eta^{\otimes 3}}(-\delta))\lra \cdots.$$
Observe that via the identifications (\ref{eq:cohsympr}), one has $H^1(C^{(2)}, \cL_{\eta^{\otimes 3}}(-\delta))=0$. Recalling the notation (\ref{eq:univ_spin3}) for the line bundle $\cL_{\eta_g}$ on the universal symmetric product $\mathcal{C}_g^{(2)}$, we introduce the following vector bundle on $\cS_g^{\circ}$
\begin{equation}\label{eq:F}
\mathbb E_{3|1}:=(\rho_2)_*\bigl(\cL_{\eta_g}^{\otimes 3}(\delta_g)\bigr), \ \mbox{ with fibres } \mathbb E_{3|1\ \bigl|[C, \eta]}\cong H^0\bigl(C^{(2)}, \cL_{\eta^{\otimes 3}}\otimes \delta\bigr) \mbox{ over } [C,\eta]\in \cS_g^{\circ}.
\end{equation}
The above sequences being natural, they induce the respective exact sequences of vector bundles on $\cS_g^{\circ}$
$$0\lra \mbox{Sym}^2 \mathbb E\lra \mathbb E_{3|1}\lra \cB\lra \mathbb E\lra 0,  \ \ $$
\begin{equation}\label{eq:obstruction}
0\lra \bigwedge^2 \mathbb E_{3}\lra \mathbb E_{3|1}\lra \rho_*(\omega_{\rho}^{\otimes 2})\lra 0.
\end{equation}
Using Mumford's formula \cite[Theorem 5.10]{M1}  $c_1\bigl(\rho_*(\omega_{\rho}^{\otimes 2})\bigr)=13\lambda\in \mbox{Pic}(\cS_g^{\circ})$, we compute

$$
c_1(\cB)=c_1\bigl(\mathbb E_{3|1}\bigr)-c_1\bigl(\mbox{Sym}^2 \mathbb E\bigr)+\lambda=13\lambda+c_1\bigl(\bigwedge^2 \mathbb E_{3})+\lambda-c_1\bigl(\mbox{Sym}^2 \mathbb E\bigr)=\frac{20g-7}{2}\lambda.$$

We can now conclude, using (\ref{eq:ChernB1}) and (\ref{eq:antiinv}),
\begin{align*}
c_1\bigl((\mathfrak{Sc}^{\circ})^*(\lambda')\bigr)=c_1\bigl(\xi^*(\mathbb E^{\mathrm{pr},+})\bigr)+c_1\bigl(\xi^*(\mathbb E^{\mathrm{pr},-})\bigr)=\frac{20g-7}{2}\lambda+c_1(\cB)-(3g-3)[\Theta_{\mathrm{null}}]\\
=\frac{20g-7}{2}\lambda+\frac{20g-7}{2}\lambda-\frac{3(g-1)}{4}\lambda=\frac{77g-25}{4}\lambda.
\end{align*}

\end{proof}

\begin{remark}\label{rmk:non_proj}
As explained in \cite[Proposition 3.1]{DW}, the vector bundle $\mathbb E_{3|1}$ described by (\ref{eq:F}) is the \emph{obstruction bundle} determining whether the moduli space $\mathfrak{M}_g$ of supersymmetric curves is split, that is, there exists a projection $\mathfrak{M}_g\rightarrow \cS_g^+$. It is the non-triviality of the extension class
$$\bigl[\mathbb E_{3|1}\bigr]\in \mbox{Ext}^1\Bigl(\rho_*(\omega_{\rho}^{\otimes 2}\bigr), \bigwedge^2 \mathbb E_3\Bigr)$$
induced by the exact sequence (\ref{eq:obstruction}) that is responsible for the moduli space $\mathfrak{M}_g$ being non-split for $g\geq 5$.
\end{remark}

\section{The Szeg\H{o}-Hodge class on the boundary of $\ss_g^+$}

In order to determine the boundary coefficients of the Szeg\H{o}-Hodge class $\lambda_{\mathrm{SzH}}$  in (\ref{eq:szegohodge}), we  calculate the intersection of $\lambda_{\mathrm{SzH}}$ with  standard test curves lying in the boundary of $\ss_g^+$.

\subsection{Test curves in $\ss_g^+$.}

For each  $2\leq i\leq g-1$, let us fix general curves $[C]\in \cM_{i}$ and $[D, q]\in
\cM_{g-i, 1}$ and consider the following test
curves $F_i\subseteq A_i$ and $G_i\subseteq B_i$ inside $\ss_g^+$. We fix even (respectively, odd)
theta-characteristics $\eta_C^+\in \mbox{Pic}^{i-1}(C)$ and
$\eta_D^+\in \mbox{Pic}^{g-i-1}(D)$ (respectively, $\eta_C^-\in
\mathrm{Pic}^{i-1}(C)$ and $\eta_D^-\in \mathrm{Pic}^{g-i-1}(D)$).

\vskip 4pt

We define the family
$F_i$ (respectively $G_i$) as consisting of the following stable spin curves
\[
F_i:=\Bigl\{t:=[C\cup_{p=q} D,\ \ \eta_C^+, \ \eta_D^+]\in \ss_g^{+}: p\in
C\Bigr\}
\]
respectively
\[
G_i:=\Bigl\{t:=[C\cup_{p=q} D,\  \eta_C^-, \ \eta_D^-]\in \ss_g^{+}: p\in C\Bigr\}.
\]
Then we have the following formulas, see also \cite{F2}:
\begin{equation}\label{eq:intFGi}
F_i \cdot \lambda = F_i\cdot \beta_i=0, \ \ F_i\cdot \alpha_i=2-2i, \ \mbox{ and } \  G_i\cdot \lambda =G_i\cdot \alpha_i=0,  \ G_i\cdot \beta_i=2-2i.
\end{equation}
The intersection numbers of $F_i$ and $G_i$ with the remaining generators of $\mbox{Pic}(\ss_g^+)$ equal zero.

\vskip 4pt

The next one-parameter family of spin curves we construct provides a covering curve for the boundary divisor $B_0\subseteq \ss_g^+$. Let us fix a general point $[C, p, \eta_C]\in \cS_{g-1, 1}^+$. Setting $E\cong \PP^1$
for an exceptional component, we define
$$H_0:=\Bigl\{[C\cup_{\{p, q\}} E, \ \eta_C, \ \eta_E=\mathcal{O}_E(1)]:
q\in C\Bigr\}\subseteq \ss_g^+.$$ The fibre of $H_0$ over the point $q=p\in
C$ is the even spin curve $$\bigl[C\cup_q E'\cup _{p'}
E''\cup_{\{p'', q''\}} E, \ \eta_C,\
\eta_{E'}=\cO_{E'}(1), \ \eta_E=\cO_E(1),
\eta_{E''}=\cO_{E''}(-1)\bigr],$$ having as stable model
$[C\cup _p E_{\infty}]$, where $E_{\infty}$ is the rational nodal curve corresponding to $j=\infty$.
Here $E', E''$ are rational curves, $E'\cap E''=\{p'\}$,
 $E\cap E''=\{p'', q''\}$ and the stabilization map for $C\cup E\cup E'\cup E''$ contracts the components $E'$ and $E$.

\vskip 3pt

We find

\begin{equation}\label{eq:H0}
H_0 \cdot \alpha_1 = 1, \ H_0\cdot \lambda= 0, \ H_0 \cdot \alpha_0 = H_0 \cdot \beta_1 = H_0\cdot \alpha_i=H_0\cdot
\beta_i=0,
\end{equation}
for $2\leq i\leq \bigl\lfloor \frac{g}{2}\bigr\rfloor$. Hence $H_0\cdot \beta_0=\frac{1}{2} \pi_*(H_0)\cdot \delta_0=1-g$.

\vskip 4pt

The last  test curve we introduce covers the divisor $A_1$. We fix a general pointed curve $[C,p]\in \cM_{g-1,1}$ and an
even theta characteristic $\eta_C=\eta_C^{+}$ on $C$. Let $\bigl\{[J_t, p_t]\bigr\}_{t\in \PP^1}\rightarrow \mm_{1,1}$ be a general pencil of plane cubics and we consider the family of spin curves

$$G_0:=\Bigl\{[C\cup_{p\sim p_t} J_t, \ \eta_C, \ \eta_t]: \eta_t\in \mathrm{Pic}^0(J_t)[2]\setminus \{\cO_{J_t}\}, \ t\in \PP^1\Bigr\}\subseteq \ss_g^+.$$

The intersection numbers of $G_0$ with the generators of $\mbox{Pic}(\ss_g^+)$ have been computed \cite{F2}:

\begin{equation}\label{eq:intpencilA1}
G_0\cdot \lambda=3, \  \ G_0\cdot \beta_0=G_0\cdot \beta_1=0, \ \ G_0\cdot \alpha_1=-3, \ \ G_0\cdot \alpha_0=12, \ \ G_0\cdot \beta_0=12.
\end{equation}
The intersection numbers of $G_0$ with the remaining boundary divisors $\alpha_j$ and $\beta_j$, where $2\leq j\leq \bigl\lfloor \frac{g}{2}\rfloor$, are manifestly equal to zero.

\subsection{The Szeg\H{o}-Hodge class along test curves} Using the precise description of the Scorza curve along the boundary divisors of $\ss_g^+$, we determine the intersection numbers of the test curves introduced above with the Szeg\H{o}-Hodge class.

\begin{proposition}\label{prop:inttest}
One has $G_0\cdot \lambda_{\mathrm{SzH}}=6g-3$, \ $H_0\cdot \lambda_{\mathrm{SzH}}=0$, as well as $F_i\cdot \lambda_{\mathrm{SzH}}=0$, for $i=2, \ldots, g-1$.
\end{proposition}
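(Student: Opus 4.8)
The plan is to compute each number directly as the degree of the Hodge class $\lambda'$ on the one-parameter family of stable Scorza curves obtained by restricting $\mathfrak{Sc}$ to the test curve. The organizing principle I would use is the additivity of $\lambda$ over the irreducible components of a family of nodal curves: if $\pi\colon \mathcal{X}\to B$ is such a family whose components are families $\pi_j\colon \mathcal{X}_j\to B$ of smooth curves glued along sections, then the clutching description of the boundary together with the residue sequence for the relative dualizing sheaf gives $c_1(\pi_*\omega_\pi)=\sum_j c_1(\pi_{j*}\omega_{\pi_j})$, with each summand pulled back from $\mm_{g_j}$ (i.e.\ independent of the gluing points). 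Consequently only those components of the Scorza curve that genuinely vary in moduli along the test curve will contribute to $\lambda_{\mathrm{SzH}}$: constant components, and the motion of the attaching nodes, contribute nothing.

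Granting this principle, the two vanishing statements follow at once from the boundary descriptions already established. Along $F_i$ with $2\le i\le g-1$, Proposition \ref{prop:Ai} together with Remark \ref{rem:Ai} shows that every component of the limit Scorza curve --- namely $S(C,\eta_C)$, $S(D,\eta_D)$ and the rulings $\{x_\ell\}\times D$, $C\times\{y_k\}$, $D\times\{x_\ell\}$, $\{y_k\}\times C$ --- is an abstract curve that does not depend on the moving attachment point $p$; only the gluing data move. Hence $F_i\cdot\lambda_{\mathrm{SzH}}=0$. Similarly, along $H_0$ Proposition \ref{prop:B0} presents the Scorza curve as $S(C,\eta_C)\cup C_1\cup C_2$ with $C_1\cong C_2\cong C$, all three components being constant as $q$ varies (only the map $f$ and the attachment points change), so $H_0\cdot\lambda_{\mathrm{SzH}}=0$.

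The substance is in $G_0$, which covers $A_1$ with the genus-one piece the varying elliptic curve $J_t$ carrying the even (nonzero two-torsion) theta characteristic $\eta_t$. Here $S(J_t,\eta_t)=\{(x,y): \cO_{J_t}(x-y)\cong\eta_t\}$ is a translate of the diagonal, hence isomorphic to $J_t$; applying Proposition \ref{prop:Ai} with $i=1$ (so that $S(J_t,\eta_t)_{p_t}$ is a single point while $S(C,\eta_C)_p=\{y_1,\dots,y_{g-1}\}$), the components isomorphic to $J_t$ are exactly $S(J_t,\eta_t)$, the $g-1$ rulings $J_t\times\{y_k\}$, and the $g-1$ rulings $\{y_k\}\times J_t$, for a total of $2g-1$ copies, while $S(C,\eta_C)$, $\{x_1\}\times C$ and $C\times\{x_1\}$ are constant. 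Each of the $2g-1$ elliptic components forms, over $G_0$, a family isomorphic to the elliptic pencil underlying $G_0$, so additivity shows each contributes $\deg\lambda=G_0\cdot\lambda=3$, using (\ref{eq:intpencilA1}). Therefore I expect $G_0\cdot\lambda_{\mathrm{SzH}}=(2g-1)\cdot 3=6g-3$.

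The main obstacle will be making the additivity principle precise and verifying that the special fibers produce no correction. I would justify additivity by pulling $\lambda'$ back through the clutching morphism $\prod_j \mm_{g_j,n_j}\to \mm_{1+3g(g-1)}$ attached to the dual graph of the generic Scorza curve, on which $\lambda'$ restricts to $\sum_j\lambda_j$ with each $\lambda_j$ independent of the marked points; the residue sequence $0\to\omega_\pi\to\bigoplus_j(\nu_j)_*\omega_{\pi_j}(\mathrm{nodes})\to(\mathrm{skyscraper})\to 0$ then yields the claimed identity on first Chern classes, the skyscraper term being a trivial bundle that does not affect $c_1$. The remaining care is that along $G_0$ the fibers $J_t$ degenerate for finitely many $t$, where the various $J_t$-components could in principle meet differently; but since each such component is literally isomorphic, as a family over $G_0$, to the pencil computing $G_0\cdot\lambda$, its $\lambda$-degree is unchanged and those finitely many singular fibers are already incorporated into that degree.
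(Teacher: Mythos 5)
Your proposal is correct and follows essentially the same route as the paper: reduce each intersection number to the $\lambda'$-degree of the induced family of stable Scorza curves, observe via Propositions \ref{prop:B0} and \ref{prop:Ai} (and Remark \ref{rem:Ai}) that only components varying in moduli contribute, and for $G_0$ count the $2g-1$ elliptic components each contributing $G_0\cdot\lambda=3$. The extra justification you give for the additivity of $\lambda$ via the clutching morphisms is a reasonable way to make explicit what the paper leaves implicit, but the argument is the same.
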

\begin{proof} We write $H_0\cdot \lambda_{\mathrm{SzH}}=H_0\cdot \mathfrak{Sc}^*(\lambda')=\bigl(\mathfrak{Sc}\bigr)_*(H_0)\cdot \lambda'$, where we have used that the rational Scorza map $\mathfrak{Sc}\colon \ss_g^+\dashrightarrow \mm_{1+3g(g-1)}$ is, as  explained in Proposition \ref{prop:B0}, regular along the image of the test curve $H_0$. To conclude that $(\mathfrak{Sc})_*(H_0)\cdot \lambda'=0$, we observe that as $t\in H_0$ varies, applying Proposition \ref{prop:B0}, the moduli of the components of the (stable) Scorza curve $\mathfrak{Sc}(t)$ \emph{do not} vary, only the points where these components are attached to each other. Similar consideration apply for the intersection number $F_i\cdot \lambda_{\mathrm{SzH}}=\bigl(\mathfrak{Sc}\bigr)_*(F_i)\cdot \lambda'$, where we can use Remark \ref{rem:Ai}, building directly on Proposition \ref{prop:Ai}.

\vskip 3pt

We are left with computing the intersection number $G_0\cdot \lambda_{\mathrm{SzH}}=\bigl(\mathfrak{Sc}\bigr)_*(G_0)\cdot \lambda'$. For each $[t, \eta_t]$, where $t\in \PP^1$, the curve $\mathfrak{Sc}(t)\in \mm_{1+3g(g-1)}$ consists of a copy of $C$, a copy of the genus one Scorza curve $\mathfrak{Sc}\bigl([J_t, \eta_t]\bigr)$, which is a translate by $\eta_t$ of the elliptic curve $J_t$, two copies of the fixed curve $C$ and, finally, $(2g-2)$ copies of $J_t$. The way these components meet each other is described in Remark \ref{rem:Ai}. In particular, as $[t, \eta_t]$ varies, $\mathfrak{Sc}(t)$ contains $2g-1$ copies of $J_t$ and these are the only components varying in moduli. The degree of the map $\ss_{1,1}^+\rightarrow \mm_{1,1}$  being equal to $3$, since the $\lambda$-degree of a pencil of plane cubics in $\mm_{1,1}$ equals $1$, we conclude that $\bigl(\mathfrak{Sc}\bigr)_*(G_0)\cdot \lambda'=3(2g-1)$, as claimed.
\end{proof}

\begin{proposition}\label{prop:coefficients}
In the notation of (\ref{eq:szegohodge}), we have:
$$c_{\alpha_0}=\frac{69g-21}{16}, \ \  c_{\beta_0}=0 \ \mbox{ and } c_{\alpha_i}=0, \mbox{ for } \ i=1, \ldots, \left\lfloor \frac{g}{2}\right\rfloor.$$
\end{proposition}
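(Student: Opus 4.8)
The plan is to determine the three remaining boundary coefficients $c_{\alpha_0},c_{\beta_0},c_{\alpha_1}$ by intersecting $\lambda_{\mathrm{SzH}}$ with the test curves $F_i,H_0,G_0$ and then supplying one extra geometric input, since $c_\lambda=\frac{77g-25}{4}$ is already fixed by Theorem \ref{eq:lambdaszego}. I would begin with the cheapest cases. For $2\leq i\leq g-1$ the curve $F_i$ meets only $\alpha_i$ among all generators, with $F_i\cdot\alpha_i=2-2i\neq 0$; since $F_i\cdot\lambda_{\mathrm{SzH}}=0$ by Proposition \ref{prop:inttest}, this forces $c_{\alpha_i}=0$, and by the identification $\alpha_i=\alpha_{g-i}$ all $c_{\alpha_i}$ with $2\leq i\leq\lfloor g/2\rfloor$ vanish.

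Intersecting with the two remaining test curves yields only \emph{two} relations among the three unknowns $c_{\alpha_0},c_{\beta_0},c_{\alpha_1}$. From $H_0\cdot\lambda_{\mathrm{SzH}}=0$, using $H_0\cdot\lambda=0$, $H_0\cdot\alpha_1=1$ and $H_0\cdot\beta_0=1-g$, one obtains $c_{\alpha_1}=(g-1)c_{\beta_0}$. From $G_0\cdot\lambda_{\mathrm{SzH}}=6g-3$, using $G_0\cdot\lambda=3$, $G_0\cdot\alpha_0=12$, $G_0\cdot\alpha_1=-3$ and $G_0\cdot\beta_0=0$, one gets $3c_\lambda-12c_{\alpha_0}+3c_{\alpha_1}=6g-3$. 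This is a genuine underdetermination: the listed test curves cannot separate $c_{\beta_0}$ from $c_{\alpha_1}$. The crux of the proof, and its main obstacle, is therefore to produce a third independent equation, which I would obtain by proving directly that $c_{\beta_0}=0$.

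For this I would restrict the entire class (\ref{eq:szegohodge}) to the boundary divisor $B_0\cong\cS_{g-1,2}^+$ and compare $\psi$-classes. By Proposition \ref{prop:B0} the limiting Scorza curve over a general point of $B_0$ is $S(C,\eta_C)\cup C_1\cup C_2$ with $C_1,C_2\cong C$, so every component has moduli governed solely by $[C,\eta_C]\in\cS_{g-1}^+$; as $\lambda'$ is additive over the normalization and insensitive to the location of the nodes, the restriction $\lambda_{\mathrm{SzH}}|_{B_0}=(\mathfrak{Sc}|_{B_0})^*\lambda'$ is pulled back from $\cS_{g-1}^+$ along the map forgetting the two marked points, and hence carries no component along the cotangent classes $\psi_p,\psi_q$. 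On the other hand, in $\mathrm{Pic}(\cS_{g-1,2}^+)$ the classes $\lambda,\alpha_0,\alpha_1$ and all $\alpha_i,\beta_i$ with $i\geq 1$ restrict to combinations of the Hodge class and boundary classes, none of which involve $\psi_p$ or $\psi_q$, whereas the self-intersection (normal-bundle) formula gives $\beta_0|_{B_0}$ a nonzero component along $\psi_p+\psi_q$, coming from the smoothing of the two nodes at which $E$ is attached. Matching the $\psi_p+\psi_q$-coefficient in the restricted expression then forces $c_{\beta_0}=0$. I expect the precise computation of the normal bundle of $B_0$, together with the verification that every other generator restricts without a $\psi$-contribution, to be the main technical difficulty.

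With $c_{\beta_0}=0$ in hand, the first relation gives $c_{\alpha_1}=(g-1)c_{\beta_0}=0$, and substituting $c_{\alpha_1}=0$ and $c_\lambda=\frac{77g-25}{4}$ into the second relation yields $12c_{\alpha_0}=3c_\lambda-(6g-3)=\frac{231g-75}{4}-6g+3=\frac{207g-63}{4}$, that is $c_{\alpha_0}=\frac{69g-21}{16}$, as claimed. As a consistency check, this is compatible with the factorization $\lambda_{\mathrm{SzH}}=\xi^*\bigl(2\lambda^{\mathrm{pr}}-\frac14\delta_0^{\mathrm{ram}}\bigr)$ and the identity $\xi^*(\delta_0^{\mathrm{ram}})=12(g-1)[\thet]$, since the boundary expansion of $[\thet]$ from \cite{F2,FV1} has vanishing $\beta_0$- and $\alpha_1$-coefficients; pushing this codimension-one comparison across the boundary is an alternative route to $c_{\beta_0}=c_{\alpha_1}=0$ should the normal-bundle bookkeeping on $B_0$ prove too cumbersome.
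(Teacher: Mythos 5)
Your computation reaches the correct values and uses the same three test curves as the paper, but you have misread the logical structure of the linear algebra: the system is \emph{not} underdetermined, and the entire third paragraph of your proposal is an unnecessary (and uncompleted) detour. The test curves $F_i$ are defined for all $2\leq i\leq g-1$, not just up to $\lfloor g/2\rfloor$, and under the standard identification $A_i=A_{g-i}$ (equivalently the convention $c_{\alpha_i}:=c_{\alpha_{g-i}}$) the curve $F_{g-1}\subseteq A_{g-1}=A_1$ satisfies $F_{g-1}\cdot\alpha_{g-1}=2-2(g-1)\neq 0$ and meets no other generator, so $0=F_{g-1}\cdot\lambda_{\mathrm{SzH}}$ already forces $c_{\alpha_1}=0$. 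Feeding this into your relation $c_{\alpha_1}=(g-1)c_{\beta_0}$ from $H_0$ gives $c_{\beta_0}=0$ at once, and the $G_0$ relation then yields $c_{\alpha_0}=\frac{69g-21}{16}$ exactly as you compute. This is precisely the paper's argument.

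Regarding the substitute argument you propose for $c_{\beta_0}=0$: restricting $\lambda_{\mathrm{SzH}}$ to $B_0$ and isolating the $\psi_p+\psi_q$ component is plausible in outline (and Proposition \ref{prop:B0} does show the Scorza curve over $B_0$ does not vary in moduli, which is the input the paper itself uses for $H_0\cdot\lambda_{\mathrm{SzH}}=0$), but as written it is a program rather than a proof. You defer exactly the two points that carry all the content: the computation of the normal bundle of $B_0$ in $\ss_g^+$ (which is delicate here because $B_0$ parametrizes quasi-stable curves with an exceptional component attached at two nodes, so the usual $-\psi_p-\psi_q$ formula must be adapted to the spin/exceptional-component setting), and the verification that $\alpha_0|_{B_0}$ and the other generators contribute no $\psi$-terms. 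Since the paper closes the system with one line of bookkeeping on the index convention, you should replace the $B_0$-restriction argument by the observation about $F_{g-1}$; if you do want to keep it as an independent check, the normal-bundle computation must actually be carried out.
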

\begin{proof} This is a direct consequence of Theorems \ref{eq:lambdaszego} and Proposition \ref{prop:inttest}. Indeed, via (\ref{eq:intFGi}), for each $2\leq i\leq g-1$ we have that $0=F_i\cdot \lambda_{\mathrm{SzH}}=(2i-2)c_{\alpha_i}$, therefore $c_{\alpha_i}=0$, where we recall the convention $c_{\alpha_i}:=c_{\alpha_{g-i}}$ for $i > \left\lfloor \frac{g}{2}\right\rfloor$. Similarly, via (\ref{eq:H0}), we write
$0=H_0\cdot \lambda_{\mathrm{SzH}}=(g-1)c_{\beta_0}-c_{\alpha_1}$,
from which we conclude that $c_{\beta_0}=0$. Finally, using (\ref{eq:intpencilA1}) we write that
$$6g-3=G_0\cdot \lambda_{\mathrm{SzH}}=3c_{\lambda}-12c_{\alpha_0}-12c_{\beta_0},$$
and since $c_{\lambda}=\frac{77g-25}{4}$ (via Theorem \ref{eq:lambdaszego}), we obtain the claimed formula for $c_{\alpha_0}$. Finally, in order to conclude that the coefficient $c_{\beta_i}$ in (\ref{eq:szegohodge}) is non-negative for $i\geq 1$, it suffices to observe that
$(2i-2)c_{\beta_i}=G_i\cdot \mathfrak{Sc}^*(\lambda')=(\mathfrak{Sc})_*(G_i)\cdot \lambda'\geq 0$.
\end{proof}

\vskip 3pt

Recall that $\lambda', \delta_i'$ denote the standard generators of $\mbox{Pic}(\mm_{1+3g(g-1)})$. Before we describe the pull-back of the Scorza map $\mathfrak{Sc}$, we point out that there exists an effective divisor $\mathfrak{D}_{\mathrm{sg}}$ on $\cS_g^+$ consisting of ineffective spin curves $[C, \eta]$ such that the Scorza curve $S(C, \eta)$ is singular. Using \cite[Lemma 7.1.3]{DK}, this divisor has the following description
\begin{align*}
\mathfrak{D}_{\mathrm{sg}}:=\Bigl\{[C, \eta]\in \cS_g^+: H^0(C, \eta)=0, \ \exists (x,y)\in S(C,\eta) \mbox{ with } \ \\ H^0\bigl(C, \eta(x-2y)\bigr)\neq 0, \ \mbox{ and } \ H^0\bigl(C, \eta(y-2x)\bigr)\neq 0\Bigr\}^{-},
\end{align*}
where the closure is taken inside $\cS_g^+$. The task of computing the class $[\overline{\mathfrak{D}}_{\mathrm{sg}}]$ will be left for future work.

\begin{proposition}\label{prop:pull_back}
One has that $\mathfrak{Sc}^*(\delta_i)=0$, for $i=1, \ldots, \bigl \lfloor \frac{1+3g(g-1)}{2}\bigr \rfloor$. Furthermore
$$\mathfrak{Sc}^*(\delta_0')=(2g-1)\cdot \alpha_0+ (4g-2)\cdot \beta_0+\sum_{i=1}^{\lfloor \frac{g}{2}\rfloor} \Bigl(2(ig-i^2+g)\cdot \alpha_i+2i(g-i)\cdot \beta_i\Bigr) + \bigl[\overline{\mathfrak{D}}_{\mathrm{sg}}\bigr]+12(g-1)\bigl[\overline{\Theta}_{\mathrm{null}}\bigr].$$
\end{proposition}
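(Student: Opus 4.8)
The plan is to read off each coefficient of $\mathfrak{Sc}^*(\delta_i')$ from the number and type of nodes that the source Scorza curve $S(C,\eta)$ acquires as $[C,\eta]$ crosses each divisor of $\ss_g^+$, weighted by the order of vanishing of the local smoothing parameter. The governing principle is that, for a one-parameter family $\phi\colon \mathcal B\to \ss_g^+$ meeting a divisor $\mathfrak D$ transversally at a general point, the multiplicity of $\mathfrak D$ in $\mathfrak{Sc}^*(\delta_0')$ (respectively in $\mathfrak{Sc}^*(\delta_i')$) equals the number of non-separating (respectively separating, of the appropriate genus splitting) nodes of the central fibre of the induced family of stable Scorza curves, each counted with the thickness of the total space at that node. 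Since $\mathfrak{Sc}$ is already extended modularly along every divisor, this determines the pull-back in codimension one.

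First I would dispose of the separating classes $\delta_i'$ with $i\geq 1$. Inspecting the explicit limit curves — Proposition \ref{prop:limitA0} over $A_0$, Proposition \ref{prop:B0} over $B_0$, Proposition \ref{prop:Ai} over $A_i$, Proposition \ref{prop:limitBi} over $B_i$, and Theorem \ref{propthetanull2limit} over $\Theta_{\mathrm{null}}$ — one checks that in every case the dual graph is bridgeless: adjacent components are always joined by at least two nodes, while the irreducible degenerations over $A_0$ and over $\mathfrak{D}_{\mathrm{sg}}$ carry only non-separating nodes by irreducibility. Hence no separating node occurs in codimension one, so $\mathfrak{Sc}^*(\delta_i')=0$ for all $i=1,\dots,\lfloor (1+3g(g-1))/2\rfloor$.

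Next I would compute the boundary contributions to $\delta_0'$ as node counts. By Proposition \ref{prop:limitA0} the curve $S(X,\eta)$ over a general point of $A_0$ is irreducible with exactly $2g-1$ nodes; by Proposition \ref{prop:B0} the curve over $B_0$ has $(2g-2)+(2g-2)+2=4g-2$ nodes; and Remarks \ref{rem:Ai} and \ref{rem:Bi} record $\delta=2(ig-i^2+g)$ nodes over $A_i$ and $\delta=2i(g-i)$ nodes over $B_i$. In each case one confirms, using the explicit Mayer--Vietoris smoothing families constructed in the proofs of these statements, that the total space of the induced family of Scorza curves is smooth at each node, so that every node smooths transversally and contributes with multiplicity one (in particular there is no spurious factor of $2$ coming from the exceptional components along the $\beta$-type divisors). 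This produces the coefficients $2g-1$, $4g-2$, $2(ig-i^2+g)$ and $2i(g-i)$ of $\alpha_0,\beta_0,\alpha_i,\beta_i$. The test curves of Section 5, via the intersection numbers (\ref{eq:intFGi}), (\ref{eq:H0}) and (\ref{eq:intpencilA1}), furnish an independent numerical cross-check.

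Finally I would treat the two interior divisors, which carry the main difficulties. Along $\overline{\mathfrak{D}}_{\mathrm{sg}}$ the underlying spin curve is smooth with $h^0(C,\eta)=0$, and by \cite[Lemma 7.1.3]{DK} the generic $S(C,\eta)$ becomes irreducible with a node; a careful local analysis, keeping track of the factor-swapping involution and of the double condition $H^0(\eta(x-2y))\neq 0$, $H^0(\eta(y-2x))\neq 0$ in the definition of $\mathfrak{D}_{\mathrm{sg}}$, yields the coefficient $1$. For $\overline{\Theta}_{\mathrm{null}}$ I would use that $\mathfrak{Sc}=\chi\circ\xi$ and that over a general point of $\Theta_{\mathrm{null}}$ the Prym curve $\xi([C,\eta])$ lies in $\Delta_0^{\mathrm{ram}}$ (the exceptional curves $E_j$ are inserted at the $4g-4$ ramification points, as in the proofs of Theorems \ref{propthetanull2limit} and \ref{eq:lambdaszego}), so that $\xi(\Theta_{\mathrm{null}})$ avoids $\delta_0'$ and $\delta_0''$; since the source node of an admissible cover smooths with multiplicity one along $\Delta_0^{\mathrm{ram}}$, the coefficient of $\delta_0^{\mathrm{ram}}$ in $\chi^*(\delta_0')$ equals $1$, and combining with the already-established identity $\xi^*(\delta_0^{\mathrm{ram}})=12(g-1)[\Theta_{\mathrm{null}}]$ gives the coefficient $12(g-1)$. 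The hardest step is precisely this multiplicity: the factor $3=12(g-1)/(4g-4)$ beyond the naive count of $4g-4$ nodes reflects the fact that the limit in $C\times C$ is the non-reduced curve $2\Delta+\widetilde{\Gamma}_\eta$, so that stable reduction requires an order-two base change and blow-ups, and it is exactly this data that the self-intersection relations (\ref{eq:push_fordiv}) encode. Assembling all contributions then yields the stated formula for $\mathfrak{Sc}^*(\delta_0')$.
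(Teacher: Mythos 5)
Your proposal follows essentially the same route as the paper: the vanishing of $\mathfrak{Sc}^*(\delta_i')$ for $i\geq 1$ from the absence of disconnecting nodes in the explicit limit Scorza curves, the boundary coefficients of $\delta_0'$ read off as the node counts recorded in Propositions \ref{prop:limitA0}, \ref{prop:B0} and Remarks \ref{rem:Ai}, \ref{rem:Bi}, and the coefficient $12(g-1)$ of $[\overline{\Theta}_{\mathrm{null}}]$ obtained from the identity $\xi^*(\delta_0^{\mathrm{ram}})=12(g-1)[\Theta_{\mathrm{null}}]$ of (\ref{eq:pull_back_thet}) combined with the factorization $\mathfrak{Sc}=\chi\circ\xi$. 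Your added discussion of transversality of the smoothing families and of the origin of the factor $3$ over $\Theta_{\mathrm{null}}$ elaborates on, but does not depart from, the paper's argument.
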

\begin{proof}
The fact that $\mathfrak{Sc}^*(\delta'_i)=0$ for $i\geq 1$ follows from Theorem \ref{propthetanull2limit} and Propositions \ref{prop:limitA0}, \ref{prop:B0}, \ref{prop:Ai} and \ref{prop:limitBi}. Precisely, we use that the limit Scorza curve of the general point of each of the divisors $\Theta_{\mathrm{null}}$, or the boundary divisors $A_i, B_i$ has no disconnecting node, therefore it does not belong to the boundary divisor $\Delta_i'$ of $\mm_{1+3g(g-1)}$. The statement concerning $\mathfrak{Sc}^*(\delta_0')$ is clear set-theoretically. The multiplicities in front of each divisor correspond to the number of nodes of the limiting Scorza curve in question. This is contained in (\ref{eq:pull_back_thet}), concerning the multiplicity of $\bigl[\overline{\Theta}_{\mathrm{null}}\bigr]$, and in Remarks \ref{rem:Ai} and \ref{rem:Bi}, concerning the multiplicities of $\alpha_i$, $\beta_i$ respectively.
\end{proof}

\section{Scorza quartics via Wirtinger duality}

In this section we determine the connection between the Scorza quartic associated to an ineffective spin curve and theta functions on the Jacobian $JC$. The Scorza quartic in genus $3$ was introduced by Scorza \cite{Sc1} and rediscovered in \cite{DK} as the inverse map of the rational morphism $\cM_3\dashrightarrow \cS_3^{+}$, which assigns to a plane quartic its \emph{covariant quartic}. A generalization to arbitrary genus $g$ was put forward in \cite{Sc}. We shall provide a novel unconditional treatment of the Scorza quartic without using any classical invariant theory.

\vskip 4pt

We begin by recalling \emph{Wirtinger duality} for second order theta  functions. For $g\geq 3$, we fix a spin curve $[C, \eta]\in \cS_g^+$ with $H^0(C, \eta)=0$. We recall that $\Theta:=\Theta_{\eta}$ is the symmetric theta divisor on $JC=\mbox{Pic}^0(C)$ associated to $\eta$ (see also (\ref{eq:theta_eta})) and $\theta:=\theta_{\eta}$ is the symmetric theta function on $JC$ whose zero locus is $\Theta$.
For  $a\in JC$, let $\Theta_a:=\bigl\{\xi\in JC: H^0(C, \xi+\eta-a)\neq 0\bigr\}$ be the corresponding translated theta divisor.

\vskip 3pt

Wirtinger duality \cite[p. 335]{M3} is an isomorphism of linear systems
$$\mathfrak{w} \colon |2\Theta|\stackrel{\equiv}\longrightarrow |2\Theta|^{\vee},$$
such that the following diagram commutes:
$$
\xymatrix{
JC \ar[r]^{\psi} \ar[rd]_{\varphi_{2\Theta}}   &  |2\Theta| \ar[d]_{\mathfrak{w}}&\\
  &                       |2\Theta|^{\vee}.}
$$
Here $\psi\colon J\rightarrow |2\Theta|$  sends $a \in J$ to the divisor $\Theta_{a} + \Theta_{-a}$ and $\varphi_{2\Theta}\colon J\rightarrow \PP H^0\bigl(J, \cO_J(2\Theta)\bigr)^{\vee}$ is the map induced by the linear system $|2\Theta |$. Hence, via Wirtinger Duality, the image of the origin $0=\cO_C\in J$ corresponds to the divisor
$2\Theta \in |2\Theta|$.

\vskip 3pt

Let $|2\Theta|_0 \subseteq |2\Theta|$ be the sublinear system of divisors passing through $0 \in JC$. Then the map $\mathfrak{w}$  induces an isomorphism of $|2 \Theta|_0$ with the \emph{polar} of $2\Theta$, that is, the span $\langle 2\Theta \rangle$ of the divisor $2\Theta$ in the space $|2\Theta|^{\vee}\cong \PP^{2^g-1}$. Via the surjective restriction map
\[
H^0\bigl(JC, \cO_{JC}(2\Theta)\bigr) \lra H^0(\Theta , \cO_{\Theta}(2\Theta)\bigr),
\]
the span $\langle 2 \Theta \rangle \subseteq |2\Theta|^{\vee}$ can be identified with the subspace $\bigl|2\Theta_{|\Theta}\bigr|^{\vee}=
\PP H^0\bigl(\Theta, \cO_{\Theta}(2\Theta)\bigr)^{\vee}$ of hyperplanes in $|2\Theta|$ passing through the point $2\Theta$. So Wirtinger Duality also induces an isomorphism
$$
\mathfrak{w}\colon |2\Theta|_0 \stackrel{\cong}\lra \bigl|2\Theta_{|\Theta}\bigr|^{\vee}.
$$

Since we assumed that $H^0(C, \eta)=0$, we have  $0\not\in \Theta$, therefore the divisor $2\Theta$ does not belong to $|2\Theta|_0$ and the restriction map induces the isomorphism described in the Introduction in (\ref{eq:res1})
$$
\mathrm{res}\colon H^0\bigl(JC, 2\Theta\bigr)_0 \stackrel{\cong}\lra H^0\bigl(\Theta, \cO_{\Theta}(2\Theta)\bigr).
$$
Consequently, we also have an induced isomorphism of projective spaces
\begin{equation}\label{eq:tilde_wir}
\widetilde{\mathfrak{w}}=\mathfrak{w}\circ \mathrm{res}^{-1} \colon \bigl|2\Theta_{| \Theta}\bigr| \stackrel{\cong}\lra \bigl|2\Theta _{| \Theta}\bigr|^{\vee}.
\end{equation}

\vskip 4pt

Next, we observe that if $\varphi\colon C\times C\rightarrow JC$ is the difference map $(x,y)\mapsto \cO_C(x-y)$, then
$$\varphi^*\bigl(\cO_{JC}(2\Theta)\bigr)\cong \omega_C\boxtimes \omega_C(2\Delta),$$
and the pull-back on global sections induces the map $\varphi^*\colon H^0(JC, 2\Theta)_0\twoheadrightarrow \mbox{Sym}^2 H^0(C,\omega_C)$ described by  (\ref{eq:ident3}) in the Introduction. As explained in \cite[4.5]{welters86}, the map $\varphi^*$ assigns to a second order theta function vanishing at the origin the quadratic term of its Taylor expansion around that point.

\vskip 3pt

We now construct a natural map
\[
\mathrm{Sym}^2 H^1(JC, \cO_{JC}) \lra H^0\bigl(\Theta, \cO_{\Theta } (2\Theta )\bigr)
\]
defined as follows. From the cohomology of the short exact sequence
\[
0 \lra \cO_{JC} \lra \cO_{JC}(\Theta ) \lra \cO_{\Theta }(\Theta) \lra 0,
\]
since $H^1\bigl(JC, \cO_{JC}(\Theta)\bigr)=0$, we obtain the isomorphism $$H^1(JC, \cO_{JC})\cong H^0\bigl(\Theta, \cO_{\Theta }(\Theta)\bigr).$$ To make this identification explicit, we use \cite[0.3]{BD1}. The Abel-Jacobi map $C\rightarrow JC$ induces an isomorphism $H^1(JC, \cO_{JC})\stackrel{\cong}\rightarrow H^1(C, \cO_C)\cong T_{0}(JC)$. Since $\Theta$ is the vanishing locus of the function $\theta$ on $JC$, given a vector $v\in T_0(JC)$, the restriction to $\Theta$ of the function $d\theta.v=\partial_v(\theta_{|\Theta})$ can be regarded as an element of $H^0\bigl(\Theta, \cO_{\Theta}(\Theta)\bigr)$, whose zero locus   we denote by $\partial_{v} \Theta$.
The multiplication map of sections
$
\mbox{Sym}^2 H^0\bigl(\Theta, \cO_{\Theta}(\Theta)\bigr) \lra H^0\bigl(\Theta, \cO_{\Theta }(2\Theta)\bigr)
$
induces via this identification the natural map
\begin{equation}\label{eq:map_mu}
\mu \colon \mbox{Sym}^2 H^1(JC, \cO_{JC}) \lra H^0\bigl(\Theta, \cO_{\Theta}(2\Theta )\bigr).
\end{equation}

Our first result concerning this map is the following:

\begin{lemma}\label{lemma:map_bdry3}
One has the following commutative diagram of projectivized linear maps:
\begin{equation}\label{eq:wirt4}
\xymatrix{
|2\Theta|_0 \ar[r]^{\mathfrak{w}} \ar[rd]_{\varphi^*}   &  \bigl|2\Theta_{|\Theta}\bigr|^{\vee} \ar[d]^{^t\mu}&\\
  &                       \PP \ \mathrm{Sym}^2\ H^0(C, \omega_C)}
\end{equation}
\end{lemma}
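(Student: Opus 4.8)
The plan is to show that all three arrows are projectivizations of linear maps and that the composite ${}^t\mu\circ\mathfrak{w}$ equals $\varphi^*$ up to a nonzero universal scalar, which is exactly projective commutativity. First I would unwind the linear algebra behind $\mathfrak{w}$. Wirtinger duality is the projectivization of a \emph{symmetric} linear isomorphism $W\colon H^0(JC,2\Theta)\to H^0(JC,2\Theta)^{\vee}$ characterized (see \cite[p.~335]{M3}) by $W(s_a)=c(a)\cdot \mathrm{ev}_a$, where $s_a(z):=\theta(z-a)\theta(z+a)\in H^0(JC,2\Theta)$ is the section with divisor $\Theta_a+\Theta_{-a}=\psi(a)$, where $\mathrm{ev}_a$ denotes evaluation at $a$, and where $c(a)$ is a nowhere–zero \emph{even} scalar (even because $s_{-a}=s_a$ and $\mathrm{ev}_{-a}=\mathrm{ev}_a$, as second–order theta functions are even). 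For $s\in H^0(JC,2\Theta)_0$, i.e. $s(0)=0$, the symmetry of $W$ gives $W(s)(\theta^2)=W(s_0)(s)=c(0)\,s(0)=0$, so $W(s)$ annihilates $\ker(\mathrm{res})=\bC\cdot\theta^2$ and hence descends to a functional $\widetilde{W}(s)\in H^0\bigl(\Theta,\cO_{\Theta}(2\Theta)\bigr)^{\vee}$. This descent is precisely the restricted map $\mathfrak{w}\colon|2\Theta|_0\to \bigl|2\Theta_{|\Theta}\bigr|^{\vee}$.

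The decisive step is to produce an explicit lift, along $\mathrm{res}$, of the sections $\mu(v\cdot w)$. For $v,w\in T_0(JC)\cong H^1(JC,\cO_{JC})$ I would differentiate the holomorphic family $a\mapsto s_a$ twice at $a=0$ and set
\[
h_{v,w}:=\tfrac12\,\partial_{a,v}\partial_{a,w}s_a\big|_{a=0}=\theta\cdot\partial_v\partial_w\theta-\partial_v\theta\cdot\partial_w\theta\in H^0\bigl(JC,2\Theta\bigr),
\]
where the vanishing of the first $a$–derivative uses $s_{-a}=s_a$. Restricting to $\Theta$ kills the term containing $\theta$, so $\mathrm{res}(h_{v,w})=-\partial_v\theta\,\partial_w\theta|_{\Theta}=-\mu(v\cdot w)$, where I use exactly the description of $\mu$ from (\ref{eq:map_mu}) via the identification of \cite[0.3]{BD1}.

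It then remains to evaluate ${}^t\mu(\widetilde{W}(s))$ on a product $v\cdot w$. By definition this equals $\widetilde{W}(s)(\mu(v\cdot w))=-W(s)(h_{v,w})=-W(h_{v,w})(s)$, using the symmetry of $W$. Since $W$ is a fixed linear map and $a\mapsto s_a$ is holomorphic into the fixed space $H^0(JC,2\Theta)$, one has $W(h_{v,w})=\tfrac12\,\partial_{a,v}\partial_{a,w}\bigl(c(a)\,\mathrm{ev}_a\bigr)\big|_{0}$; evaluating on $s$ and using $s(0)=0$ together with $c'(0)=0$ (evenness of $c$) makes every term carrying a derivative of $c$ drop out, leaving $-W(h_{v,w})(s)=-\tfrac12 c(0)\,\partial_v\partial_w s(0)=-c(0)\,\varphi^*(s)(v\cdot w)$, because the quadratic term of the (even) Taylor expansion of $s$ at the origin is $s_2(v,w)=\tfrac12\partial_v\partial_w s(0)$, which is exactly $\varphi^*(s)$ by \cite[4.5]{welters86}. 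Hence ${}^t\mu\circ\widetilde{W}=-c(0)\,\varphi^*$ as linear maps, and projectively ${}^t\mu\circ\mathfrak{w}=\varphi^*$, as claimed.

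The main obstacle is the second paragraph: verifying that $h_{v,w}$ really is a global section of $\cO_{JC}(2\Theta)$ (it is, being a second $a$–derivative of the family $a\mapsto s_a$ of sections of the \emph{fixed} bundle $\cO_{JC}(2\Theta)$) and that its restriction reproduces $-\mu(v\cdot w)$ with the normalization of $\mu$ inherited from \cite{BD1}. The compatibility of the Wirtinger pairing with differentiation in the parameter $a$ — in particular the fact that the undetermined scalar $c(a)$ cannot interfere, because $s$ vanishes at the origin and $c$ is even — is the point requiring the most care; it is precisely what rescues the computation and explains why only projective, rather than strictly linear, commutativity is asserted.
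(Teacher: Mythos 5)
Your proof is correct, but it takes a genuinely different route from the paper's. The paper proves commutativity by evaluating both composites on the spanning family of divisors $\psi(a)=\Theta_a+\Theta_{-a}$ with $a\in\Theta$: it identifies $\mathfrak{w}(\psi(a))$ with the hyperplane of sections vanishing at $a$, so that ${}^t\mu$ carries it to the hyperplane $\bigl\{\sum c_{ij}\partial_i\cdot\partial_j:\sum c_{ij}\partial_i\theta(a)\partial_j\theta(a)=0\bigr\}$, and then computes directly that the quadric tangent cone of $\psi(a)$ at the origin is the rank-one quadric $\bigl(\sum_i\partial_i\theta(a)\,\omega_i\bigr)^2$, which cuts out the same hyperplane. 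You instead work on the dual side: you realize $\mathfrak{w}$ as a \emph{symmetric} linear isomorphism $W$ with $W(s_a)=c(a)\,\mathrm{ev}_a$, build the explicit lift $h_{v,w}=\theta\,\partial_v\partial_w\theta-\partial_v\theta\,\partial_w\theta$ of $-\mu(v\cdot w)$ by differentiating the family $s_a$ twice at $a=0$, and use self-adjointness to convert $\langle{}^t\mu(\mathfrak{w}(s)),v\cdot w\rangle$ into $-\tfrac12 c(0)\,\partial_v\partial_w s(0)$. Your route buys two things: it avoids having to know that the $\psi(a)$ for $a\in\Theta$ span $|2\Theta|_0$ (a fact the paper uses implicitly), and your formula for $h_{v,w}$ is exactly the second derivative $\partial_{v_1}\partial_{v_2}G(u,0)$ that the paper only introduces later, in the proof of Theorem \ref{thm:thet_quartic}, so your argument effectively pre-packages part of that computation. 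The price is that the symmetry of the linear map $W$ becomes the load-bearing input and deserves a justification: it follows, for instance, from Riemann's addition formula $\theta(u+v)\theta(u-v)=\sum_{\sigma}\Theta_\sigma(u)\Theta_\sigma(v)$, which exhibits $G$ as a symmetric tensor in $H^0(JC,2\Theta)^{\otimes 2}$ whose inverse is $W$. Note also that linearity plus symmetry of $W$ already force your scalar $c(a)$ to be constant (compare $W(s_a)(s_b)$ with $W(s_b)(s_a)$), so the separate appeal to evenness of $c$ is not needed. Modulo supplying that one citation, the argument is complete.
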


\begin{proof}
We choose dual bases $(\del_1, \ldots, \del_g)$ and $(\omega_1, \ldots, \omega_g)$ of $H^1(C, \cO_C)\cong T_0(JC)$ and of $H^0(C, \omega_C)^{\vee}\cong  T_0(JC)^{\vee}$, respectively. Denoting by $(z_1, \ldots, z_g)$ the coordinates corresponding to the canonical flat structure on $JC$ giving the identification $T_0(JC)\cong \mathbb C^g$, we have  $$\mu(\del_i\cdot \del_j)=\del_i\theta\cdot \del_j \theta=\frac{\partial \theta}{\partial z_i}\cdot \frac{\partial \theta}{\partial z_j}\in H^0\bigl(\Theta, \cO_{\Theta}(2\Theta)\bigr).$$

Given a point $a\in \Theta$, Wirtinger Duality sends the divisor $\Theta_a + \Theta_{-a}$ to the hyperplane of divisors of $|2\Theta_{|\Theta}|$ which pass through $a$. The transpose map $^t\mu$ of the map $\mu$ introduced in (\ref{eq:map_mu})  maps this onto the hyperplane $H_a$ consisting of elements $\sum_{i\leq j} c_{ij} \del_i\cdot \del_j\in \mbox{Sym}^2 T_0(JC)$, such that
$$\sum_{i,j=1}^g c_{ij} \frac{\partial \theta}{\partial z_i}(a)\cdot \frac{\partial \theta}{\partial z_j}(a)=0.$$

The equation of the quadric tangent cone at the origin  to the divisor $\Theta_a + \Theta_{-a}$ equals  $\bigl(\sum_{i=1}^g \del_i \theta(a)\cdot \omega_i)^2\in \mbox{Sym}^2 H^0(C, \omega_C)$. This takes the value $\sum c_{ij} \del_i\theta (a)\cdot  \del_j \theta(a)$ on $\sum c_{ij} \del_i\cdot \del_j$, hence defines the hyperplane $H_a$, which proves the commutativity of the diagram.
\end{proof}

Keeping the notation of Lemma \ref{lemma:map_bdry3} and recalling that $(z_1, \ldots, z_g)$ are local coordinates on $T_0(JC)$ corresponding to a choice of basis of $H^1(C, \cO_C)$, we consider the Taylor expansion at the origin
$$
\theta=\theta_0+\theta_2+\theta_4+\cdots
$$
of the (even)   section $\theta=\theta(z_1, \ldots, z_g)$ of $\cO_{JC}(\Theta)$. Therefore,  $\theta_0=\theta(0)$,\ $\theta_2\in \mbox{Sym}^2 H^0(C,\omega_C)$ and $\theta_4\in  \mbox{Sym}^4 H^0(C, \omega_C)$, respectively. We have the following fundamental result:

\begin{theorem}\label{thm:thet_quartic}
The composition map
\[
\beta \colon \mathrm{Sym}^2 H^1(C, \cO_C ) \stackrel{\mu}{\lra} H^0\bigl(\Theta, \cO_{\Theta } (2\Theta )) \stackrel{\widetilde{\mathfrak{w}}}{\lra} H^0\bigl(\Theta, \cO_{\Theta}(2\Theta)\bigr)^{\vee} \stackrel{^t\mu}{\lra} \mathrm{Sym}^2 H^1(C, \cO_{JC})^{\vee}
\]
is fully symmetric, that is, it is induced by a quartic polynomial $F(C,\eta)$. Furthermore, this quartic equals
\[
F(C, \eta)= \frac{1}{2}\theta_2^2 -\theta_0\cdot \theta_4 \in \mathrm{Sym}^4 H^0(C, \omega_C).
\]
\end{theorem}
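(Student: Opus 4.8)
The plan is to compute $\beta$ in the flat coordinates $(z_1,\dots,z_g)$ on $T_0(JC)$ dual to the chosen basis $(\partial_1,\dots,\partial_g)$ of $H^1(C,\cO_C)$, reducing everything to a Taylor expansion at the origin. The first step is to remove Wirtinger duality. Since $\widetilde{\mathfrak w}=\mathfrak w\circ\mathrm{res}^{-1}$ and the diagram of Lemma~\ref{lemma:map_bdry3} gives ${}^t\mu\circ\mathfrak w=\varphi^{*}$ on $|2\Theta|_0$, the composite collapses to
\[
\beta={}^t\mu\circ\widetilde{\mathfrak w}\circ\mu=\varphi^{*}\circ\mathrm{res}^{-1}\circ\mu .
\]
Thus it suffices to follow a generator $\partial_i\cdot\partial_j$ of $\mathrm{Sym}^2H^1(C,\cO_C)$: apply $\mu$, lift the resulting section of $\cO_{\Theta}(2\Theta)$ to a second order theta function on $JC$ vanishing at the origin via $\mathrm{res}^{-1}$, and finally read off its quadratic tangent cone via $\varphi^{*}$.

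Recall that $\mu(\partial_i\cdot\partial_j)=(\partial_i\theta\cdot\partial_j\theta)_{|\Theta}$. I would exhibit the lift explicitly as
\[
\widetilde s_{ij}:=\partial_i\theta\cdot\partial_j\theta-\theta\cdot\partial_i\partial_j\theta+\frac{\partial_i\partial_j\theta(0)}{\theta_0}\,\theta^2 .
\]
The step I expect to be the main obstacle is verifying that $\partial_i\theta\,\partial_j\theta-\theta\,\partial_i\partial_j\theta$ is a genuine global section of $\cO_{JC}(2\Theta)$. This is a quasi-periodicity computation: the automorphy factor of $\theta$ is the exponential of an affine-linear function of $z$, so its logarithmic $z$-derivatives are constants, and in the transformation law of $\partial_i\theta\,\partial_j\theta-\theta\,\partial_i\partial_j\theta$ all terms involving these constants cancel, leaving exactly the square of the automorphy factor of $\cO_{JC}(\Theta)$. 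Granting this, $\theta$ even forces $\partial_i\theta(0)=0$, so $\widetilde s_{ij}$ restricts on $\Theta$ to $(\partial_i\theta\,\partial_j\theta)_{|\Theta}=\mu(\partial_i\cdot\partial_j)$, and by the choice of the last coefficient it vanishes at the origin; since $\ker(\mathrm{res})=\mathbb C\cdot\theta^2$, this pins down $\widetilde s_{ij}=\mathrm{res}^{-1}(\mu(\partial_i\cdot\partial_j))$. I would stress that forcing $\widetilde s_{ij}(0)=0$ is indispensable: it is the analytic shadow of Wirtinger duality landing in $|2\Theta|_0$, and dropping the $\theta^2$ correction destroys the symmetry obtained below.

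Because $\widetilde s_{ij}$ is even and vanishes at $0$, its lowest Taylor term is quadratic, so by \cite[4.5]{welters86} the map $\varphi^{*}$ returns precisely that quadratic term. Writing $\theta=\theta_0+\theta_2+\theta_4+\cdots$, using $\partial_i\theta=\partial_i\theta_2+O(z^3)$ and $\partial_i\partial_j\theta_2=\partial_i\partial_j\theta(0)=:H_{ij}$, a direct expansion yields
\[
\varphi^{*}\widetilde s_{ij}=\partial_i\theta_2\cdot\partial_j\theta_2+H_{ij}\,\theta_2-\theta_0\cdot\partial_i\partial_j\theta_4\in\mathrm{Sym}^2H^0(C,\omega_C).
\]
I then recognize the right-hand side as $\partial_i\partial_j F$ for $F=\tfrac12\theta_2^2-\theta_0\theta_4$, since $\partial_i\partial_j(\tfrac12\theta_2^2)=\partial_i\theta_2\,\partial_j\theta_2+H_{ij}\theta_2$. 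As $\beta(\partial_i\cdot\partial_j)=\partial_i\partial_j F$ is the Hessian of a single quartic, the associated four-tensor $\partial_i\partial_j\partial_k\partial_l F$ is automatically totally symmetric — this is exactly the statement that $\beta$ is fully symmetric and is induced by $F$ — and $F=\tfrac12\theta_2^2-\theta_0\theta_4$ is the claimed formula.

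As an internal check of symmetry one computes $\partial_k\partial_l(\partial_i\theta_2\,\partial_j\theta_2+H_{ij}\theta_2)=H_{ik}H_{jl}+H_{il}H_{jk}+H_{ij}H_{kl}$, the symmetric sum over the three pairings of $\{i,j,k,l\}$, while $\theta_0\,\partial_i\partial_j\partial_k\partial_l\theta_4$ is symmetric by construction; one also sees immediately that omitting the $\theta^2$ correction would replace $+H_{ij}\theta_2$ by $-H_{ij}\theta_2$ and break this symmetry, confirming the role of the normalization. The only remaining bookkeeping is the scalar indeterminacy of $\mathfrak w$, harmless since the Scorza quartic is defined only up to a constant, together with the compatibility of the identifications $H^1(C,\cO_C)\cong T_0(JC)$ and $H^0(C,\omega_C)\cong T_0(JC)^{\vee}$, which guarantees that ``quadratic part in $z$'' is literally an element of $\mathrm{Sym}^2H^0(C,\omega_C)$.
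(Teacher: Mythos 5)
Your argument is correct and follows essentially the same route as the paper: both reduce via Lemma \ref{lemma:map_bdry3} to computing $\varphi^{*}\circ\mathrm{res}^{-1}\circ\mu$, both use the identical lift $\partial_i\theta\cdot\partial_j\theta-\theta\cdot\partial_i\partial_j\theta+\frac{\partial_i\partial_j\theta(0)}{\theta_0}\theta^2$ of $\mu(\partial_i\cdot\partial_j)$ to $H^0(JC,2\Theta)_0$, and both read off $F=\tfrac12\theta_2^2-\theta_0\theta_4$ from the Taylor expansion at the origin. The only (harmless) divergence is in certifying that $\partial_i\theta\cdot\partial_j\theta-\theta\cdot\partial_i\partial_j\theta$ is a genuine second order theta function: the paper realizes it as $-\tfrac12\,\partial_{v_1}\partial_{v_2}\bigl(\theta(u+v)\theta(u-v)\bigr)\big|_{v=0}$ and invokes \cite{BD2, Iz}, whereas you verify the quasi-periodicity by a direct automorphy-factor computation; both are valid.
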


\begin{proof}
First observe that, via Lemma \ref{lemma:map_bdry3}, the map $\beta$ defined above coincides with the map in the statement of Theorem \ref{thm:scorza_quartic}.

\vskip 4pt

Let $F\in \mbox{Sym}^2 \bigl(\mbox{Sym}^2 H^0(C, \omega_C)\bigr)$ be the multilinear form representing $\beta$. We evaluate $F$ on a four-tuple of vectors $(v_1, v_2, v_3, v_4)$ from $T_0(JC)$.

The image of $(v_1, v_2)$ by the map $\mu$ is the section $\del_{v_1}\theta\cdot \del_{v_2}\theta\in H^0\bigl(\Theta, \cO_{\Theta}(2\Theta)\bigr)$, where $\del_{v_i}\theta\in H^0\bigl(\Theta, \cO_{\Theta}(\Theta)\bigr)$. We need to identify the element of $|2\Theta|_0$ whose restriction to the divisor $\Theta$ is the divisor of zeros of $\del_{v_1} \theta\cdot  \del_{v_2} \theta$.

\vskip 4pt

For a fixed $v\in T_0(JC)$, we denote by $\theta_v$ the function on $JC$ given by $u\mapsto \theta(u+v)$, Consider the section $G(u,v):=\theta (u+v)\theta (u-v)$ on $JC\times JC$, where $u,v\in T_0(JC)\cong \mathbb C^g$. For any fixed $v\in T_0(JC)$, this is a second order theta function on $JC$, whose zero locus is the divisor $\Theta_v + \Theta_{-v}\in |2\Theta|$. We denote by $\theta_{-v}\cdot \del_{v_1} \theta_v - \theta_v\cdot \del_{v_1} \theta_{-v}$ the partial derivative of $\theta (u+v)\theta (u-v)$ with respect to the variable $v$ in the direction of $v_1$. As a function of $u$, this is still a second order theta function on $JC$, see \cite{BD2} or \cite{Iz}. The second derivative
\[
\del_{v_1}\del_{v_2}\bigl(G\bigr)= \theta_{-v}\cdot \del_{v_1}\del_{v_2} \theta_v - \del_{v_1}\theta_v\cdot \del_{v_2} \theta_{-v} - \del_{v_1}\theta_{-v}\cdot \del_{v_2} \theta_v + \theta_v\cdot \del_{v_1}\del_{v_2} \theta_{-v}
\]
is also a second order theta function. At $v=0$, this second derivative, viewed as a function of $u$, is equal to
\[
\del_{v_1}\del_{v_2}\bigl(G\bigr)(u,0) = 2\theta\cdot \del_{v_1}\del_{v_2}\theta  -2\del_{v_1} \theta\cdot \del_{v_2} \theta,
\]
and, when restricted to $\Theta$, equals the section $-2\del_{v_1} \theta \cdot \del_{v_2} \theta\in H^0\bigl(\Theta, \cO_\Theta(2\Theta)\bigr)$. To obtain a section of $H^0(JC, 2\Theta)_0$ which restricts to $\del_{v_1} \theta\cdot \del_{v_2} \theta$, we  add the multiple $a\cdot \theta^2$ to $-\frac{1}{2} \del_{v_1}\del_{v_2}(G)(u,0)$, where $a=\frac{1}{\theta_0}\cdot \del_{v_1}\del_{v_2}\theta(0)$.

\vskip 4pt

To evaluate this element under $\phi^*$, we compute the equation of the quadric tangent at $0$ to the divisor of $-\frac{1}{2} \del_{v_1}\del_{v_2}(G)(u,0) + a\cdot \theta^2$. Choosing elements $v_3, v_4\in T_0(JC)$, we find that

$$\phi^*\bigl(a\cdot \theta^2\bigr)(v_3,v_4)=2a\theta_0\cdot \del_{v_3}\del_{v_4}\theta=2\del_{v_1}\del_{v_2}\theta\cdot \del_{v_3}\del_{v_4}\theta,$$
and
\begin{align*}
\phi^*\Bigl(-\frac{1}{2}\del_{v_1}\del_{v_2}G(u,0)\Bigr)(v_3,v_4)=-\theta_0\cdot \del_{v_1}\del_{v_2}\del_{v_3}\del_{v_4}\theta+\del_{v_1}\del_{v_3}\theta \cdot \del_{v_2}\del_{v_4}\theta+\\
\del_{v_1}\del_{v_4}\theta \cdot \del_{v_2}\del_{v_3}\theta- \del_{v_3}\del_{v_4}\theta\cdot \del_{v_1}\del_{v_2}\theta.
\end{align*} Combining these, we find

\begin{align*}
\beta \bigl(v_1, v_2, v_3, v_4\bigr) = \phi^*\Bigl(-\frac{1}{2}\del_{v_1}\del_{v_2}G(u,0)+a\cdot \theta^2\Bigr)(v_3, v_4)
= \del_{v_1}\del_{v_3} \theta \cdot \del_{v_2}\del_{v_4} \theta + \\ \del_{v_1}\del_{v_4} \theta \cdot \del_{v_2} \del_{v_3} \theta + \del_{v_3} \del_{v_4} \theta\cdot  \del_{v_1} \del_{v_2} \theta - \theta_0\cdot \del_{v_1} \del_{v_2} \del_{v_3} \del_{v_4} \theta =
 \del_{v_1} \del_{v_2} \del_{v_3} \del_{v_4} \left(\frac{1}{2} \theta_2^2 - \theta_0\cdot \theta_4 \right),
\end{align*}
which finishes the proof.
\end{proof}

\vskip 4pt

It remains to show that the quartic form $F(C, \eta)$ of Theorem \ref{thm:thet_quartic} is precisely the Scorza quartic considered in \cite{DK}. Recalling that $\phi_C\colon C\hookrightarrow \PP H^0(C, \omega_C)^{\vee}$ is the canonical embedding of $C$, for a point
$x\in C$, let $\phi_C(x)\in H^0 (\omega_C)^{\vee}\cong T_0(JC)$ be the image corresponding to evaluating canonical forms on $C$ at the point $x$.  Let $\del_x \Theta\in H^0\bigl(\Theta, \cO_{\Theta}(\Theta)\bigr)$ be the corresponding section under the isomorphism $H^1(C, \cO_C)\stackrel{\cong}\rightarrow H^0\bigl(\Theta, \cO_{\Theta}(\Theta)\bigr)$.

\vskip 4pt

We consider the Abel-Jacobi map
\[
\rho \colon C^{(g-1)} \lra \Theta,  \ \ \ D\mapsto \rho(D):=\eta^{\vee}(D).
\]
It is well-known \cite{Kempf73} that $\rho$ is a rational resolution of singularities. Having fixed the point $x\in C$, we introduce the following effective divisors on $C^{(g-1)}$

$$
C^{(g-2)}_x:=\Bigl\{D'+x\in C^{(g-1)}: D'\in C^{(g-2)}\Bigr\}, \\ \quad E_x := \Bigl\{D\in C^{(g-1)}: h^0\bigl(C, \omega_C(-x-D)\bigr)>0\Bigr\}.
$$
It is easily seen that for any distinct points $x, y\in C$,
$$
\rho^{*}\bigl(\Theta_{x-y}\cdot \Theta)= C^{(g-1)}_x + E_y.
$$
Indeed, if $D\in \rho^*(\Theta_{x-y}\cdot \Theta)$, then $H^0(C, D(y-x))\neq 0$, which implies that either $D-x$ is an effective divisor, that is, $D\in C^{(g-2)}_x$, or else $h^0\bigl(C, \cO(D+y)\bigr)\geq 2$, or equivalently $H^0\bigl(C, \omega_C(-x-D)\bigr)\neq 0$, in which case $D\in E_x$. Furthermore, this set-theoretic equality is in fact an equality of divisors on $C^{(g-1)}$, see also \cite[p.6]{welters86}. Furthermore, one also has the equality
$$\rho^{*}\bigl(\partial_x \Theta\bigr) = C^{(g-2)}_x + E_x,$$
which follows from previous considerations after regarding $\partial_x \Theta$ as the limit of the divisor $\Theta_{x-y}\cdot \Theta$ on $\Theta$, when the points $x$ and $y$ coalesce. We summarize this discussion as follows:

\begin{lemma}\label{lemtxty}
Suppose $x, y\in C$ are distinct points. Then we have the following equality of divisors on $\Theta$
\[
\bigl(\Theta_{x-y} + \Theta_{y-x}\bigr) \cdot \Theta = \del_x \Theta + \del_y \Theta.
\]
\end{lemma}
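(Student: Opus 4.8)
The plan is to establish the stated equality of divisors on $\Theta$ by pulling everything back to the symmetric product $C^{(g-1)}$ via the Abel-Jacobi resolution $\rho\colon C^{(g-1)}\to \Theta$, where all the divisors in question decompose into recognizable pieces. Since $\rho$ is a rational resolution of singularities (after Kempf), it is birational, so an equality of effective divisors on $C^{(g-1)}$ descends to the corresponding equality on $\Theta$; this reduces the problem to a computation upstairs that is purely about vanishing of cohomology of twisted divisors.

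First I would record the two pullback formulas already derived in the discussion preceding the statement. For distinct points $x,y\in C$ one has
\[
\rho^{*}\bigl((\Theta_{x-y}+\Theta_{y-x})\cdot \Theta\bigr)=\rho^{*}(\Theta_{x-y}\cdot \Theta)+\rho^{*}(\Theta_{y-x}\cdot \Theta),
\]
and each summand was identified as
\[
\rho^{*}(\Theta_{x-y}\cdot \Theta)=C^{(g-2)}_{x}+E_{y},\qquad \rho^{*}(\Theta_{y-x}\cdot \Theta)=C^{(g-2)}_{y}+E_{x}.
\]
On the other side, the divisor $\partial_x\Theta+\partial_y\Theta$ pulls back, again by the displayed computation $\rho^{*}(\partial_x\Theta)=C^{(g-2)}_x+E_x$, to
\[
\rho^{*}(\partial_x\Theta+\partial_y\Theta)=C^{(g-2)}_{x}+E_{x}+C^{(g-2)}_{y}+E_{y}.
\]
Comparing the two expressions shows they are literally the same effective divisor on $C^{(g-1)}$, namely $C^{(g-2)}_x+C^{(g-2)}_y+E_x+E_y$, and since $\rho$ is birational the equality descends to $\Theta$, giving the claim.

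The step requiring genuine care is justifying that each set-theoretic identity such as $\rho^{*}(\Theta_{x-y}\cdot \Theta)=C^{(g-2)}_x+E_y$ holds as an equality of \emph{divisors} and not merely of underlying sets, i.e.\ that the multiplicities match and no extra components appear. The excerpt already asserts this (citing \cite{welters86}), so I would invoke that: the point is that $\Theta$ has a translation-invariant smooth locus away from $\mathrm{Sing}(\Theta)$, the resolution $\rho$ is an isomorphism there, and the scheme-theoretic preimage of the reduced divisor $\Theta_{x-y}\cap\Theta$ is reduced with exactly the two prescribed components each appearing with multiplicity one. The interpretation of $\partial_x\Theta$ as the flat limit of $\Theta_{x-y}\cdot\Theta$ as $y\to x$ then forces its pullback to be the limit $C^{(g-2)}_x+E_x$ of the pullbacks, with the component $C^{(g-2)}_y$ specializing to $C^{(g-2)}_x$ and $E_y$ to $E_x$. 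With these multiplicity statements granted, the remaining arithmetic is the trivial matching of the two four-term sums displayed above, so the genuine obstacle is entirely the transversality/reducedness bookkeeping of the resolution rather than any global cohomological input.
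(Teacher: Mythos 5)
Your proposal is correct and follows essentially the same route as the paper: the paper's own "proof" is exactly the preceding discussion identifying $\rho^{*}(\Theta_{x-y}\cdot\Theta)=C^{(g-2)}_x+E_y$ and $\rho^{*}(\partial_x\Theta)=C^{(g-2)}_x+E_x$ on the Kempf resolution $C^{(g-1)}\to\Theta$, after which the lemma is the observation that the two four-term sums coincide. Your added care about reducedness/multiplicities and the descent along the birational map $\rho$ is consistent with (and slightly more explicit than) what the paper records.
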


In what follows, we complete the proof of Theorem \ref{thm:scorza_quartic}. We keep the notation from Theorem \ref{thm:thet_quartic}.

\vskip 5pt

\noindent \emph{Proof of Theorem \ref{thm:scorza_quartic}.}
Let $\beta\colon \mathrm{Sym}^2 H^0(C,\omega_C)^{\vee}\longrightarrow \mathrm{Sym}^2 H^0(C, \omega_C)$ be as before and fix a point $(x,y)\in S(C, \eta)$. Since $h^0(C,\eta)=0$, the linear systems $|\eta(x-y)|$ and $|\eta(y-x)|$ each contain exactly one effective divisor and the sum of these two divisors is a canonical divisor $D_{x,y}=\mbox{div}\bigl(\eta(x-y)\bigr)+\mbox{div}\bigl(\eta(y-x)\bigr)$. We choose a canonical form $h_{x,y}\in H^0(C,\omega_C)$ such
that $\mbox{div}(h_{x,y})=D_{x,y}$. We shall show that up to a non-zero constant, one has
$$\beta\bigl(\phi_C(x), \phi_C(y)\bigr)=h_{x,y}^2 \in \mbox{Sym}^2 H^0(C, \omega_C).$$
This will imply that the quartic $F(C, \eta) \in \mbox{Sym}^4 H^0(C, \omega_C)$ is indeed the Scorza quartic considered in \cite{DK}.

By definition, one has that $\mu\bigl(\phi_C(x), \phi_C(y)\bigr)=\partial_x\Theta\cdot \partial_y\Theta\in H^0\bigl(\Theta, \cO_{\Theta}(2\Theta)\bigr)$. We apply Lemma \ref{lemtxty} and obtain
$$\mbox{div } \mu\bigl(\phi_C(x)\cdot \phi_C(y)\bigr)=\mbox{div}\bigl(\Theta\cdot \Theta_{x-y}+\Theta\cdot \Theta_{y-x}\bigr).$$
It follows that $$\mbox{div}\bigl(\mathrm{res}^{-1}\circ \mu\bigr)\bigl(\phi_C(x)\cdot \phi_C(y)\bigr)=\Theta_{x-y}+\Theta_{y-x}\in |2\Theta|_0.$$

\vskip 3pt
Since $(x,y) \in S( C, \eta )$, the divisors $\Theta_{x-y}$ and $\Theta_{y-x}$ both contain the origin $0\in JC$.  Furthermore, the divisors $\Theta_{x-y}$ and $\Theta_{y-x}$ have the same projectivized tangent space at $0$, namely, the span of the canonical divisor $D_{x,y}$. Hence the tangent cone at $0$ to $\Theta_{x-y} + \Theta_{y-x}$  is the quadric of rank one $q_{x,y} = 2\langle D_{x,y} \rangle$. So, up to a non-zero constant $\beta\bigl(\phi_C(x) \cdot \phi_C(y)\bigr)=h_{x,y}^2$, for all $(x, y)\in S(C ,\eta)$.
\hfill $\Box$

\subsection{The Scorza quartic hypersurfaces for vanishing theta-nulls}

We now observe that Theorem \ref{thm:thet_quartic} allows us to define the limit of the Scorza quartic $F(C,\eta)$ for a general point of the divisor $\Theta_{\mathrm{null}}$. We pick a general point $[C,\eta]\in \Theta_{\mathrm{null}}$, so that $h^0(C,\eta)=2$. In this case the theta function $\theta$ vanishes at the origin, hence $\theta_0=0$ and, from Theorem \ref{thm:thet_quartic}, we expect that $F(C,\eta)=\frac{1}{2}\theta_2^2$. In fact we can make this expectation precise and argue that the map $\beta$ in Theorem \ref{thm:thet_quartic}  can still be defined in this case.

\vskip 3pt

First observe that the map $\mu\colon \mbox{Sym}^2 H^0(C, \cO_C)\longrightarrow H^0\bigl(\Theta, \cO_{\Theta}(2\Theta)\bigr)$ defined by (\ref{eq:map_mu}) remains unchanged, and sends an element $\sum c_{ij}\del_i\del_j \in \mbox{Sym}^2 H^1 (C,\cO_C )$ to $\sum c_{ij}\del_i\theta \cdot \del_j \theta$. The map $\mbox{res}\colon H^0(JC, 2\Theta)_0\rightarrow H^0\bigl(\Theta, \cO_{\Theta}(2\Theta)\bigr)$ has a one-dimensional kernel and therefore is no longer surjective. However, $\mbox{Im}(\mbox{res})\subseteq \mbox{Im}(\mu)$,  therefore
$\sum c_{ij}\partial_i(\theta)\cdot \partial_j(\theta)$ is the restriction of a pencil of divisors in $|2\Theta|_0$ which contains the element $2\Theta$.  Since $\mbox{mult}_0(2\Theta)=4$, for each of these pencils, the map sending a divisor to its quadric tangent cone at $0$ is either undefined or constant. Since $h^0(C, \eta)= 2$, for $x,y \in C$, the quadric tangent cone at $0$ to $\del_x\Theta + \del_y\Theta$ is again $q_{x,y}$. Therefore, on a non-empty open subset of pencils, the quadric cone map is well-defined and constant and the map
\[
\beta \colon \PP\bigl(\mbox{Sym}^2 H^1(C, \cO_C)\bigr)\dashrightarrow \PP\bigl(\mbox{Sym}^2 H^0(C,\omega_C)\bigr)
\]
is well-defined as a rational map. For a general pair $(x,y)\in C$, the divisor $D_{x,y}$ is the sum of the two divisors of the pencil $|\eta |$ containing $x$ and $y$. Hence the symmetric quadrilinear form associated to the Scorza quartic (that is, to the map $\beta$) vanishes on all quadruples of the form $(x,y, z_1, z_2)$, where $z_1, z_2$ are arbitrary points of $\langle D_{x,y} \rangle$. From this, it follows:

\begin{theorem}\label{thm:limit_scorza}
The limiting Scorza quartic $F(C,\eta)\in \mathrm{Sym}^4 H^0(C,\omega_C)$ of a general point $[C, \eta]\in \Theta_{\mathrm{null}}$ is well-defined and equals twice the quadratic tangent cone to $\Theta$ at $0$.
\end{theorem}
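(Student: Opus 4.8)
Since $[C,\eta]$ is a general vanishing theta-null we have $h^{0}(C,\eta)=2$, so the origin is a double point of $\Theta$; the even function $\theta$ then has vanishing constant and linear terms, $\theta=\theta_{2}+\theta_{4}+\cdots$, whence $\theta_{0}=0$ and $d\theta(0)=0$. The plan is to run the three maps $\mu$, $\widetilde{\mathfrak{w}}=\mathfrak{w}\circ\mathrm{res}^{-1}$, ${}^{t}\mu$ of Theorem \ref{thm:thet_quartic} in this degenerate situation, first checking that their composite $\beta$ survives as a rational map, and then reading off the quartic by specializing the formula of Theorem \ref{thm:scorza_quartic2}.

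For well-definedness I would argue as follows. The map $\mu$ of (\ref{eq:map_mu}) is unaffected. The only map that degenerates is $\mathrm{res}$ of (\ref{eq:res1}): now $2\Theta$ passes through the origin, so $\theta^{2}\in H^{0}(JC,2\Theta)_{0}$ and $\ker(\mathrm{res})=\langle\theta^{2}\rangle$ is one-dimensional. I would first show that every element of $\mathrm{Im}(\mu)$ still lifts to $H^{0}(JC,2\Theta)_{0}$: reusing the identity from the proof of Theorem \ref{thm:thet_quartic}, with $G(u,v)=\theta(u+v)\theta(u-v)$ one has $-\tfrac{1}{2}\partial_{v_{1}}\partial_{v_{2}}G(u,0)=\partial_{v_{1}}\theta\cdot\partial_{v_{2}}\theta-\theta\cdot\partial_{v_{1}}\partial_{v_{2}}\theta$, a section of $2\Theta$ whose restriction to $\Theta$ is $\mu(v_{1}\cdot v_{2})$ and whose value at the origin is $\partial_{v_{1}}\theta(0)\cdot\partial_{v_{2}}\theta(0)-\theta_{0}\cdot\partial_{v_{1}}\partial_{v_{2}}\theta(0)=0$, because $\theta_{0}=0$ and $d\theta(0)=0$. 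Thus no multiple of $\theta^{2}$ need be added in order to enter $H^{0}(JC,2\Theta)_{0}$, and any two lifts differ by an element of $\langle\theta^{2}\rangle$. Since $\mathrm{mult}_{0}(2\Theta)=4$, the function $\theta^{2}=\theta_{2}^{2}+\cdots$ starts in degree four, so its quadratic tangent cone vanishes and $\varphi^{*}(\theta^{2})=0$; hence $\varphi^{*}$ is constant along each lifting pencil and $\beta$ is a well-defined rational map. This gives the first assertion.

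To identify $F(C,\eta)$ I would compare with the ineffective case. Feeding the lift above into $\varphi^{*}$ exactly as in Theorem \ref{thm:thet_quartic}, now with $\theta_{0}=0$, yields the four-linear expression $\partial_{v_{1}}\partial_{v_{3}}\theta\cdot\partial_{v_{2}}\partial_{v_{4}}\theta+\partial_{v_{1}}\partial_{v_{4}}\theta\cdot\partial_{v_{2}}\partial_{v_{3}}\theta-\partial_{v_{1}}\partial_{v_{2}}\theta\cdot\partial_{v_{3}}\partial_{v_{4}}\theta$ (all derivatives at the origin, i.e. values of the bilinear form $\theta_{2}$). In the smooth case the fully symmetric quartic $\tfrac{1}{2}\theta_{2}^{2}-\theta_{0}\theta_{4}$ arose only after adding $a\theta^{2}$ with $a=\theta_{2}(v_{1},v_{2})/\theta_{0}$, whose contribution $\varphi^{*}(a\theta^{2})=2\theta_{2}(v_{1},v_{2})\theta_{2}(v_{3},v_{4})$ stayed finite because $\varphi^{*}(\theta^{2})=2\theta_{0}\theta_{2}$; this is the term that formally degenerates into a $0\cdot\infty$ limit when $\theta_{0}=0$. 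I would resolve this by invoking that the Scorza quartic is pinned down by its second polars along the Scorza correspondence. By Theorem \ref{propthetanull2limit} the limiting correspondence is $\Gamma_{\eta}\cup\overline{\Delta}$, and for $x+y\in\Gamma_{\eta}$ the points $x,y$ lie on a common fibre $D$ of $|\eta|$; by Riemann--Kempf the tangent-cone quadric $\{\theta_{2}=0\}$ is swept out by the rulings $\langle D\rangle$, $D\in|\eta|$, so $\theta_{2}$ vanishes identically on each $\langle D\rangle$. Consequently $\theta_{2}(\phi_{C}(x),\phi_{C}(y))=0$ there, the missing symmetrizing term is invisible to the polar data, and $\beta$ reproduces on the limiting correspondence precisely the polars of $\tfrac{1}{2}\theta_{2}^{2}$ (the vanishing on $\langle D\rangle$ being the limiting form of the rank-one polar condition, cf. Lemma \ref{lemtxty}). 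As an independent check, specializing Theorem \ref{thm:scorza_quartic2} along a family $[C_{t},\eta_{t}]$ with $\theta_{0}(t)\to0$ gives $F(C_{t},\eta_{t})=\tfrac{1}{2}\theta_{2}(t)^{2}-\theta_{0}(t)\theta_{4}(t)\to\tfrac{1}{2}\theta_{2}^{2}$, since the theta coefficients vary holomorphically. Thus $F(C,\eta)=\tfrac{1}{2}\theta_{2}^{2}$, whose zero locus is the non-reduced quartic $2\{\theta_{2}=0\}$, that is, twice the quadratic tangent cone to $\Theta$ at the origin; this is the second assertion.

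The main obstacle is exactly the degeneration highlighted above: at $\theta_{0}=0$ the naive composite $\beta$ lands in $\mathrm{Sym}^{2}\bigl(\mathrm{Sym}^{2}H^{0}(C,\omega_{C})\bigr)$ and is a priori not a symmetric quartic, the term that restores full symmetry in Theorem \ref{thm:thet_quartic} having collapsed to a $0\cdot\infty$ indeterminacy. The crux is therefore to show that this term does not affect the polar data determining the Scorza quartic, which I would carry out using the ruling structure of the rank-four tangent-cone quadric together with the explicit stable limit of Theorem \ref{propthetanull2limit}, and to confirm, by semicontinuity from the ineffective case, that these polars still determine a unique quartic for a general $[C,\eta]\in\Theta_{\mathrm{null}}$.
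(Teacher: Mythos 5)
Your proof is correct and follows essentially the same route as the paper: well-definedness of $\beta$ because the ambiguity $\langle \theta^2\rangle$ in lifting $\partial_{v_1}\theta\cdot\partial_{v_2}\theta$ to $H^0(JC,2\Theta)_0$ is killed by $\varphi^*$ (as $\mathrm{mult}_0(2\Theta)=4$), followed by identification of the quartic through the rank-one polars along the limiting correspondence of Theorem \ref{propthetanull2limit}; your explicit accounting of the symmetry defect of the naive quadrilinear form $H_{13}H_{24}+H_{14}H_{23}-H_{12}H_{34}$ and the continuity check via Theorem \ref{thm:scorza_quartic2} make precise two points the paper leaves implicit. One correction: the tangent cone to $\Theta$ at the origin of a vanishing theta-null is a quadric of rank \emph{three}, not four, because the two Kempf rulings $|\eta|$ and $|\omega_C\otimes\eta^{-1}|$ coincide, and it is exactly this coincidence that makes $\partial_x\theta_2$ and $\partial_y\theta_2$ proportional for $x,y$ in a common divisor of $|\eta|$, hence makes the polar $\partial_x\theta_2\cdot\partial_y\theta_2$ the double hyperplane $h_{x,y}^2$ rather than merely a rank-two quadric.
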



\begin{thebibliography}{aaaaaa}
\bibitem[Be]{Be} A. Beauville, {\em{Prym varieties and the Schottky problem}}, Inventiones Math. \textbf{41} (1977), 149--196.
\bibitem[BD1]{BD1} A. Beauville and O. Debarre, {\em{Une relation entre deux approches du probl\`eme de Schottky}}, Inventiones Math. \textbf{86} (1986), 195--207.
\bibitem[BD2]{BD2} A. Beauville and O. Debarre, {\em{Sur les fonctions th\^eta du second ordre}}, Arithmetic of Complex Manifolds, Lecture Notes in Math. \textbf{1399} (1989), 27--39, Springer, Berlin.
\bibitem[BZvB]{BZvB} D. Ben-Zvi and I. Biswas, {\em{Theta functions and Szeg\H{o} kernels}}, International Math. Research Notices,  \textbf{24} (2003), 1305--1340.
%\bibitem[Bud]{Bud} A. Bud, {\em{The birational geometry of $\rr_{g,2}$ and Prym-canonical divisorial strata}}, Selecta Math.  Volume 30, article number 30, (2024).
\bibitem[CFM]{CFM} D. Chen, G. Farkas and I. Morrison, {\em{Effective divisors of moduli spaces of curves and abelian varieties}}, in: A Celebration of Alegbraic Geometry, Clay Math. Proceedings \textbf{18} (2013), 131--169.
\bibitem[Cor]{Cor} M. Cornalba, {\em{Moduli of curves and theta-characteristics}},
in: Lectures on Riemann surfaces (Trieste, 1987), 560--589.
\bibitem[CV]{CV} G. Codogni and F. Viviani, {\em{Moduli and periods of supersymmetric curves}}, Advances in Theoretical and Math. Phys. \textbf{23} (2019), 345--402.
\bibitem[D]{dolgachev12} I. Dolgachev, {\em{Classical Algebraic Geometry: A Modern View}}, Cambridge University Press, 2012.
\bibitem[DHP]{DHP} E. D'Hoker and D. Phong, {\em{Conformal scalar fields and chiral splitting
on super Riemann surfaces}}, Communications Math. Phys. \textbf{125} (1989), 469--513.
\bibitem[DK]{DK} I. Dolgachev and V. Kanev, {\em{Polar covariants of plane cubics and quartics}}, Advances in Math. \textbf{98} (1993), 216--301.
\bibitem[DW]{DW} R. Donagi and E. Witten, {\em{Super Atiyah classes and obstructions to splitting of supermoduli space}}, Pure and Applied Math. Quarterly  \textbf{9} (2013), 739--788.
\bibitem[EH]{EH} D. Eisenbud and J. Harris, {\em{Limit linear series: Basic theory}}, Inventiones Math. \textbf{85} (1986), 337--371.
\bibitem[F1]{F1} G. Farkas, {\em{Rational maps between moduli spaces
of curves and Gieseker-Petri divisors}}, Journal of
Algebraic Geometry \textbf{19} (2010), 243--284.
\bibitem[F2]{F2} G. Farkas, {\em{The birational type of the moduli space of even spin curves}},  Advances in Math. \textbf{223} (2010), 433--443.
\bibitem[FL]{FL} G. Farkas and K. Ludwig, {\em{The Kodaira dimension of the moduli
space of Prym varieties}}, Journal of the European Math. Society \textbf{12} (2010), 755--795.
\bibitem[FV1]{FV1} G. Farkas and A. Verra, {\em{Moduli of theta-characteristics via Nikulin surfaces}}, Math. Annalen \textbf{354} (2012), 465--496.
\bibitem[FV2]{FV} G. Farkas and A. Verra, {\em{The geometry of the moduli space of odd spin curves}}, Annals of Math. \textbf{180} (2014), 927--970.
\bibitem[Fay]{Fay} J. Fay, {\em{Theta functions on Riemann surfaces}}, Lecture Notes in Mathematics \textbf{352} (1973), Springer.
\bibitem[vdGK1]{vdGK1} G. van der Geer and A. Kouvidakis, {\em{The class of a Hurwitz divisor on the moduli space of curves of even genus}}, Asian Journal of Math. \textbf{16} (2012), 787--806.
\bibitem[vdGK2]{vdGK2} G. van der Geer and A. Kouvidakis, {\em{Rational correspondences between moduli spaces of curves defined by Hurwitz spaces}}, J. Pure Appl. Algebra \textbf{216} (2012), 876--893.
%\bibitem[Gr]{Gr} M. Green, {\em{Quadrics of rank four in the ideal of a canonical curve}}, Inventiones Math. \textbf{75} (1984), 85--104.
\bibitem[GS]{GS} S. Grushevsky and R. Salvati-Manni, {\em{The Scorza correspondence in genus $3$}}, Manuscripta Mathematica \textbf{141}  (2013)  111–-124.
\bibitem[HMo]{HMo} J. Harris and I. Morrison,  {\em{Moduli of curves}}, Graduate Texts in Mathematics \textbf{187}, Springer 1998.
\bibitem[HM]{HM} J. Harris and D. Mumford, {\em{On the Kodaira
dimension of $\mm_g$}}, Inventiones Math. \textbf{67} (1982), 23--88.
\bibitem[HS]{HS} N. Hawley and M. Schiffer, {\em{Half-order differentials on Riemann surfaces}}, Acta Math. \textbf{115} (1966), 199--236.
\bibitem[I1]{Iz} E. Izadi, {\em{Fonctions th\^eta du second ordre sur la jacobienne d'une courbe lisse}}, Math. Annalen \textbf{289}   (1991), 189--202.
\bibitem[I2]{I} E. Izadi, {\em{Deforming curves in Jacobians to non-Jacobians I: Curves in $C^{(2)}$}}, Geometriae Dedicata \textbf{116} (2005), 87--109.
%\bibitem[KK]{KK} C. Kalla and D. Korotkin, {\em Baker–Akhiezer spinor kernel and tau-functions on moduli spaces of meromorphic
%differentials}, Communications Math. Phys. \textbf{331} (2014), 1191--1235.
\bibitem[KKZ]{KKZ} A. Kokotov, D. Korotkin and P. Zograf, {\em{Isomonodromic tau function on the space of admissible covers}},
Advances in Math. \textbf{227} (2011), 586--600.
\bibitem[K]{Kempf73} G. Kempf, {\em On the geometry of a theorem of Riemann}, Annals of Math. \textbf{98} (1973), 178--185.
\bibitem[MHNS]{MHNS} G. Moore, J. Harris, P. Nelson and I. Singer, {\em Modular forms and the cosmological constant}, Physics Letters B \textbf{178} (1986), 167--173.
\bibitem[M1]{M3} D. Mumford, {\em Prym varieties I}, Contributions to Analysis, Ed. L.V. Ahlfors and I. Kra and B. Maskit and L. Niremberg, Academic Press (1974) 325--350.
\bibitem[M2]{M1} D. Mumford, {\em Stability of projective varieties}, L'Enseignement Math. \textbf{XXIII} (1977), 39--110.
\bibitem[M3]{M} D. Mumford, {\em{Tata lectures on Theta II}}, Progress in Mathematics \textbf{43} (1988), 1984.
\bibitem[Mu]{mukaiFano3} S. Mukai, {\em{Fano 3-folds}}, Complex projective geometry (Trieste, 1989/Bergen, 1989), 255--263, London Math. Soc. Lecture Note Series \textbf{179}, Cambridge Univ. Press, 1992.
\bibitem[Pol]{Pol} A. Polishchuk, {\em{Triple Massey products on curves, Fay's trisecant identity and tangents to the canonical embedding}},
Moscow Math. Journal \textbf{3} (2003), 105--121.
\bibitem [Sch]{Sch} F-O. Schreyer, {\em{Geometry and algebra of prime Fano $3$-folds of genus $12$}}, Compositio Math.
\textbf{127} (2001),  297–-319.
\bibitem[Sc1]{Sc1}  G. Scorza,  {\em{Sopra la teoria delle figure polari delle curve piane del 4 ordine}}, Annali di
Matematica Pura e Applicata \textbf{1} (1899), 155–-202.
\bibitem[Sc2]{Sc} G. Scorza, {\em{Sopra le curve canoniche di uno spazio lineaire quelunque e sopra
certi loro covarianti quartici}}, Atti Accad. Reale Sci. Torino \textbf{35} (1900), 765--773.
\bibitem[TZ]{TZ} H. Takagi and F. Zucconi, {\em{Spin curves and Scorza quartics}}, Mathematische Annalen \textbf{349} (2011), 623--645.
\bibitem[T1]{T1} M. Teixidor i Bigas, {\em{Half-canonical series on algebraic curves}}, Transactions of the American Math. Society \textbf{302} (1987), 99--115.
\bibitem[T2]{T2} M. Teixidor i Bigas, {\em The divisor of curves with a vanishing theta-null}, Compositio Math. \textbf{66} (1988), 15--22.
\bibitem[TZ]{TZ} M. Tuite and A. Zuevsky, {\em The Szeg\H{o} kernel of a sewn Riemann surface}, Communications Math. Phys. \textbf{306} (2011), 617--645.
\bibitem[We]{welters86} G. Welters, {\em The surface $C-C$ on Jacobi varieties and second order theta functions}, Acta Math. \textbf{157} (1986), 1--22.
\bibitem[Wi]{Wi} E. Witten, {\em{Notes on super Riemann surfaces and theor moduli}},  Pure and Applied Math. Quarterly  \textbf{15}  (2019),
57--211.

\end{thebibliography}
\end{document}